\newcommand{\showcomments}{yes}
\newcommand{\showrefcomments}{yes}
\renewcommand{\showrefcomments}{no}
\newsavebox{\commentbox}
\newtheorem{thm}{Theorem}[section]
\newtheorem{lem}[thm]{Lemma}
\newtheorem{prop}[thm]{Proposition}
\newtheorem{theorem}[thm]{Theorem}
\newtheorem{corollary}[thm]{Corollary}
\newtheorem{lemma}[thm]{Lemma}
\newtheorem{proposition}[thm]{Proposition}
 \newtheorem{question}[thm]{Question}
\theoremstyle{definition}
\newtheorem{defn}[thm]{Definition}
\newtheorem{definition}[thm]{Definition}
\theoremstyle{remark}
\newtheorem{rem}[thm]{Remark}
\newtheorem{warning}[thm]{Warning}
\newtheorem{notation}[thm]{Notation}
\newtheorem{remark}[thm]{Remark}
\newtheorem{construction}[thm]{Construction}
\newcommand{\dist}{\textup{\textsf{d}}}
\newcommand{\scname}[1]{\text{\sf #1}}
\newcommand{\area}{\scname{Area}}
\newcommand{\p}{\textup{\textsf{p}}}
\DeclareMathOperator{\diam}{\text{diam}}
\definecolor{dani}{rgb}{1,0.5,.3}
\definecolor{maca}{rgb}{0,0.6,1}
\definecolor{definition}{rgb}{0,0,0}
\definecolor{statement}{rgb}{0,0,0}
\definecolor{proof}{rgb}{0, 0, 0}
\definecolor{comment}{rgb}{0, 0, 0}
\newcommand{\curvature}{{\ensuremath{\kappa}}}
\newcommand{\field}[1]{\mathbb{#1}}
\newcommand{\integers}{\ensuremath{\field{Z}}}
\newcommand{\rationals}{\ensuremath{\field{Q}}}
\newcommand{\naturals}{\ensuremath{\field{N}}}
\newcommand{\reals}{\ensuremath{\field{R}}}
\newcommand{\euler}{\chi}
\DeclareMathOperator{\link}{link}
\DeclareMathOperator{\interior}{Int}
\DeclareMathOperator{\deficiency}{\textsf{def}}
\newcommand{\boundary}{\partial}
\newcommand{\neb}[1]{\mathcal N_{#1}}
\title{Linear isoperimetric functions for surfaces in hyperbolic groups}
\begin{document}

%    Information for first author
\author{Macarena Arenas}
%    Address of record for the research reported here
\address{Dept. of Math. \& Stats., McGill Univ., Montreal, QC, Canada H3A 0B9 \\ \textit{Current address:} DPMMS, Centre for Mathematical Sciences, Wilberforce Road, Cambridge, CB3 0WB, UK}
\email{mcr59@dpmms.cam.ac.uk}

%    Information for second author
\author{Daniel T. Wise}
\address{Dept. of Math. \& Stats., McGill Univ., Montreal, QC, Canada H3A 0B9}
\email{wise@math.mcgill.ca}

%    General info
\subjclass[2010]{Primary 20F67}

\maketitle

\begin{abstract}
We show that word-hyperbolic groups satisfy linear isoperimetric functions for all homotopy types of surface diagrams. This generalises the linear isoperimetric functions for disc and annular diagrams. 
\end{abstract}

%\tableofcontents

\section{Introduction}

One of the main characterisations of word-hyperbolic groups is that they are the groups that satisfy a
 linear isoperimetric inequality. That is, for a compact 2-complex $X$,
the hyperbolicity of $\pi_1X$ is equivalent to the existence of a linear isoperimetric function
for disc diagrams $D\rightarrow X$. This means that there is a constant $K$ 
such that if there exists a disc diagram $D\rightarrow X$, then there exists a disc diagram $D'\rightarrow X$ with
$\boundary_\p D'=\boundary_\p D$, and with $\area(D')\leq K|\boundary_\p D'|$.
It is likewise known that hyperbolic groups have a linear annular isoperimetric function.
The goal of this paper is to generalise the linear isoperimetric function to arbitrary surface diagrams. The ``$(g,n)$-isoperimetric function" for $X$ is the maximal area needed to fill in ``surface diagrams" with genus $g$ and $n$ boundary circles. 
 See Definition~\ref{def:isopfction}. 

\begin{thm}\label{thm:main intro}
Let $X$ be a compact cell complex with $\pi_1X$ torsion-free hyperbolic. 
For each $g$ and $n$, the $(g,n)$-isoperimetric function for $X$ is linear.
\end{thm}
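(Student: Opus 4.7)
\medskip

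\noindent\textbf{Proof plan.}

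\medskip

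My plan is to reduce the $(g,n)$-isoperimetric problem to the linear disc isoperimetric inequality for $\pi_1 X$ by using a standard polygonal presentation of $\Sigma_{g,n}$. Any $(g,n)$-surface diagram $D\to X$ with boundary loops $\gamma_1,\dots,\gamma_n$ arises, via edge identifications, from a disc diagram $\tilde D\to X$ filling the word
\[
w \;=\; [a_1,b_1]\cdots[a_g,b_g]\cdot z_1\gamma_1 z_1^{-1}\cdots z_n\gamma_n z_n^{-1},
\]
where the $a_i,b_i\in\pi_1 X$ are handle generators and the $z_j$ are connector arcs from a chosen basepoint to each boundary component. Since $\area(D)=\area(\tilde D)$ and $|w|=2\sum_i(|a_i|+|b_i|)+2\sum_j|z_j|+L$ with $L=\sum_j|\gamma_j|$, the linear disc isoperimetric inequality reduces the theorem to choosing witnesses $a_i,b_i,z_j\in\pi_1 X$ satisfying $w=1$ with $\sum_i(|a_i|+|b_i|)+\sum_j|z_j|=O(L)$.

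I would proceed by induction on $g$, with the planar base case $g=0$ handled by a secondary induction on $n$; the cases $n=1,2$ are the given disc and annular isoperimetric inequalities. For $g=0$ and $n\geq 3$, I would first replace each $\gamma_j$ by a shortest representative of its conjugacy class at the cost of annular fillings of area $O(L)$, then peel off one boundary component $\gamma_n$ by combining an annular diagram with a $(0,n-1)$-diagram along a short common loop, glued to recover a $(0,n)$-diagram of area $O(L)$. For $g\geq 1$, cutting $\Sigma_{g,n}$ along a non-separating simple closed curve reduces the diagram to a $(g-1,n+2)$-diagram whose two extra boundary circles both map to a common loop $\delta$ in $X$; applying the inductive hypothesis and gluing along $\delta$ then produces a $(g,n)$-diagram of area $O(L+|\delta|)$.

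The main obstacle is producing such a short handle loop $\delta$ (and short connector arcs in the $(0,3)$-pants case, which cannot be reduced to smaller planar cases by cutting alone). Algebraically, given the existence of witnesses $A_i,B_i,Z_j$ to the relation $w=1$ whose input data (the $\gamma_j$'s, with $\sum|\gamma_j|=L$) is short, one must exhibit \emph{new} witnesses of total length $O(L)$; a priori, the given witnesses could be arbitrarily long, so the argument must use the hyperbolicity of $\pi_1 X$ essentially rather than symbolic manipulation alone. I expect the technical heart of the proof to be a combinatorial surgery inside a minimal-area disc filling of $w$, using thin triangles in the Cayley complex of $\pi_1 X$ to shorten one handle generator (equivalently, to replace a non-separating curve in a witness diagram by a geodesic representative of its conjugacy class) at a controlled area cost, and iterating this across the induction on $g$. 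Making this surgery precise, iterable, and compatible with the planar reduction is the step I would devote the most attention to.
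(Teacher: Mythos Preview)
Your reduction to the polygonal disc is correct, and you have correctly located the crux: one must exhibit handle generators $a_i,b_i$ and connectors $z_j$ of total length $O(L)$. Unfortunately this is not a subproblem but essentially the theorem itself. A $(g,n)$-diagram of area $A$ has $O(A)$ edges (the $2$-cells of $X$ have bounded size), so any embedded cutting system in it has total length $O(A)$; conversely, short witnesses give a short polygonal boundary and hence a linear-area disc filling. Thus ``find short witnesses'' and ``find a linear-area surface filling'' are equivalent problems, and your induction does not reduce to anything strictly easier.

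Concretely: your step for $g\geq 1$ asks for a non-separating curve whose image in $X$ has short geodesic representative $\delta$. Replacing the curve by $\delta$ via an annulus is fine, but nothing in your argument bounds $|\delta|$; this is the translation length of a conjugacy class that depends on the \emph{choice} of filling map $S\to X$, and you give no mechanism for making a good choice. The same obstacle already appears in your irreducible planar base case $(g,n)=(0,3)$: a pair of pants cannot be cut to anything simpler, and producing short connectors $z_j$ from the disc and annular inequalities alone is exactly the difficulty. (Incidentally, your ``peel off $\gamma_n$'' move for $g=0$, $n\geq 3$ does not reduce $n$ as written: gluing a $(0,2)$-diagram to a $(0,n{-}1)$-diagram along one circle yields a $(0,n{-}1)$-diagram, not a $(0,n)$-diagram. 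The correct building block is a pair of pants, which brings you straight back to the unresolved $(0,3)$ case.)

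The paper sidesteps the short-witness problem entirely. Instead of cutting $S$ along closed curves, it decomposes the interior of $S$ into $2|\chi|$ \emph{ideal} triangles and straightens each side to a bi-infinite geodesic in $\widetilde X$. Slimness of ideal triangles (Lemma~\ref{lem:flatland}) uniformly bounds the central ``intriangles''; one then trims each cusp along a uniformly quasigeodesic ``$\partial$-return'' to obtain a compact surface $\Sigma$ with $|\partial\Sigma|=O(|\partial S|)$, and decomposes $\Sigma$ into a number of ``bands'' bounded purely in terms of $g,n$, each of controlled length and thickness. Applying the disc and annular inequalities to these pieces gives the area bound. The bookkeeping is more elaborate than your scheme, but it never needs to locate a short essential closed curve inside some unknown filling of $S$ --- which is precisely the step your proposal leaves open.
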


This is a slightly simplified version of the statement. The precise formulation of Theorem~\ref{thm:main intro} is given in Theorem~\ref{thm:main}, and only requires that boundary circles of surface diagrams do not represent non-trivial torsion elements.

A motivating case is Gromov's beautiful observation (\cite{Gromov87})  
that if $n$ closed local geodesics $C_i\rightarrow M$ in a compact negatively curved manifold $M$ form the boundary circles of a  surface $S \rightarrow M$,
then one can rechoose $S$ so that $\area(S)=2\pi(2g+n-2)$. 
 Gromov's argument generalises to a compact negatively curved space $X$ with $\pi$ replaced by an upper bound on the area of an ideal triangle in $\widetilde X$.
One would hope that Gromov's method of proof could be adapted seamlessly to arbitrary hyperbolic groups, but this is apparently not the case:  there are fundamental technical barriers to such a generalisation. Perhaps most salient among these is the fact that the sides of ideal triangles in a $\delta$-hyperbolic complex do not necessarily asymptotically converge - and from a combinatorial viewpoint, ideal triangles do not bound diagrams with finite area.

To prove Theorem~\ref{thm:main intro} in its full generality, we analyse a surface decomposition similar to that in Proposition~\ref{prop:gromovs trick},
but are then diverted into the issue of extracting an appropriate compact surface from this procedure, since the decomposition cannot produce a compact surface with boundary directly.
We ultimately explain how to use the decomposition to estimate the combinatorial area and arrive at our goal of a linear isoperimetric function for surface diagrams.

At this point we assume, as in Proposition~\ref{prop:gromovs trick} below, that the boundary circles of $S$ are essential in $X$, and moreover, represent conjugacy classes of elements having infinite order in $\pi_1X$.

\subsection{Gromov's trick}
In this section we recall Gromov's use of an ideal triangle decomposition to bound the area of a surface  \cite[p.235]{Gromov87} in a negatively curved Riemannian manifold. 
Thurston originally used such ideal triangle decompositions
of hyperbolic surfaces with boundary to build examples, and also glued ideal polyhedra together to form hyperbolic 3-manifolds
\cite{Thurston82}.  We state Gromov's result for genus~$0$ surfaces, as presented in \cite{Gromov87}, but note that the proof works whenever $\chi\leq 0$. 

\begin{prop}\label{prop:gromovs trick}
Let $M$ be a compact negatively curved manifold with boundary.
Let $S$ be a compact surface with $\boundary S=\sqcup_{i=1}^n C_i$ and $n\geq3$. 
There exists a constant $K$ with the following property:

Let $S\rightarrow M$ be a map such that each $C_i\rightarrow M$ is essential.
Then $S\rightarrow M$ can be homotoped to another mapped surface  $S'\rightarrow M$ such that:
\begin{enumerate}
\item  $\interior(S')$ is built from the union of $2(n-2)$ ideal triangles in $\widetilde M$,
\item $\area(S')\leq K(|\partial S| -\chi(S))$,
\item  $\boundary S' = \sqcup _{i=1}^n C_i'$
and each $C_i'\rightarrow M$ is a local geodesic.
\end{enumerate}
\end{prop}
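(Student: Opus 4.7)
The plan is to express $S'$ as a union of $2(n-2)$ ideal triangles in $\widetilde M$, one for each face of an ideal triangulation of $S$, and then invoke the fact that every ideal triangle in a pinched negatively curved Hadamard manifold has uniformly bounded area.

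First I would straighten the boundary: since each $C_i\rightarrow M$ is essential and $M$ is negatively curved, it is freely homotopic to a unique closed local geodesic $C_i'\rightarrow M$, and I would extend these to a homotopy of $S\rightarrow M$. Next, choose a topological ideal triangulation of $S$, meaning a decomposition into $2(n-2)$ triangular regions (more generally $-2\chi(S)=2(2g+n-2)$) whose vertices lie on $\partial S$ and whose edges are arcs connecting boundary components; such a triangulation exists by an Euler characteristic count.

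The key step is to straighten each edge of this triangulation. For an edge $e$ with endpoints on $C_i'$ and $C_j'$, lift $e$ to the universal cover; its endpoints lie on lifts $\widetilde C_i'$ and $\widetilde C_j'$, each a bi-infinite geodesic with two ideal vertices on $\partial\widetilde M$. Choose one ideal endpoint on each of $\widetilde C_i', \widetilde C_j'$, coherently around each boundary circle (all ideal choices at $C_i'$ lying on the same end of $\widetilde C_i'$), and replace the lifted $e$ by the unique bi-infinite geodesic joining the two chosen ideal points. This descends to $M$ as an arc spiraling into $C_i'$ and $C_j'$. Each face of the triangulation then lifts to an ideal triangle in $\widetilde M$ with three ideal vertices, and homotoping the face maps onto these ideal triangles produces $S'\rightarrow M$.

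Finally, each ideal triangle in $\widetilde M$ has area at most a constant $A$ depending only on the negative curvature bound (generalising the hyperbolic-plane bound of $\pi$), so $\area(\interior(S')) \leq 2(n-2)A \leq -2A\chi(S)$. A bounded contribution from a collar of $\partial S'$ in $S'$ then yields $\area(S')\leq K(|\partial S|-\chi(S))$. The main obstacle will be the coherent spiraling step, which requires organising the ideal-endpoint choices around each $C_i'$ so that the face maps fit together into a single well-defined $S'\rightarrow M$ with genuine geodesic boundary $\sqcup_i C_i'$, rather than merely spiraling accumulations.
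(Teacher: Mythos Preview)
Your proposal is correct and follows essentially the same strategy as the paper's sketch: straighten the boundary circles to closed geodesics, decompose the interior into $-2\chi(S)$ ideal triangles whose sides are asymptotic to lifts of the $C_i'$, and invoke the uniform area bound on ideal triangles in negative curvature. The only minor discrepancy is bookkeeping for the $|\partial S|$ term: in the paper this comes from the annular collars realising the homotopies $C_i\simeq C_i'$ (bounded via the annular isoperimetric inequality), not from a collar of $\partial S'$ inside $S'$ as you phrase it, but this does not affect the argument.
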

  
\begin{proof}[Sketch]

Build $S'$ in two steps: first homotope each $C_i$ in $\boundary S$ to the unique closed geodesic $C_i'$ in its homotopy class to obtain a ``collar" composed of cylinders which can be chosen so that the area of the collar is bounded above by a linear function of $|\partial S|$ using the isoperimetric inequality for annuli. 

We thus obtain a surface with geodesic boundary $\partial S'$, and this surface can be homotoped, keeping the boundary fixed, to a surface whose area is $\leq K |\chi(S')|$ by decomposing it into ideal triangles whose sides, in the universal cover, lift to lines that are asymptotic to geodesics covering boundary components. These geodesics project to geodesics in $M$, so $\partial S'$ can be filled in with $m$ ideal triangles, where the number $m$ depends on $\chi(S')=\chi(S)$.
\end{proof}

\begin{rem}
The explanation in Proposition~\ref{prop:gromovs trick}
functions perfectly well for a space that is negatively curved
in the sense that it is locally CAT($\kappa$) for some $\kappa<0$.
However, there are two important points to consider.
Firstly, there are foundational issues relating the area of the constructed surface 
to the combinatorial area of a diagram, which is what we shall pursue.
(See \cite{Bridson2002,BurilloTaback2002} for work relating
classical isoperimetric area to combinatorial area.)
Secondly, we are interested in providing a general linear isoperimetric function 
in the case where $M$ is nonpositively curved but $\pi_1M$ is word-hyperbolic,
and more generally, where $M$ does not even have a locally CAT(0) metric.
A substantial technical obstacle is that outside the negatively curved case,
there could be flat strips, and hence ideal triangles might not behave in a fashion allowing us to produce a compact surface.
\end{rem}

\subsection{Related results}

There is a stream of research pursuing a homological alternative to Theorem~\ref{thm:main intro}. 

Hyperbolic groups satisfy a \emph{weak}  (sometimes called \emph{homological}) linear isoperimetric inequality, in the sense that a $k$-cycle that bounds a $(k+1)$-chain bounds such a chain whose ``area" is linear on the ``length" of the $k$-cycle.  This notion was first discussed by Gersten in \cite{Gersten1996}. The existence of a weak linear isoperimetric inequality for hyperbolic groups was proven for $k=1$ in \cite{Gersten1998} and extended to all $k\geq1$ by Mineyev  \cite{Mineyev2000} (for homology with $\rationals$ and $\reals$ coefficients) and by Lang  \cite{Lang2000} (for $\integers$ coefficients). 
In the $k=1$ case, which is the case most closely related to this paper, the above result says that a collection of cellular loops that bounds an orientable surface, bounds such a surface (possibly with very large genus) whose area is linear on the length of the loops. 

Our result is stronger, since we show that given a genus~$g$ diagram with prescribed boundary circles, there exists a genus~$g$ diagram   having exactly those circles as boundary components --so, in particular, of the same homotopy type
\footnote{If the input surface diagram is compressible in $X$,  then the output will be a disconnected surface diagram. However, since $X$ is path-connected, it is possible to upgrade this surface diagram to a 2-complex that is homotopy equivalent to the original surface diagram. The caveat is that the resulting complex will \emph{not} be a surface diagram as per our definition.}
as the original diagram -- and whose area is linear on the length of the circles. Moreover, our methods have the added advantage of encompassing nonorientable surface diagrams as well.

Although the homological and ordinary isoperimetric functions are both linear for hyperbolic groups, they can be inequivalent for arbitrary finitely presented groups \cite{ABDY13}. A characterisation of relative hyperbolicity using a weak isoperimetric function was given in \cite{MPE16}.

In a very different direction, there are results controlling the areas of cylinders associated to simultaneous conjugations in hyperbolic groups \cite{BridsonHowie05}, \cite{BH13}. 

\subsection{Further directions}
 
The main Theorem requires that the boundary circles of the surface diagram map to conjugacy classes represented by infinite order elements of $\pi_1 X$. Our proof requires this hypothesis in order to create the ideal triangle decomposition. Nevertheless, we hope that there might be some variant construction supporting a generalisation:

\begin{question}
Does the statement of Theorem~\ref{thm:main intro} hold without the requirement that each $C_i$ maps to a conjugacy class of an infinite order element of $\pi_1 X$?
\end{question}

Our Theorem indicates that the classical, homological, and generalised isoperimetric functions are all equivalent for hyperbolic groups. This is not the case outside of the hyperbolic setting: already in the class of groups having quadratic classical isoperimetric functions, there are examples having unsolvable conjugacy problem. Such examples, in particular, cannot even satisfy a recursive annular isoperimetric function. This is a result of Olshanskii and Sapir~\cite{OS20} that negatively answers a question posed by Rips. 

In view of these results, it seems reasonable to ask:

\begin{question}
Let $G$ be a group in some favourite geometric group theory class, what are the $(g,n)$-isoperimetric functions for $G$?
\end{question}

Our proof provides a linear isoperimetric function $f_{gn}$ depending on the genus $g$ and the number $n$ of boundary circles.
It is natural to ask whether Theorem~\ref{thm:main intro} can be uniformised: 

\begin{question}\label{quest2}
Is there a ``global" linear isoperimetric function $F:\naturals \rightarrow [0, \infty)$ with $f_{gn}(m)\leq F(m)$ for all $m\in \naturals$?
\end{question}

In the special case of complexes that satisfy the strict weight test (Section~\ref{sec:special}), we obtain inequalities that only depend on $g$. This suggest that in more restricted combinatorial settings Question~\ref{quest2} may be more tractable. In contrast, Proposition~\ref{prop:gromovs trick} indicates that Question~\ref{quest2} will not have an affirmative answer in general.

\subsection{Structure of the paper}

In Section~\ref{sec:class} we define surface diagrams and generalised isoperimetric functions, state the classical isoperimetric inequalities for disc and annular diagrams, and review some well-known lemmas controlling the behaviour of geodesics and quasigeodesics in hyperbolic spaces.

In Section~\ref{sec:linso} we state the main Theorem and outline the structure of the proof, we then proceed to prove a generalisation of the slim triangle property, and describe a number of constructions which will change and simplify the surface diagram in various ways.

In Section~\ref{subsec:horizontalpt} we define ``jumps", which allow us to describe a graph of spaces decomposition associated to the surface diagram. We then define ``horizontal paths" and ``bands", and use the graph of spaces structure to understand the combinatorics of these objects. This  provides us with a way to decompose a surface diagrams into annular and disc diagrams, and hence obtain a small-area surface diagram by controlling the area of the pieces in this decomposition. 

In Section~\ref{sec:special} we give a more elementary proof of the Theorem for $2$-complexes satisfying a strong local negative curvature condition. We do not know if there is a way to generalise this to handle arbitrary hyperbolic groups.

Finally, in Section~\ref{sec:nono} we deal with nonorientable surface diagrams. Although it would be possible to unify the orientable and nonorientable cases, we felt that this would not provide any additional intuition or clarity regarding the methods involved, so we opted instead to present it separately.

\section{Classical statements}\label{sec:class}

We recall some definitions and  classical results that will be needed throughout the paper:

\subsection{Diagrams}

\begin{definition}\label{def:diagrams}
A \emph{surface diagram} $S$ is a compact combinatorial $2$-complex with an embedding $S \subset \bar S$ into a surface with punctures  such that $\bar S$ deformation retracts to $S$. If $\bar S$ has genus $g$, then $S$ is a \emph{genus g diagram}. Let $\partial \bar S = \cup^n_{i=1} \bar C_i$. We shall always assume that $n\geq 1$.  

The surface diagram has $n$ \emph{boundary paths} $P_1, \dots, P_n$  corresponding to attaching maps of 2-cells that could be added to $S$ to form a closed surface. More precisely, each $P_i$ is homotopic to a $\bar C_i \subset \bar S$.  The \emph{boundary} of $S$ is the union $\partial S=\cup^n_{i=1} C_i$ where
each $C_i$ is homeomorphic to a circle and each $P_i$ maps to $C_i$.
%\begin{com} what about the singular case??
%\end{com}
A genus~$g$ diagram is  \emph{singular} if it is not homeomorphic to a surface (e.g.\ $S$ might have cut-vertices).
%We emphasize that a genus~$g$ diagram is not necessarily connected, but 
The most frequently considered cases have 
 $g=0$, and $S$ connected and orientable: the case $n=1$  is a \emph{disc diagram} and the case $n=2$ is an \emph{annular diagram}. 
  %A \emph{Möbius diagram} is the special case where the $2$-complex deformation retracts to a Möbius strip. 

A \emph{genus~g diagram in a complex X} is a combinatorial map $S \rightarrow X$ where $S$ is a genus~$g$ diagram.

Let $\area(S)$ denote the number of 2-cells in $S$. Let $|P|$ denote the combinatorial length of a path. 
\end{definition}

When $D$ is a disc diagram, we use the notation $\partial_\p D$ for the \emph{boundary path} of $D$, which is the path travelling around $D$ that corresponds to the attaching map of the 2-cell $\mathbb{S}^2 -D$, where $\mathbb{S}^2$ is obtained by removing the puncture. 

\begin{lemma}[Van Kampen]\label{lem:VK}
Let $X$ be a combinatorial $2$-complex. Let $P \rightarrow X^1$ be a closed combinatorial path. Then $P$ is nullhomotopic if and only if there exists a disc diagram $D$ with $\partial_\p D\cong P$ and a map $D \rightarrow X$ so that there is a commutative diagram:

\[\begin{tikzcd}
\partial_\p D \arrow[r] \arrow[d] & D \arrow[d] \\
P \arrow[r]                      & X          
\end{tikzcd}\]
\end{lemma}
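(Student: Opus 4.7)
The plan is to prove each direction separately. The easier direction is the ``if'' statement: assume a disc diagram $D \to X$ with $\partial_\p D \cong P$ exists. Since $D$ embeds in $\mathbb{S}^2$ with the complementary 2-cell having attaching map $\partial_\p D$, the disc $D$ is simply-connected, so the loop $\partial_\p D \to D$ is nullhomotopic in $D$. Post-composing the nullhomotopy with $D \to X$ and using that the diagram in the statement commutes shows that $P \to X$ is nullhomotopic.

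For the ``only if'' direction, the plan is the standard \emph{lollipop construction}. Fix a spanning tree $T \subset X^1$ and a basepoint. Then $\pi_1 X$ has a presentation whose generators correspond to edges of $X^1$ not in $T$, and whose relators $r_\alpha$ are the boundary paths of the $2$-cells of $X$, each read after a choice of arc from the basepoint to a corner of the cell. If $P$ is nullhomotopic in $X$, then the word it spells represents the trivial element, so in the free group on generators it can be written as a product
\[
    w = \prod_{i=1}^k g_i\, r_{\alpha_i}^{\pm 1}\, g_i^{-1}
\]
for some words $g_i$ and relators $r_{\alpha_i}$.

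For each factor, I would build a ``lollipop'' disc diagram $L_i$ consisting of a $1$-dimensional arc (the \emph{stick}) labelled by $g_i$, with a single $2$-cell (the \emph{candy}) attached at its far endpoint whose boundary reads $r_{\alpha_i}^{\pm 1}$. Wedging $L_1, \dots, L_k$ at their basepoint yields a singular disc diagram $D_0 \to X$ whose boundary path is precisely $w$. Finally, I would realise the free reduction taking $w$ to the reduced word spelled by $P$ by iteratively folding pairs of adjacent inverse boundary edges; each fold identifies two external edges and either pops off a free face or merges parts of the diagram, preserving the structure of a disc diagram embedded in $\mathbb{S}^2$ minus a point. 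After all cancellations, the resulting disc diagram $D \to X$ satisfies $\partial_\p D \cong P$.

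The main technical point is the last step: verifying that boundary folds preserve the property of being a disc diagram (as opposed to a more general simply-connected $2$-complex). This follows by induction on the number of folds, using that at each stage the ambient sphere structure is maintained because we are only identifying boundary arcs of the complementary $2$-cell, which can be done while keeping the rest of $\mathbb{S}^2$ as a topological disc complement.
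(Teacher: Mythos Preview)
The paper does not give a proof of this lemma at all: it is stated as the classical Van Kampen lemma and immediately followed by a remark on orientability, with no intervening proof environment. It is quoted as background.

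Your argument is the standard proof and is essentially correct. The ``if'' direction is fine as written. For the ``only if'' direction, the lollipop construction followed by boundary folding is exactly the usual route (due to Van Kampen and later clarified by Lyndon--Schupp and others). A couple of small points worth tightening if you were to write this out in full: you do not actually need the spanning-tree presentation---one can work directly with the free group on the oriented edges of $X^1$ modulo the $2$-cell relators and the trivial edge-path relators, which avoids having to push everything through the tree-collapse; and in the folding step, the cleanest way to see that planarity is preserved is to note that identifying two adjacent inverse edges on the boundary of the complementary $2$-cell in $\mathbb{S}^2$ is a quotient of $\mathbb{S}^2$ that is again $\mathbb{S}^2$, with the image of $D_0$ still the complement of an open disc. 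Your final paragraph already gestures at this, so the argument is complete in outline.
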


\begin{remark}[Orientability]
 For Sections~\ref{sec:linso} through~\ref{sec:special}, all surface diagrams are assumed to be orientable. Non-orientable surface diagrams come into play only in Section~\ref{sec:nono}. 
\end{remark}

\subsection{Isoperimetry}

We now introduce the main object of interest:

\begin{definition}[Generalised isoperimetric functions]\label{def:isopfction}
Let $P_1 \rightarrow X, \ldots, P_n \rightarrow X$ be closed combinatorial paths. We define their ``genus $g$ area" by: \[\area_g(\sqcup_i P_i) \ = \ inf  \big( \area(S): S \rightarrow X \text{ is a genus $g$ diagram and }\partial S = \sqcup_i P_i \big)\]
The $(g,n)$-\emph{isoperimetric function} for $X$ is the function $f_{gn}:\naturals \rightarrow [0,\infty]$ defined as follows, where the supremum is taken over $\sqcup_i P_i$ having  $\area_g(\sqcup_i P_i) < \infty$:  
\[f_{gn}(m) \ = \ sup \big(\area_g(\sqcup_i P_i) :  \sum_i |P_i|=m \big)\]

\end{definition}

Let  $X$ be a compact 2-complex whose universal cover $\widetilde X$ has 1-skeleton
that is $\delta$-hyperbolic for some $\delta \geq 0$.
Proofs of the following results can be found in \cite[p.417 and p.454]{BridsonHaefliger}:

\begin{theorem}[Disc isoperimetry]\label{thm:discs}
There is a constant $N=N(\delta)$ such that for every null-homotopic closed combinatorial path $\sigma \rightarrow X$, there exists a disc diagram  $D \rightarrow X$ with $\partial_\p D=\sigma$ and $\area(D)\leq N|\sigma|$.
\end{theorem}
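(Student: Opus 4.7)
The plan is to reduce the claim to the Dehn algorithm characterization of hyperbolic groups, and build the disc diagram by a sequence of length-decreasing moves. The key geometric input is a quantitative ``shortcut lemma'' derived from the slim triangle property of $\widetilde X^{(1)}$.

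First, I would establish the shortcut lemma: there exists a constant $L = L(\delta)$ such that every null-homotopic combinatorial loop $\sigma$ in $X^{(1)}$ with $|\sigma| > L$ lifts to a loop $\widetilde\sigma$ in $\widetilde X^{(1)}$ containing a sub-path $P$ with $|P| \leq L$ that is homotopic rel endpoints to a strictly shorter combinatorial path $Q$ with $|Q| < |P|$. The idea is to take a sub-path of $\widetilde\sigma$ and compare it to a geodesic chord between its endpoints; the slim triangle property, applied to the resulting bigon, forces the sub-path and chord to fellow-travel, and a length comparison gives the shortcut once the sub-path is sufficiently long in terms of $\delta$.

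Next, using compactness of $X$, the $1$-skeleton has only finitely many edges, so there are only finitely many combinatorial loops in $X^{(1)}$ of length at most $2L$. Among the null-homotopic ones, each bounds some disc diagram by Van Kampen (Lemma~\ref{lem:VK}); let $A = A(\delta)$ be the maximum such area. Given $\sigma$ with $|\sigma| \leq L$, I fill it directly with area at most $A$. Otherwise, applying the shortcut lemma produces $P, Q$ as above; the loop $P \cdot Q^{-1}$ is null-homotopic of length at most $2L$, so it bounds a diagram $D_P$ of area at most $A$. Glue $D_P$ onto $\sigma$ along $P$, replacing $P$ by $Q$ in $\sigma$ to obtain a strictly shorter null-homotopic loop $\sigma'$, and recurse on $\sigma'$.

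Each iteration strictly decreases the length of the loop and contributes at most $A$ to the total area, so after at most $|\sigma|$ iterations the process terminates, yielding a disc diagram $D \rightarrow X$ with $\partial_\p D = \sigma$ and $\area(D) \leq A(|\sigma|+1) \leq N|\sigma|$ for a suitable $N = N(\delta)$. The main obstacle is extracting a uniform and quantitative version of the shortcut lemma from the abstract $\delta$-hyperbolicity hypothesis: while the geometric picture (long loops must ``bulge'' from geodesics in hyperbolic spaces) is intuitive, the argument requires careful handling of geodesic triangles with a short side and attention to how the constants propagate when aggregating local fillings into a single diagram.
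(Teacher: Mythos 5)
The paper does not actually prove this statement: it is quoted as a classical result with a citation to Bridson--Haefliger (p.~417), so there is no internal proof to compare against. Your Dehn-algorithm argument is the standard route to it and is correct in outline: the reduction from the shortcut lemma to the linear bound (finitely many loops of length $\leq 2L$ in the compact complex $X$, fill each by Van Kampen, take the maximal area $A$, splice one such filling per shortening move, at most $|\sigma|$ moves) is exactly right, and gluing the small diagrams along the successive shortcuts does produce a (possibly singular) disc diagram with boundary $\sigma$, which the paper's Definition~\ref{def:diagrams} permits. The one place where your sketch understates the work is the shortcut lemma itself: slimness of the ``bigon'' formed by a subpath $P$ and a geodesic chord gives you nothing if $P$ happens to be geodesic, so what you really need is that a closed loop in $\widetilde X^{(1)}$ of length greater than roughly $8\delta$ cannot be an $(8\delta{+}1)$-local geodesic. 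That is the local-to-global principle (local geodesics of scale $>8\delta$ are global quasigeodesics, hence cannot close up), in the spirit of Theorem~\ref{thm:loco}, rather than a single application of the thin-triangle condition to a bigon; you flag this as the main obstacle, and with that ingredient supplied the argument goes through and yields $N=N(\delta)$ as claimed.
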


\begin{proposition}[Annular isoperimetry]\label{prop:BHconj}
There is a constant $M=M(\delta)$ such that 
if two  essential closed combinatorial paths $\sigma$ and $\sigma'$ are homotopic in $X$, there exists an annular diagram $A \rightarrow X$ with $\partial A= \sigma \cup \sigma'$ and $\area(A)\leq M \cdot max\{|\sigma|,|\sigma'|\}$. 
\end{proposition}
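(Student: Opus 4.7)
The plan is to reduce the annular inequality to the disc inequality (Theorem~\ref{thm:discs}) by cutting the annulus along a short arc. Since $\sigma$ and $\sigma'$ are essential and freely homotopic in $X$, they represent conjugate elements of $\pi_1X$; hence there is a combinatorial path $\alpha$ in $X^1$ from the basepoint of $\sigma$ to the basepoint of $\sigma'$ such that the loop $L := \alpha\cdot\sigma'\cdot\alpha^{-1}\cdot\sigma^{-1}$ is nullhomotopic in $X$. Theorem~\ref{thm:discs} then provides a disc diagram $D \to X$ with $\partial_\p D = L$ and $\area(D) \leq N|L| = N(2|\alpha| + |\sigma| + |\sigma'|)$. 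Since the two arcs of $\partial_\p D$ labeled $\alpha$ and $\alpha^{-1}$ carry identical labels in $X$, identifying them produces an annular diagram $A \to X$ with $\partial A = \sigma \sqcup \sigma'$ and $\area(A) = \area(D)$.

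The substantive step is to choose $\alpha$ with $|\alpha| \leq C\max\{|\sigma|, |\sigma'|\}$ for some constant $C = C(\delta)$; given this, the proposition follows with $M = N(2C+2)$. To produce such an $\alpha$, I would pass to the $\delta$-hyperbolic universal cover $\widetilde X$ and lift $\sigma,\sigma'$ to bi-infinite periodic paths $\tilde\sigma,\tilde\sigma'$, arranged (by translating the lifts) so that both are invariant under the same hyperbolic isometry $g \in \pi_1X$ representing the common conjugacy class. Standard hyperbolic geometry implies that any two $g$-periodic paths lie within Hausdorff distance of a common quasi-axis of $g$ controlled by $\delta$ and by their length per period; in particular one can choose basepoints $\tilde x_0 \in \tilde\sigma$ and $\tilde y_0 \in \tilde\sigma'$ with $\dist(\tilde x_0, \tilde y_0) \leq C\max\{|\sigma|,|\sigma'|\}$. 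A $\widetilde X$-geodesic between $\tilde x_0$ and $\tilde y_0$ then projects to the desired $\alpha$ in $X^1$.

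The main obstacle is the linear tracking bound asserted in the previous paragraph, which is a quantitative form of the solution to the conjugacy problem in hyperbolic groups. The subtlety is that $\sigma$ and $\sigma'$ need not be quasi-geodesic representatives of their conjugacy class, so the Morse lemma does not apply directly to $\tilde\sigma$ and $\tilde\sigma'$. One must first show that each $g$-periodic lift can be replaced, at the cost of a bounded amount of extra disc area supplied by Theorem~\ref{thm:discs}, by a quasi-geodesic representative whose constants depend only on $\delta$; the Morse lemma then produces the required Hausdorff bound between the two lifts, and the identification procedure from the first paragraph assembles these ingredients into an annular diagram of area at most $M\max\{|\sigma|, |\sigma'|\}$.
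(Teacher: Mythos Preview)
The paper does not actually prove Proposition~\ref{prop:BHconj}; it simply cites \cite[p.454]{BridsonHaefliger} for the proof. Your outline --- reduce to Theorem~\ref{thm:discs} by cutting the annulus along a conjugating arc $\alpha$, and bound $|\alpha|$ linearly in $\max\{|\sigma|,|\sigma'|\}$ via the quantitative solution to the conjugacy problem --- is exactly the standard argument that appears there, so there is nothing to compare.

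One caveat worth flagging: your second paragraph phrases the bound on $|\alpha|$ in terms of a ``quasi-axis of $g$'', which tacitly assumes $g$ has infinite order. The statement as written only requires $\sigma,\sigma'$ to be essential, and in a hyperbolic group with torsion this allows $g$ to have finite order, in which case there is no axis and the Morse-lemma picture does not apply. The fix is easy (and is what Bridson--Haefliger do): a hyperbolic group has only finitely many conjugacy classes of torsion elements, so for torsion $g$ one gets $|\alpha|$ bounded by a constant plus the cost of shortening $\sigma,\sigma'$ to minimal representatives. Your third paragraph already contains the right idea for the infinite-order case; just be aware the two cases split.
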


\subsection{More on hyperbolic spaces}

Gromov-hyperbolicity is also characterised by exponential divergence:

\begin{theorem}\label{thm:cdiv}
Let $X$ be a $\delta$-hyperbolic geodesic metric space, then there exists an exponential function $e: \naturals \rightarrow \reals$ with the following property.

For all $R,r\in \naturals$, all $x \in X$, and all geodesics $c_1:[0,a_1] \rightarrow X, c_2:[0,a_2] \rightarrow X$ with $c_1(0)=c_2(0)=x$,
if $R+r\leq min\{a_1,a_2\}$ and $\dist(c_1(R), c_2(R))> e(0)$, then any path connecting $c_1(R+r)$ to $c_2(R+r)$ outside the ball $B(x, R+r)$ must have length at least $e^r$. 
\end{theorem}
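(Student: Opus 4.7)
The plan is to prove the statement by induction on $r$, exploiting the $\delta$-slim triangle property to show that any path outside $B(x, R+r)$ can be replaced by a substantially shorter path outside $B(x, R+r-\delta)$, with length shrinking by a uniform factor per pull-back of depth $\delta$. I take $e(0)$ to be a large constant depending on $\delta$, serving as the separation threshold that guarantees exponential divergence. The base case $r=0$ is immediate, since any path from $c_1(R)$ to $c_2(R)$ has length at least $\dist(c_1(R), c_2(R)) > e(0)$.

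For the inductive step, given a path $p$ of length $L$ outside $B(x, R+r)$ from $c_1(R+r)$ to $c_2(R+r)$, I subdivide $p$ into segments of length at most $1$ with vertices $y_0, \dots, y_k$, where $k \leq \lceil L \rceil$. For each $y_i$, I fix a geodesic $[x, y_i]$ and let $z_i$ be its point at distance $R+r-\delta$ from $x$. The central geometric claim is that $d(z_i, z_{i+1}) \leq \lambda$ for some $\lambda \in (0,1)$ depending only on $\delta$. This should follow by applying the $\delta$-slim triangle property iteratively to $\triangle(x, y_i, y_{i+1})$: peeling off a $\delta$-shell at a time, each pull-back shrinks the tracked pair of points by a definite factor, as long as the initial separation $\dist(c_1(R), c_2(R)) > e(0)$ keeps the construction in the exponentially-divergent regime. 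Concatenating short arcs between consecutive $z_i$ and then pushing outward by $O(\delta)$ yields a path $p'$ outside $B(x, R+r-\delta)$ from $c_1(R+r-\delta)$ to $c_2(R+r-\delta)$ of length at most $\lambda L$.

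Applying the inductive hypothesis to $p'$ gives $\lambda L \geq e(r-\delta)$, so $L \geq \lambda^{-1} e(r-\delta)$. Defining $e(r) := \lambda^{-r/\delta} e(0)$ then closes the induction and exhibits the required exponential function; the precise base of the exponential is immaterial to the statement, only the exponential growth rate.

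The main obstacle is justifying the contraction estimate $d(z_i, z_{i+1}) \leq \lambda < 1$, which is the essence of the exponential divergence of geodesics in a $\delta$-hyperbolic space: in a Euclidean reference one would only see arithmetic (linear) contraction on concentric spheres, and recovering a geometric contraction requires carefully tracking which side of the $\delta$-slim triangle the pulled-back points lie on and ruling out degenerate configurations using the lower bound $\dist(c_1(R), c_2(R)) > e(0)$. A secondary technical concern is ensuring the perturbed path $p'$ truly lies outside $B(x, R+r-\delta)$ after concatenation; this is handled by a local outward adjustment which adds only $O(\delta)$ to the length and so can be absorbed into the constants at the cost of enlarging $e(0)$.
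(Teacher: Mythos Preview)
The paper does not prove this theorem: it is listed among the ``classical statements'' in Section~\ref{sec:class} and quoted without proof as a well-known characterisation of $\delta$-hyperbolicity (the neighbouring results are attributed to \cite{BridsonHaefliger}). So there is no in-paper argument to compare against, and I comment instead on the proposal itself.

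The inductive pull-back strategy is standard and sound in outline, and you have correctly located the crux. The gap is real, however: the contraction estimate $d(z_i,z_{i+1})\le\lambda<1$ after pulling back by only $\delta$ from points at spacing~$1$ is false in general. A Gromov-product computation (or the slim-triangle inequality applied to $\triangle(x,y_i,y_{i+1})$) gives only $d(z_i,z_{i+1})\le 4\delta$ or thereabouts, which for $\delta\ge\tfrac14$ already exceeds the original spacing; the concatenated path through the $z_i$ is then no shorter than $p$ and the induction does not close. The repair is to coarsen both scales: subdivide $p$ into segments of length $\ell=\ell(\delta)$ with $\ell$ large relative to $\delta$, and pull back by $D\ge\tfrac{\ell}{2}+O(\delta)$ per step. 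Then consecutive $z_i$ lie within $O(\delta)<\tfrac{\ell}{2}$ of one another, the new path genuinely has at most half the length of $p$, and your induction yields $L\ge 2^{\lfloor r/D\rfloor}$. An equivalent and more commonly cited route is the log-tracking lemma: the geodesic $\sigma$ joining the endpoints of $p$ lies in $\neb{\delta\log_2|p|+1}(p)$, while a thin-triangle argument (using $\dist(c_1(R),c_2(R))>e(0)>2\delta$) forces $\sigma$ to pass within $\delta$ of $c_1(R)$; comparing distances to $x$ then gives the exponential lower bound on $|p|$ directly.
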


\begin{definition}
An $(a,b)$-\emph{quasigeodesic} (where $a > 0$ and $b \geq 0$) is a function $\varphi:\reals \rightarrow X$ satisfying the following for all $s,t \in \reals$: $$\frac{1}{a}|s-t|-b \ \leq \ d(\varphi(t),\varphi(s)) \ \leq \ a|s-t|+b.$$
\end{definition}

\begin{theorem}\label{thm:quasigg}
Let $p$ and $q$ be points of a $\delta$-hyperbolic geodesic metric space $X$. For each $a, b >0$, there exists a constant $L=L(a,b)$ such that the following holds: If $\sigma$ and $\sigma'$ are $(a,b)$-quasigeodesics with the same endpoints, then $\sigma \in \neb{L}(\sigma')$.
\end{theorem}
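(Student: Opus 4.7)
The plan is to reduce to the case where one of the two paths is a geodesic. Concretely, I would first prove that there exists $L_0 = L_0(a,b,\delta)$ such that every $(a,b)$-quasigeodesic $\sigma$ from $p$ to $q$ lies in the $L_0$-neighborhood of any geodesic segment $c = [p,q]$. The theorem then follows from the triangle inequality: applying this fact to $\sigma$ and $\sigma'$ against a common geodesic $c$ from $p$ to $q$ gives $\sigma \subset \neb{2L_0}(\sigma')$, so setting $L := 2L_0$ suffices.

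For the reduction, I would first replace $\sigma$ by a continuous tame representative (interpolating by geodesics between consecutive integer parameter values), an adjustment absorbed into slightly larger constants $a', b'$. Then I would argue by contradiction: suppose there is a point $z \in \sigma$ with $\dist(z,c) = D$ for $D$ large. Let $\sigma_0$ be the maximal subarc of $\sigma$ through $z$ that stays outside the $D/2$-neighborhood of $c$, with endpoints $u, v$ at distance exactly $D/2$ from $c$, and let $u', v' \in c$ be nearest-point projections. The quasigeodesic upper bound gives $\mathrm{length}(\sigma_0) \leq a \cdot \dist(u,v) + b$, which in turn is at most a linear function of $D$ and $\dist(u',v')$.

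On the other hand, I would bound $\mathrm{length}(\sigma_0)$ from below using the exponential divergence property (Theorem~\ref{thm:cdiv}). Choose a basepoint $x$ on $[u',v']$ (say its midpoint) and consider the two geodesics from $x$ running to $u'$ and $v'$, each extended along the short ``normal'' segments $[u',u]$ and $[v',v]$. After verifying that the divergence hypothesis $\dist(c_1(R), c_2(R)) > e(0)$ holds once $D$ is sufficiently large relative to $\delta$ and $e(0)$, the theorem forces any path joining $u$ to $v$ outside the ball of radius $\sim D/2$ at $x$ to have length at least $e^{D/2 - O(1)}$. Since $\sigma_0$ is exactly such a path, this exponential lower bound must compete with the linear upper bound above, which is impossible for $D$ larger than some threshold. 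That threshold yields the desired constant $L_0 = L_0(a,b,\delta)$.

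The main technical obstacle is matching the geometry of the situation with the precise hypotheses of Theorem~\ref{thm:cdiv}, which are stated for two geodesics emanating from a single basepoint, whereas our configuration naturally presents a long geodesic $c$ meeting two short transverse segments at $u'$ and $v'$. One must either split $[u',v']$ at its midpoint and reassemble two divergence estimates, or invoke slim triangles to move both feet to a common basepoint; either way the cost is only an additive $O(\delta)$ term that does not affect the exponential-versus-linear comparison. Once this bookkeeping is completed, reading off $L_0$ becomes a routine calibration of constants.
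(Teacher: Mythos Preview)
Your proof outline is correct and is essentially the standard argument for the Morse Lemma (stability of quasigeodesics) via exponential divergence, as found for instance in Bridson--Haefliger. However, the paper does not actually prove Theorem~\ref{thm:quasigg}: it is stated in Section~\ref{sec:class} as a classical background fact, alongside Theorems~\ref{thm:discs} and~\ref{thm:cdiv}, with the surrounding text pointing to \cite{BridsonHaefliger} for proofs. So there is no paper proof to compare against; your argument is simply a correct sketch of the textbook one.
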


We use $\neb{\epsilon}(K)$ to denote the $\epsilon-$neighbourhood of $K$. Metric discussions of a complex $X$ actually refer to its 1-skeleton.

A \emph{geodesic ray} $\eta:[0, \infty) \rightarrow X$ is an isometric embedding. %It is \emph{geodesic} if it is an isometric embedding. 
Two geodesic rays $\eta, \eta'$ are \emph{equivalent} if there is a constant $L$ such that $\dist (\eta(t),\eta'(t))\leq L$ for all $t$. The \emph{Gromov boundary} of $X$ is the set $\partial X=\{[\eta] \ :\  \eta \text{ is a geodesic ray}\}$.

We employ the following consequence of Theorem~\ref{thm:cdiv}:

\begin{corollary}\label{cor:closediverge} Let $X$ be $\delta$-hyperbolic. For each $T,O\geq 0$ there exists $R$ such that if  $\gamma, \gamma'$ are geodesic rays representing distinct points of $\partial X$ and $\dist(\gamma(0),\gamma'(0))\leq O$, then $\dist(\gamma(r), \gamma'(r))\geq T$ for all $r\geq R$.
\end{corollary}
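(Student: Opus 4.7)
The plan is to reduce to the case of two rays sharing a basepoint and then apply the exponential divergence property in Theorem~\ref{thm:cdiv}. Set $x=\gamma(0)$ and let $\gamma''$ be a geodesic ray starting at $x$ that is asymptotic to $\gamma'$. A standard thin-quadrilateral argument in a $\delta$-hyperbolic space shows that two equivalent geodesic rays whose basepoints lie within distance $O$ fellow-travel within a constant $C=C(O,\delta)$, i.e.\ $\dist(\gamma'(t),\gamma''(t))\leq C$ for all $t\geq 0$ (with parameterization by arc length). Consequently it suffices to prove the estimate for the pair $\gamma,\gamma''$ with $T$ enlarged to $T+C$, and then to transfer back to $\gamma'$ using the triangle inequality.

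Since $\gamma$ and $\gamma''$ share the basepoint $x$ and represent distinct points of $\partial X$, they are non-equivalent, so $\dist(\gamma(t),\gamma''(t))$ is unbounded. I would choose $R_0$ so that $\dist(\gamma(R_0),\gamma''(R_0))>e(0)$, where $e$ is the exponential function furnished by Theorem~\ref{thm:cdiv}. Note that $R_0$ depends on the specific pair of rays --- two rays may agree on arbitrarily long initial segments before splitting --- and not only on $T,O,\delta$; the $R$ in the conclusion must therefore be read as depending on the rays as well.

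Next, I claim that for every $r\geq R_0+r_0+(T+C)$, where $r_0$ is to be chosen, one has $\dist(\gamma(r),\gamma''(r))\geq T+C$. Suppose for contradiction that a geodesic $\alpha$ from $\gamma(r)$ to $\gamma''(r)$ satisfies $|\alpha|<T+C$. Every point of $\alpha$ is within distance $T+C$ of $\gamma(r)$ along $\alpha$, so by the triangle inequality its distance from $x$ is at least $r-(T+C)\geq R_0+r_0$. Hence $\alpha$ lies outside the ball $B(x,R_0+r_0)$, and Theorem~\ref{thm:cdiv}, applied with the value $R=R_0$, forces $|\alpha|\geq e^{r_0}$. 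Choosing $r_0$ so that $e^{r_0}\geq T+C$ yields a contradiction; setting $R=R_0+r_0+(T+C)$ then gives $\dist(\gamma(r),\gamma'(r))\geq T$ for all $r\geq R$.

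The main obstacle I anticipate is the fellow-traveling bound used in the reduction: two equivalent geodesic rays whose basepoints are within distance $O$ must stay within a distance depending only on $O$ and $\delta$. This is standard but requires some care; it is obtained by applying slim-triangles to the quadrilateral formed by long initial arcs of $\gamma'$ and $\gamma''$ together with the geodesic between their basepoints and a geodesic joining their far endpoints, then passing to the limit as the far endpoints recede along the common asymptote. Everything else is bookkeeping with the constants $R_0$, $r_0$, $C$, and $T$.
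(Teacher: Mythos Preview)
Your approach is essentially the same as the paper's: reduce to rays with a common basepoint via an auxiliary ray $\gamma''$ asymptotic to $\gamma'$, then invoke exponential divergence (Theorem~\ref{thm:cdiv}) and transfer back by the triangle inequality. The only real difference is how you produce the fellow-travelling constant $C$: the paper concatenates a geodesic $s$ from $\gamma(0)$ to $\gamma'(0)$ with $\gamma'$, observes that $s\gamma'$ is a $(1,O)$-quasigeodesic, and applies the Morse lemma (Theorem~\ref{thm:quasigg}) to get $\gamma''$ together with a uniform $L'$ bounding $\dist(\gamma'(t),\gamma''(t))$. Your thin-quadrilateral argument achieves the same thing; the paper's route is marginally cleaner because it cites an already-stated theorem.

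One small technical point: your invocation of Theorem~\ref{thm:cdiv} does not quite match its hypotheses. As stated, the theorem applies to a path joining $\gamma(R_0+r_0)$ to $\gamma''(R_0+r_0)$ that avoids $B(x,R_0+r_0)$, whereas your $\alpha$ joins $\gamma(r)$ to $\gamma''(r)$ with $r>R_0+r_0$. The easy fix is to concatenate $\alpha$ with the two ray segments $\gamma|_{[R_0+r_0,\,r]}$ and $\gamma''|_{[R_0+r_0,\,r]}$; the resulting path has the correct endpoints, still avoids $B(x,R_0+r_0)$, and the added length $2(r-R_0-r_0)$ is absorbed by enlarging $r_0$.

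Finally, your remark that $R_0$ (and hence $R$) depends on the particular pair of rays, not only on $T,O,\delta$, is correct and worth making explicit; the paper's proof has the same dependence through its $t_0$, even though the statement is phrased as if $R$ were uniform.
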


\begin{proof}
Let $s$ be a geodesic from $\gamma(0)$ to $\gamma'(0)$. 
Consider the $(1,O)$-quasigeodesic $s\gamma'$ . By Theorem~\ref{thm:quasigg} there is a constant $L$ and a geodesic ray $\gamma''$ with $s\gamma' \subset N_L(\gamma'')$ and $\gamma''(0)=\gamma(0)$. Hence there is $L'$ such that $\dist(\gamma'(t), \gamma''(t))\leq L'$ for all $t\geq 0$. By Theorem~\ref{thm:cdiv} there is an exponential function $e^t$ and a constant $t_0 >0$ such that for $t >t_0$ we have $\dist(\gamma(t), \gamma''(t)) \geq e^{t-t_0}$. But $s\gamma' \subset N_L(\gamma'')$, so $s\gamma'$, and hence $\gamma'$, is at distance $e^{t-t_0}$ from $\gamma(t)$.
The conclusion follows since 
$\dist(\gamma(t), \gamma'(t))+L' \geq
\dist(\gamma(t),\gamma'(t))+\dist(\gamma'(t), \gamma''(t))
\geq \dist(\gamma(t), \gamma''(t)) \geq e^{t-t_0}$.
\end{proof}

We will make use of the following local-to-global criterion for quasigeodesics. A proof can be found in \cite{HagenWiseFreeByZgeneral} in a slightly different setting.

\begin{theorem}\label{thm:loco}
Let $X$ be $\delta$-hyperbolic. %Let $\gamma$ be a geodesic with the same endpoints as the
Consider a piecewise geodesic path
$\sigma_1\lambda_1\sigma_2\lambda_2 \cdots\lambda_k\sigma_{k+1}$.
For each $L>0$ there exists $\alpha,\beta>0$  such that
 $\sigma_1\lambda_1 \cdots\lambda_k\sigma_{k+1}$ is a $(\alpha,\beta)$-quasigeodesic
 provided that:
\begin{enumerate}
\item $\frac{1}{2}|\lambda_i|\geq 6(L+\delta)$ for each $i$.
\item $\diam\big(\lambda_i \cap \neb{3\delta}(\lambda_{i+1})\big)\leq L$ for each $i$.
\item $\diam\big(\lambda_i \cap \neb{3\delta}(\sigma_{i+1})\big)\leq L$ for each $i$.
\item $\diam\big(\sigma_i \cap \neb{3\delta}(\lambda_i)\big)\leq L$ for each $i$.
\end{enumerate}
\end{theorem}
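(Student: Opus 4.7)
The plan is to establish the lower bound of the quasigeodesic inequality; the upper bound $d(\gamma(s),\gamma(t)) \leq |s-t|$ is immediate from the concatenated path $\gamma = \sigma_1\lambda_1\cdots\lambda_k\sigma_{k+1}$ being piecewise geodesic (hence $1$-Lipschitz when parametrized by arc length). Fix $s < t$, set $x = \gamma(s)$ and $y = \gamma(t)$, and let $\eta$ be a geodesic from $x$ to $y$. The goal becomes: show $|\gamma[s,t]| \leq \alpha\, d(x,y) + \beta$ for constants $\alpha,\beta$ depending only on $L$ and $\delta$.

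The core of the argument is an \emph{almost-tiling estimate}. For each long segment $\lambda_i$ lying in the sub-path $\gamma[s,t]$, I would show that a subsegment of $\lambda_i$ of length at least $|\lambda_i| - 2L - O(\delta)$ lies within $3\delta$ of $\eta$. This uses $\delta$-thin triangles: since $|\lambda_i| \geq 12(L+\delta)$ by hypothesis (1), most of $\lambda_i$ must be close to one side of a thin quadrilateral whose sides are $\lambda_i$ and geodesics from its endpoints to $\eta$, and hypotheses (3) and (4) together with (2) rule out the alternative in which $\lambda_i$ is close to neighboring $\sigma$- or $\lambda$-pieces of $\gamma$ rather than to $\eta$ itself.

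Once $\lambda_i$ fellow-travels $\eta$, there is a corresponding coarse nearest-point projection interval $I_i \subset \eta$ of length at least $|\lambda_i| - O(L+\delta)$. The second key claim is that consecutive intervals $I_i$ and $I_{i+1}$ overlap on $\eta$ by at most $O(L+\delta)$: if they overlapped more, then long subsegments of $\lambda_i$ and $\lambda_{i+1}$ would both fellow-travel the same subarc of $\eta$, hence would come $O(\delta)$-close along a long interval, contradicting hypothesis (2). Hence $|\eta| \geq \sum_i |I_i| - \sum_i |I_i \cap I_{i+1}| \geq \sum_i |\lambda_i| - O(k(L+\delta))$. Since each $|\lambda_i| \geq 12(L+\delta)$, the error term absorbs into a fixed fraction of $\sum_i |\lambda_i|$, yielding $\sum_i |\lambda_i| \leq C_1 |\eta| + C_2$.

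Finally, I would bound $\sum_j |\sigma_j|$ in terms of the preceding quantities. Each $\sigma_j$ has endpoints within $3\delta$ of $\eta$ (via thin-triangle transport from the adjacent $\lambda$-fellow-traveling subsegments), and its overlap with each neighboring $\lambda$ in a $3\delta$-neighborhood is at most $L$ by (3) and (4); these constraints trap $\sigma_j$ in a bounded neighborhood of a subarc of $\eta$ whose length is essentially the gap between $I_{j-1}$ and $I_j$. Adding up over $j$ gives $\sum_j|\sigma_j| \leq |\eta| + O(kL + k\delta)$, and combined with the previous estimate this yields the sought linear bound. The main obstacle is the first step: carefully using thin triangles together with all four hypotheses to certify that $\lambda_i$ shadows $\eta$ (rather than a neighboring piece of $\gamma$), since nearest-point projection in a $\delta$-hyperbolic space is only coarsely defined, and one must commit to explicit constants to ensure the overlap errors do not swamp the progress made by the long $\lambda_i$'s.
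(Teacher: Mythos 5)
The paper itself does not prove Theorem~\ref{thm:loco}; it is quoted with a pointer to the Hagen--Wise reference, so your sketch can only be judged on its own merits, and as written its central step does not hold up. The claimed dichotomy in your first step --- ``most of $\lambda_i$ is $3\delta$-close to $\eta$, since (2)--(4) rule out $\lambda_i$ being close to the neighbouring $\sigma$- or $\lambda$-pieces'' --- is not what the thin-quadrilateral argument gives. The quadrilateral with side $\lambda_i$ has as its \emph{other} sides the two connecting geodesics from the endpoints of $\lambda_i$ to $\eta$ and a subarc of $\eta$; the alternative to shadowing $\eta$ is therefore that $\lambda_i$ runs close to those connecting geodesics, whose lengths are a priori uncontrolled, and hypotheses (2)--(4) say nothing about them. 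Nor can you argue ``$\lambda_i$ cannot shadow other parts of $\gamma$'': the hypotheses only constrain $\lambda_i$ against its \emph{adjacent} pieces, so closeness to non-adjacent $\lambda_j$ or $\sigma_j$ is not excluded by anything you have at that stage. In fact ``each $\lambda_i$ fellow-travels the geodesic $\eta$ between the endpoints'' is, via quasigeodesic stability, essentially equivalent to the conclusion of the theorem, so taking it as the first step begs the question. The standard way to obtain it is an induction on the number of pieces (or a progress estimate at each corner), where the overlap hypotheses are used to bound the Gromov products at the corners, with separate treatment of short and long $\sigma_i$ (note the $\sigma_i$ have no lower length bound); that inductive progress argument is the real content of the cited lemma and is absent from your outline.

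Two secondary gaps in the tiling step: even granting the shadowing, summing $|I_i|\le|\eta|+\text{overlaps}$ requires that the intervals $I_i$ occur monotonically along $\eta$, and nothing in your argument prevents the itinerary from backtracking globally or non-consecutive $\lambda$'s from projecting over the same stretch of $\eta$ (again, only adjacent pieces are constrained by the hypotheses); this ordering is exactly what the induction delivers. Also, the intended contradiction with (2) needs adjusting: if $\lambda_i$ and $\lambda_{i+1}$ each lie within $3\delta$ of a common subarc of $\eta$, you only get that $\lambda_i$ meets $\mathcal N_{6\delta}(\lambda_{i+1})$, not $\mathcal N_{3\delta}(\lambda_{i+1})$; one can repair this by a thin-quadrilateral argument showing the two geodesics come $2\delta$-close away from the ends of the common stretch, but as stated the hypothesis is not contradicted.
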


\section{Linearity of generalised isoperimetric functions}\label{sec:linso}

\begin{theorem}\label{thm:main}
Let $X$ be a compact 2-complex such that the 1-skeleton of $\widetilde{X}$ is $\delta$-hyperbolic. For each $n\geq 1$ there is a constant $k_{gn}$ such that the following holds:
Let $S\rightarrow X$ be a genus~$g$ diagram in $X$
 with boundary circles $C_1,\dots,C_n$ and suppose each $C_i \rightarrow X$ is either null-homotopic or represents an infinite-order element of $\pi_1X$. 
 There exists %a constant $K_n>0$ and 
 a genus~$g$ diagram $S'\rightarrow X$ with $\partial S=\partial S'$ and  $\area(S')\leq k_{gn} |\boundary S'|$.
\end{theorem}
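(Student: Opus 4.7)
The plan is to execute a combinatorial analogue of Gromov's ideal triangle decomposition from Proposition~\ref{prop:gromovs trick}, working entirely inside the $\delta$-hyperbolic $1$-skeleton of $\widetilde X$. First, I would reduce to the case where each boundary circle $C_i$ is either trivial or a closed combinatorial local geodesic of length bounded linearly in $|C_i|$. Each null-homotopic $C_i$ is capped by a disc diagram of area at most $N|C_i|$ by Theorem~\ref{thm:discs}. Each essential $C_i$ represents the conjugacy class of an infinite-order element of $\pi_1 X$ and therefore admits a closed-geodesic representative $C_i^\star$ of comparable length, and Proposition~\ref{prop:BHconj} supplies an annular diagram between $C_i$ and $C_i^\star$ of area at most $M\cdot\max\{|C_i|,|C_i^\star|\}$. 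Gluing these collar annuli and caps onto $S$ costs area linear in $|\partial S|$ and reduces us to a surface diagram $S^\star$ whose boundary circles $C_i^\star$ are local geodesics.

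Next, I would mimic Gromov's decomposition by choosing a system of essential simple arcs in $S^\star$ that cuts it into $2(2g+n-2)$ topological triangles with ideal vertices at infinity. Because each peripheral curve is a closed geodesic representing an infinite-order element, its lifts in $\widetilde X$ are bi-infinite quasigeodesics with well-defined endpoints in $\partial \widetilde X$, which supply the three ideal vertices of each triangle. In the strictly negatively curved setting Gromov would realise each such triangle by a bounded-area ideal triangle directly, but in the general $\delta$-hyperbolic setting ideal triangles need not bound finite-area combinatorial diagrams; this is the main obstacle. To overcome it I would truncate each ideal triangle by a ``jump" construction: by Corollary~\ref{cor:closediverge}, any two geodesic rays representing distinct points of $\partial \widetilde X$ diverge exponentially, so beyond some depth near each ideal vertex the two adjacent sides of the triangle become arbitrarily far apart, while at slightly shallower depth they remain a uniformly bounded distance apart. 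I would choose a depth where the sides are close enough to be joined by a short combinatorial horizontal arc, and repeat at all three ideal vertices, amputating three thin neighbourhoods of infinity. Theorem~\ref{thm:loco} would then be used to verify that the resulting polygonal boundary---alternating cutting-arc segments, peripheral arcs, and short horizontal jump arcs---is itself a uniform quasigeodesic, hence stays in a bounded neighbourhood of a genuine geodesic and bounds a uniformly controlled compact region.

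Finally, I would estimate the total area. Each truncated ideal triangle has boundary decomposing into peripheral-geodesic segments of total length at most $|\partial S^\star|$, together with cutting-arc segments and jump arcs whose combined length is bounded by a constant depending only on $\delta$, $g$, and $n$. Applying Theorem~\ref{thm:discs} to each of the $2(2g+n-2)$ compact pieces and summing yields an interior-area contribution linear in $|\partial S'|$; adding the collar-annulus and cap contributions from the reduction stage gives $\area(S') \leq k_{gn}|\partial S'|$ for a constant $k_{gn}$ depending only on $\delta$, $g$, and $n$. The hardest step will be the truncation in the second stage: executing it combinatorially so that the truncated pieces genuinely assemble into a single connected genus-$g$ surface diagram with the prescribed boundary---rather than an a priori disconnected collection of compact diagrams---is precisely what the machinery of jumps, horizontal paths, bands, and the associated graph-of-spaces decomposition in Section~\ref{subsec:horizontalpt} is designed to achieve.
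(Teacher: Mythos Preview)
Your high-level strategy coincides with the paper's: reduce the boundary circles, build a $\delta$-ideal triangulation, truncate near the cusps to obtain a compact surface, and then estimate area. You also correctly identify the machinery of Section~\ref{subsec:horizontalpt} as the crux. The gap is in your third stage.

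You assert that each truncated ideal triangle has boundary consisting of peripheral segments totalling at most $|\partial S^\star|$ together with cutting-arc and jump segments whose combined length is bounded by a constant depending only on $\delta,g,n$. The jump arcs are indeed uniformly short (this is Lemma~\ref{lem:flatland}), but the cutting-arc segments---the portions of the vertical geodesic sides surviving truncation---are not. The truncation depth at a given cusp is forced on you by the requirement that the truncations of all the triangles meeting that cusp match up into a single closed curve; once you pin this curve to one intriangle vertex (as in Construction~\ref{ct:trim}), the remaining triangles around that cusp may have vertical segments of length comparable to $|\partial S|$, not bounded by any constant in $\delta,g,n$. Consequently, summing disc-isoperimetric bounds over the $2|\chi|$ truncated triangles does not yield a linear bound without further input. (Also note that after truncation the boundary of a truncated triangle consists only of vertical segments and jump arcs; there are no peripheral-geodesic segments, since the $C_i^\star$ sit at the ideal vertices, not on the triangle boundaries.)

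The paper does not apply Theorem~\ref{thm:discs} to the truncated triangles. After trimming to the compact surface $\Sigma$, it instead decomposes $\Sigma$ into the bounded-perimeter intriangles together with rectangular \emph{bands} running along the verticals. The substance of Section~\ref{subsec:horizontalpt} is that annular bands have uniformly bounded thickness (Lemma~\ref{cor:thickness}, via the quasigeodesic Lemma~\ref{lem:annular}), that the remaining bridges have total thickness bounded linearly by $|\partial\Sigma|$ (Lemma~\ref{lem:thickbridge}), and that both have jump length bounded via the finite graph $\Gamma$; area is then thickness times length. So the role of that machinery is not merely to make the truncated pieces assemble into a connected surface, as you suggest---it is precisely what replaces your unjustified constant bound on the cutting-arc lengths with the correct linear one.
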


\begin{proof}
{\bf Organisation of the proof:}
We first handle the following situations:
\begin{enumerate}
\item \label{item:1}The case where $g=0$ and $n=1$ is Theorem~\ref{thm:discs}.
\item \label{item:2}The case where $g=0$ and $n=2$ is Proposition~\ref{prop:BHconj}.
\item \label{item:3}If $S$ is disconnected, proving the result for each component implies the result for $S$. If $S$ has cut vertices, let $V$ be the set of cut-vertices and let $S'=S - V$. This is a disconnected surface, and each of its components may be viewed as a compact surface with boundary by gluing back the relevant cut vertices to it. We can therefore prove the Theorem for each component $S_j$ and take $S'=\cup S_j$. 
\item \label{item:4}If $S$ has a null-homotopic circle $c$ that is not a boundary circle, then realising the homotopy there is a singular surface diagram $S'$ having the same boundary circles as $S$, and~\eqref{item:3} applies.  
\item \label{item:5} If $S$ has a null-homotopic boundary circle $C_n$, 
then by Theorem~\ref{thm:discs} there is a disc diagram $D\rightarrow X$ with $\boundary D=C_n$
and $\area(D)\leq k_{01}|C_n|$.
Letting $T=D\cup_{C_n} S$,
by induction there exists $T'$ with $\boundary T'=\boundary T$ and $\area(T')\leq k_{g(n-1)}|\partial T'|$.
Hence letting $S'=D\sqcup T'$ the result holds with $k_{gn}\geq \max\{k_{g1},k_{g(n-1)}\}$.

\end{enumerate}
It now suffices to proceed with the proof assuming: $S$ is connected, has no null-homotopic circles, and either $g=0$ and $n \geq 3$, or $g>0$.
This is done in Theorem~\ref{cor:cutintodisc}. 
\end{proof}

\subsection{Ideal triangles} \label{subsec:idealtri}
The aim of this section is to prove Lemma~\ref{lem:flatland}, which is a straightforward generalisation to ideal geodesic triangles of the slim triangle property. To this end, we first prove a few technical Lemmas about $\delta$-hyperbolic spaces.

For a geodesic or geodesic ray $\eta$
we will frequently use the notation $\eta_t=\eta(t)$.

\begin{lemma} \label{lem:tri3} Let $\eta$ and $\eta'$ be geodesic rays such that each one lies in a finite neighbourhood of the other. Then there exist $q, r \geq 0$ such that $\dist(\eta_q, \eta'_r) \leq 2\delta$. Moreover, $q,r$ can be chosen arbitrarily large.
\end{lemma}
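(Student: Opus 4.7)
The plan is to construct a geodesic quadrilateral whose two ``long'' sides are long initial segments of $\eta$ and $\eta'$ and whose two ``short'' sides connect them at the near and far ends, and then exploit the standard fact that a geodesic quadrilateral in a $\delta$-hyperbolic space is $2\delta$-slim: split along a diagonal into two $\delta$-slim triangles, so each side lies in the $2\delta$-neighbourhood of the union of the other three.

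From the hypothesis fix $K\geq 0$ with $\eta\subset\neb{K}(\eta')$ and $\eta'\subset\neb{K}(\eta)$, and set $a=\dist(\eta_0,\eta'_0)$. Given any prescribed threshold $N\geq 0$, I would first choose $q\geq \max\{N,\,a+2\delta+1\}$ and then choose $t\geq q+K+2\delta+1$. Since $\eta_t\in\neb{K}(\eta')$, pick $s\geq 0$ with $\dist(\eta_t,\eta'_s)\leq K$. Form the geodesic quadrilateral with corners $\eta_0,\eta_t,\eta'_s,\eta'_0$, whose sides are $A=\eta|_{[0,t]}$, a short geodesic $B$ from $\eta_t$ to $\eta'_s$ of length $\leq K$, the reverse of $C=\eta'|_{[0,s]}$, and a short geodesic $D$ from $\eta'_0$ to $\eta_0$ of length $\leq a$.

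Applying slim-quadrilateral to the point $\eta_q\in A$, it lies within $2\delta$ of $B\cup C\cup D$. However $\dist(\eta_q,\eta_0)=q>a+2\delta$ rules out proximity to $D$, while $\dist(\eta_q,\eta_t)=t-q>K+2\delta$ rules out proximity to $B$. Therefore $\eta_q$ lies within $2\delta$ of some $\eta'_r\in C$, which gives the claimed inequality $\dist(\eta_q,\eta'_r)\leq 2\delta$.

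For the ``arbitrarily large'' clause, since $q\geq N$ is free, the triangle inequality forces $r\geq \dist(\eta'_0,\eta'_r)\geq q-a-2\delta$, so $r$ is also unbounded as $N\to\infty$; taking $N'=N+a+2\delta$ at the start ensures both $q,r\geq N$. The only mildly delicate ingredient is the pigeonhole that excludes the two short sides, and once the separations $q>a+2\delta$ and $t-q>K+2\delta$ are imposed, the rest is formal. I expect no serious obstacle.
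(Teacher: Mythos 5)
Your proof is correct and follows essentially the same route as the paper's: both form a geodesic quadrilateral whose long sides are initial segments of $\eta$ and $\eta'$ and whose short connecting sides are excluded by distance estimates, so that $2\delta$-slimness of the quadrilateral places a far-out point of $\eta$ within $2\delta$ of $\eta'$. Your treatment of the ``arbitrarily large'' clause via $r\geq q-a-2\delta$ is a correct explicit version of what the paper leaves implicit.
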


\begin{proof}
Let $F>0$ with $\eta\subset N_F(\eta')$ and $ \eta' \subset N_F(\eta)$, and let $\eta'_r$ be such that $\dist(\eta_0, \eta'_r) \leq F$. Choose $\eta_p$ at distance $> F + 2\delta$ %very far 
from $\eta_0$ and $\eta'_r$.
The rectangle in Figure~\ref{fig:rectangles} shows that a point on $\overline{\eta_0 \eta_p}$ at distance more than $F + 2\delta$ from both $\overline{\eta_0\eta'_r}$ and $\overline{\eta_p\eta'_q}$ must be within $2\delta$ of a point on $\overline{\eta'_r\eta'_q}$. 
\end{proof}

\begin{lemma} \label{lem:tri1} Let $\eta, \eta', \eta''$ be as in the statement of Lemma~\ref{lem:flatland}. Then there exist points $\eta_0, \eta'_0, \eta''_0$ at distance $\leq 7\delta$.
\end{lemma}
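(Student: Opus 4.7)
\emph{Plan.} Lemma~\ref{lem:flatland} concerns an ideal geodesic triangle whose sides are the bi-infinite geodesics $\eta,\eta',\eta''$, meeting pairwise at three ideal endpoints which I denote $A,B,C$, with $\eta$ joining $B$ to $C$, $\eta'$ joining $C$ to $A$, and $\eta''$ joining $A$ to $B$. The strategy is to approximate the would-be ``barycenter'' of this ideal triangle by a finite geodesic triangle sitting deep inside it, and then invoke the insize property for geodesic triangles in a $\delta$-hyperbolic space.

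First I would apply Lemma~\ref{lem:tri3} at each ideal vertex to obtain pairs of points at distance $\leq 2\delta$, one on each of the two sides meeting at that vertex, and arbitrarily far out along the asymptotic directions: points $a\in\eta'$ and $a''\in\eta''$ near $A$, points $b\in\eta$ and $b''\in\eta''$ near $B$, and points $c\in\eta$ and $c'\in\eta'$ near $C$. Then I would form the finite geodesic triangle $T$ with vertices $b,c',a''$. Applying the slim triangle property to the auxiliary triangle on $b,c,c'$, whose sides are a subsegment of $\eta$, the short bridge $[c,c']$, and the side $[bc']$ of $T$, shows that $[bc']$ lies in the $\delta$-neighbourhood of $\eta[b,c]\cup[c,c']$; symmetric statements hold for the other two sides of $T$.

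Next I invoke the standard fact that every geodesic triangle in a $\delta$-hyperbolic space admits a ``barycenter'' with pairwise distances at most $4\delta$ among three chosen points, one on each side: there exist $x\in[bc']$, $y\in[c'a'']$, $z\in[a''b]$ with $\dist(x,y),\dist(y,z),\dist(x,z)\leq 4\delta$. Taking $\eta_0\in\eta$, $\eta'_0\in\eta'$ and $\eta''_0\in\eta''$ to be respective nearest points to $x,y,z$, the triangle inequality gives $\dist(\eta_0,\eta'_0)\leq \dist(\eta_0,x)+\dist(x,y)+\dist(y,\eta'_0) \leq \delta+4\delta+\delta=6\delta$, comfortably within the stated $7\delta$ bound, and symmetrically for the other two pairs.

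The main technical obstacle is to ensure that $x,y,z$ actually lie in the interiors of the sides of $T$, well away from the $\leq 2\delta$ bridges $[a,a'']$, $[b,b'']$, $[c,c']$ near the corners, so that the slim-triangle reduction of the second paragraph places them within $\delta$ of $\eta,\eta',\eta''$ respectively (rather than of a bridge segment, which would yield a weaker bound). This is arranged by pushing the Lemma~\ref{lem:tri3} pairs far enough out along each pair of asymptotic rays, since this makes the Gromov products at the vertices of $T$ as large as desired and hence drives the insize center deep into the body of each side, away from all three corner bridges.
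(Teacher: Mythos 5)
Your proposal is correct, but it is organised differently from the paper's argument, so a comparison is worthwhile. The paper also starts from the three $2\delta$-bridges supplied by Lemma~\ref{lem:tri3}, but then builds the geodesic \emph{hexagon} on all six bridge endpoints, subdivides it by three diagonals $\beta,\beta',\beta''$, takes $\eta_0$ on the $\eta$-side within $\delta$ of both adjacent diagonals, and pushes across the hexagon by a case analysis: if the relevant point lands near one of the short bridge sides, the paper simply absorbs the detour into the constant (hence the $3\delta$ and $4\delta$ in its chase), with no need to choose the bridges far out. You instead inscribe a single geodesic triangle $T=(b,c',a'')$ with one vertex on each of $\eta,\eta',\eta''$, quote the insize fact (three internal points, one per side, pairwise within $4\delta$ for $\delta$-slim triangles -- this constant is indeed correct for the slimness convention used in the paper), transfer each internal point to the corresponding $\eta$ via slimness of the small corner triangles, and \emph{exclude} the bad case (landing within $\delta$ of a bridge) by pushing the Lemma~\ref{lem:tri3} pairs far out so that the Gromov products at the vertices of $T$ exceed $3\delta$. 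That exclusion is the one step you assert rather than prove: it is true, but it needs a justification that, say, $[c',a'']$ fellow-travels $\eta'$ while $\dist(c',\eta'')\to\infty$ as $c'$ tends to the ideal vertex not on $\eta''$ -- exactly the kind of divergence statement available in Theorem~\ref{thm:cdiv}/Corollary~\ref{cor:closediverge}, and also implicitly the reason ``$q,r$ arbitrarily large'' is recorded in Lemma~\ref{lem:tri3}. What the two routes buy: yours gives a cleaner, symmetric pairwise bound ($6\delta$ for all three pairs) at the cost of the extra limiting argument; the paper's hexagon subdivision avoids any ``far out'' refinement and any appeal to an insize constant, at the cost of an asymmetric case analysis and looser bookkeeping.
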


\begin{proof} By Lemma~\ref{lem:tri3}, there exist $q,q',r,r',s,s'$ such that
\begin{equation*}
  \dist\big(\eta_r,\eta'_{r'}\big) \  \leq  \  2\delta  \hspace{.5in}
 \dist\big(\eta_q,\eta''_{q'}\big) \ \leq  \  2\delta  \hspace{.5in}
 \dist\big(\eta'_s,\eta''_{s'}\big) \ \leq \  2\delta.  
 \end{equation*}
 Consider the geodesic hexagon with sides $\overline{\eta_q\eta''_{q'}}, \overline{\eta''_{q'}\eta''_s}, \overline{\eta''_s\eta'_{s'}},  \overline{\eta'_{s'}\eta'_{r'}}, \overline{\eta'_{r'}\eta_r}, \overline{\eta_r\eta_q}$ and subdivide it by taking geodesics $\beta, \beta', \beta''$ as illustrated in Figure~\ref{fig:hexagon9d}.
 Reparametrising if necessary, let $\eta_0\in \overline{\eta_q\eta_r}$ be the vertex of an intriangle corresponding to the triangle with sides
 $\beta\overline{\eta_q\eta_r}\beta'$.
 The point $\eta_0$ is at distance $\leq \delta$ from a point $x \in \beta$, and $x$ is at most $3\delta$ away from a point $\eta''_0 \in \eta''$.
 Similarly, $\eta_0$ is at distance $\leq \delta$ from a point $y \in \beta'$, and $y$ is at most $4\delta$ away from a point $\eta'_0 \in \eta'$  (the various possibilities are sketched in Figure~\ref{fig:hexagon9d}).
\end{proof}		

\begin{figure}[h!]
\centerline{\includegraphics[scale=0.6]{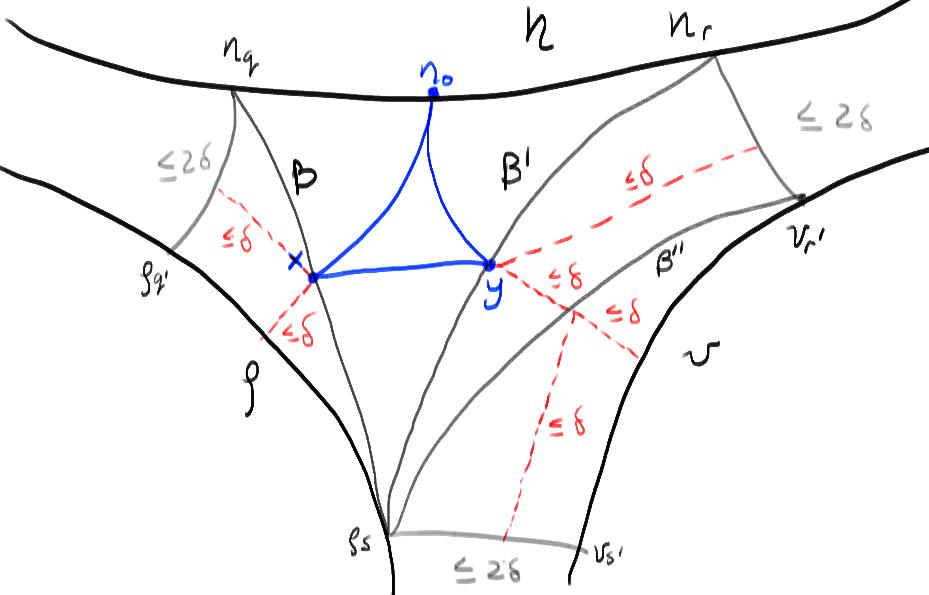}}
\caption{Configuration involved in the proof of Lemma~\ref{lem:tri1}. As the dotted lines indicate, either $y$ is $2\delta$ away from $\eta'$, or $y$ is $2\delta$ away from $\overline{\eta''_s\eta'_{s'}}$, in which case $y$ is $4\delta$ away from $\eta'$, or  $y$ is $2\delta$ away from $\overline{\eta'_{r'}\eta_r}$, in which case $y$ is $4\delta$ away from $\eta'$.}
\label{fig:hexagon9d}
\end{figure}

\begin{lemma} \label{lem:tri2} Let $\eta, \eta'$ be geodesic rays lying in finite neighbourhoods of each other in a $\delta$-hyperbolic geodesic metric space.
If $\dist(\eta_0,\eta'_0) \leq K$ then $\dist(\eta_t,\eta'_t) \leq K'(\delta)=K'$ for all $t\geq 0$.
\end{lemma}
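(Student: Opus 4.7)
The plan is to bound $\dist(\eta_t,\eta'_t)$ via the $\delta$-slim triangle property applied to a geodesic quadrilateral whose ``far'' pair of vertices is provided by Lemma~\ref{lem:tri3}, together with elementary triangle-inequality estimates.

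Fix $t\geq 0$. Using the final clause of Lemma~\ref{lem:tri3}, choose parameters $q,r\geq t$ with $\dist(\eta_q,\eta'_r)\leq 2\delta$. Form the geodesic quadrilateral with vertices $\eta_0,\eta_q,\eta'_r,\eta'_0$, whose four sides have lengths $q$, $\leq 2\delta$, $r$, and $\leq K$, and whose first and third sides are initial segments of $\eta$ and $\eta'$. Subdivide this quadrilateral by a single diagonal into two $\delta$-slim triangles; every point on the side along $\eta$ then lies within $2\delta$ of the union of the other three sides. Note also that the triangle inequality applied to $\eta_0,\eta'_0,\eta_q,\eta'_r$ immediately gives $|q-r|\leq K+2\delta$, so the close pair $(\eta_q,\eta'_r)$ occurs at comparable parameters.

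Apply the slim-quadrilateral property to the point $\eta_t$. There are three cases. If $\eta_t$ is within $2\delta$ of the short side $\overline{\eta_0\eta'_0}$, then $t\leq K+2\delta$, so $\dist(\eta_t,\eta'_t)\leq 2t+K\leq 3K+4\delta$. If $\eta_t$ is within $2\delta$ of the short side $\overline{\eta_q\eta'_r}$, then $\dist(\eta_t,\eta_q)\leq 4\delta$ and $\dist(\eta_t,\eta'_r)\leq 4\delta$; combining with $|q-r|\leq K+2\delta$ yields $|t-r|\leq K+6\delta$, so $\dist(\eta_t,\eta'_t)\leq \dist(\eta_t,\eta'_r)+|r-t|\leq K+10\delta$. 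Finally, if $\eta_t$ is within $2\delta$ of a point $\eta'_s$ on the side along $\eta'$, then the triangle inequality applied at $\eta_0$ gives $t-2\delta\leq K+s$ and $s-K\leq t+2\delta$, so $|t-s|\leq K+2\delta$; hence $\dist(\eta_t,\eta'_t)\leq 2\delta+|s-t|\leq K+4\delta$.

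Taking the maximum of the three bounds gives $\dist(\eta_t,\eta'_t)\leq 3K+10\delta$, so the conclusion holds with $K'=K'(K,\delta):=3K+10\delta$. There is no real obstacle beyond organising the case analysis; the one subtlety worth emphasising is that the existence of arbitrarily large parameters $q,r$ in Lemma~\ref{lem:tri3} is what permits us to build the quadrilateral with $\min(q,r)\geq t$ for \emph{every} $t$, rather than only for bounded $t$.
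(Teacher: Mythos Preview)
Your proof is correct and follows essentially the same strategy as the paper: build a geodesic quadrilateral using Lemma~\ref{lem:tri3}, apply $\delta$-slimness, and do a case analysis on which side $\eta_t$ is close to. The paper's version compresses your three cases into two (absorbing the ``far short side'' case into the case where $\eta_t$ is close to a point of $\eta'$) and obtains the tighter constant $K'=K+4\delta$, whereas your case~(1) uses a cruder triangle-inequality bound and costs you an extra $2K$; but the organisation and the key ideas are the same.
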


\begin{proof}
%We will bound $\dist(\eta_m,\eta'_m)$. 
By Lemma~\ref{lem:tri3}, there are $q,r > m$ with $\dist(\eta_q, \eta'_r)\leq 2\delta$. 
Let $Q$ be the quadrilateral with sides $\overline{\eta_0\eta'_0}$, $\overline{\eta_q\eta'_r}$,  $\overline{\eta_0\eta_q}$ and  $\overline{\eta'_0\eta'_r}$.
We claim that  $d(\eta_n, \eta_m)\leq K +2\delta$ for $\eta_n, \eta_m \in \overline{\eta_0\eta_r}$, similarly, $d(\eta'_n, \eta'_m)\leq K +2\delta$ for $\eta'_n, \eta'_m \in \overline{\eta'_0\eta'_r}$. Indeed: $m\leq K + |\eta_n|+2\delta=K +n+2\delta$, so $m-n\leq K+2\delta$ and $n\leq K + |\eta_m|+2\delta=K +m+2\delta$, so $|n-m|\leq K+2\delta$. Hence $|m-n|=d(\eta_n, \eta_m)\leq K +2\delta$ and similarly for $d(\eta'_n, \eta'_m)$.
Now we will bound $\dist(\eta_m,\eta'_m)$. There are 2 cases. See Figure~\ref{fig:rectangles}.
\begin{enumerate}
\item If there exists $n$ with $d(\eta_m,\eta'_n)\leq 2\delta$, then $\dist(\eta_m,\eta'_m)\leq \dist(\eta_m,\eta'_n)+\dist(\eta'_n,\eta'_m)\leq 2\delta+K+2\delta$. The same holds by a symmetric argument if there exists $n$ with $d(\eta'_m,\eta_n)\leq 2\delta$.
\item Otherwise, both $\eta_m$ and $\eta'_m$ are within distance $2\delta$ of $\overline{\eta_0\eta'_0}$. Since $|\overline{\eta_0\eta'_0}|=K$, it follows that $\dist(\eta_m,\eta'_m)\leq 2\delta +K + 2\delta$.
\end{enumerate}
Either way, $d(\eta_, \eta'_m)\leq K +4\delta:=K'.$
\end{proof}

\begin{figure}[h!]
\centerline{\includegraphics[scale=0.6]{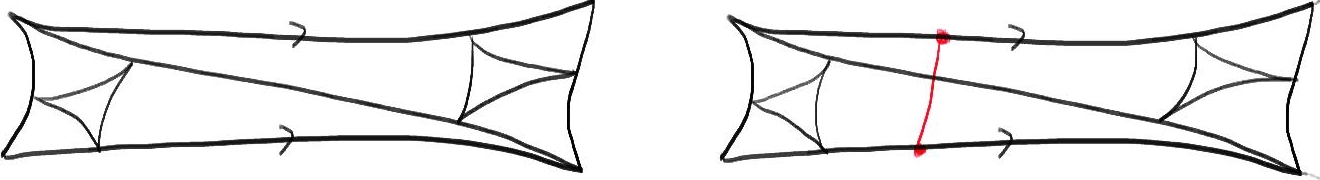}}
\caption{Rectangles used in the proofs of Lemma~\ref{lem:tri2} and Lemma~\ref{lem:tri3}.}
\label{fig:rectangles}
\end{figure}

\begin{lem}[$\delta$-ideal triangles]\label{lem:flatland}
Let $\widetilde X$ be a $\delta$-hyperbolic 
metric space.
Let $\eta, \eta',\eta''$ be bi-infinite geodesics in $\widetilde X$, that form an ideal triangle in the sense that
there are three points $\{a,b,c\}\subset \boundary X$ with
 $\boundary \eta = \{b,c\}$ and $\boundary \eta'=\{a,c\}$ and $\boundary \eta'' = \{a,b\}$.
 
 There exist isometric reparametrisations  of $\eta$, $\eta'$, and $\eta''$   with:
 \begin{equation*}
 \begin{aligned}
 \dist\big(\eta_t,\eta'_t\big)  &\leq & K'  \ \ & \ \text{ for } \ \ t\geq 0\\
 \dist\big(\eta_{-t},\eta''_{-t}\big) &\leq & K' \ \ & \ \text{ for } \ \  t\geq 0\\ 
 \dist\big(\eta'_{-t},\eta''_t\big) &\leq & K'  \ \ & \ \text{ for }  \ \ t \geq 0\\
\end{aligned}
 \end{equation*}
\end{lem}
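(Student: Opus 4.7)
The plan is to deduce Lemma~\ref{lem:flatland} by combining Lemmas~\ref{lem:tri1} and~\ref{lem:tri2}. First orient the three bi-infinite geodesics so that $\eta_t \to c$ and $\eta_{-t} \to b$; $\eta'_t \to c$ and $\eta'_{-t} \to a$; and $\eta''_t \to a$ and $\eta''_{-t} \to b$, all as $t \to +\infty$. These are the orientations dictated by the three claimed inequalities, since in each one the two indicated rays are required to converge to a common boundary point (respectively $c$, $b$, and $a$).

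Next apply Lemma~\ref{lem:tri1} to produce three points on $\eta$, $\eta'$, $\eta''$ that are pairwise within $7\delta$. A direct inspection of the proof of that lemma shows that the three points it produces are aligned consistently with the chosen orientations, in the sense that the subarc of $\eta$ heading toward $c$ is the one that shadows the corresponding subarc of $\eta'$ heading toward $c$, and analogously for the other two pairs. Translate the parametrization of each geodesic isometrically so that these three points become $\eta_0$, $\eta'_0$, $\eta''_0$. Now each of the three pairs of rays named above has basepoints at distance $\leq 7\delta$.

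Finally, for each of the three pairs, the two rays converge to the same point of $\partial \widetilde X$, so by the definition of the Gromov boundary in Section~\ref{sec:class} they are equivalent and thus lie in finite neighborhoods of one another. This verifies the hypothesis of Lemma~\ref{lem:tri2}, which applies with $K = 7\delta$ and produces the uniform fellow-traveling constant $K' = 7\delta + 4\delta = 11\delta$ for each pair. After substituting $t \mapsto -t$ to convert the backward rays of $\eta$, $\eta'$, and $\eta''$ into positive-time rays as demanded by Lemma~\ref{lem:tri2}, these three fellow-traveling statements translate directly into the three inequalities in the conclusion. The main (admittedly minor) obstacle is the consistency check in the middle paragraph: one must verify that the points output by Lemma~\ref{lem:tri1} are aligned with the chosen orientations, so that ``the forward ray of $\eta$'' genuinely pairs with ``the forward ray of $\eta'$'' as both head toward $c$, rather than producing a mismatch in which $\eta$ fellow-travels one ray in one direction and another in the opposite direction.
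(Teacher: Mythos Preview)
Your proposal is correct and follows essentially the same approach as the paper: apply Lemma~\ref{lem:tri1} to find basepoints pairwise within $7\delta$, then apply Lemma~\ref{lem:tri2} with $K=7\delta$ to each of the three pairs of asymptotic rays. The paper's proof is terser and does not spell out the orientation bookkeeping or the explicit value $K'=11\delta$, but the argument is the same.
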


The sides of a $\delta$-ideal triangle are called \emph{verticals}.

\begin{defn}\label{defn:intriangle}
Let 
%In the context of Lemma~\ref{lem:flatland}, regard
 $\{\eta_0, \eta'_0, \eta''_0\}$ be as in Lemma~\ref{lem:flatland}, 
 an %as vertices of an 
 \emph{intriangle} is the set $\Lambda$ determined by three geodesics arcs $s, s', s''$ --called \emph{sides}-- with endpoints  $\{\eta_0, \eta'_0, \eta''_0\}$. 
See Figure~\ref{fig:insize}.
%We emphasize that   diameter$\{\eta_0, \eta'_0, \eta''_0\} \leq 7\delta$.
% by  Lemma~\ref{lem:flatland} %and from Lemma~\ref{} 
\end{defn}
\begin{proof}[Proof of Lemma~\ref{lem:flatland}]
%The proof relies on three simple Lemmas  presented below.
By Lemma~\ref{lem:tri1}, there are points $\eta_0 \in \eta, \eta'_0 \in \eta', \eta''_0 \in \eta''$ at distance $\leq 7\delta$.
Hence by Lemma~\ref{lem:tri2} with $K=7\delta$ we have:
 \begin{equation*}
 \begin{aligned}
 \dist\big(\eta_t,\eta'_t\big)  &\leq & K'  \\
 \dist\big(\eta_{-t},\eta''_{-t}\big) &\leq & K' \\ 
 \dist\big(\eta'_{-t},\eta''_t\big) &\leq & K'   \\ 
 \end{aligned}\qedhere
 \end{equation*}
\end{proof}

\begin{figure}%[H]
\centerline{\includegraphics[scale=0.64]{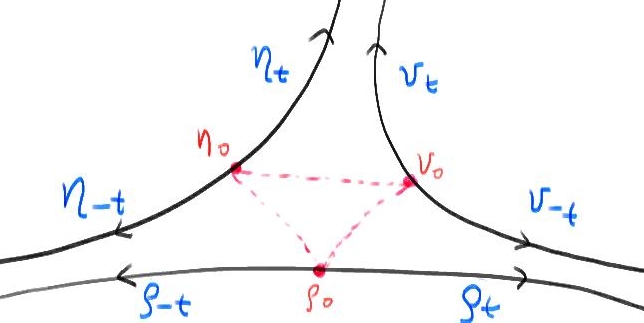}}
\caption{The configuration described in Lemma~\ref{lem:flatland}.}
\label{fig:insize} 
\end{figure}

\subsection{The ideal retriangulation} \label{subsec:retriangulate}

\begin{notation}
To simplify notation we let $\chi=\chi(S)$. This is non-ambiguous: we will modify and simplify $S$ in various ways --but Euler characteristic remains invariant under all of these modifications.
\end{notation}

\begin{construction}[$\delta$-ideal triangulation]\label{cnst:idealtri} 
Let $S \rightarrow X$ be a genus~$g$ diagram with $n$ boundary components $C_1, \dots, C_n$
% that map essentially to $X$ and that do not represent torsion elements in $\pi_1X$, 
such that each $C_i \rightarrow X$ is essential.%whose universal covers embed  as uniform quasigeodesics $\widetilde{C}_i \subset \widetilde{X}$.
 We shall construct an associated ``infinite diagram" 
 $S^\delta\rightarrow X$ whose universal cover is built from geodesic $\delta$-ideal triangles $\{\widetilde{\triangle^\delta}_j\}$.

We now recall the compact genus~$g$ surface $\bar{S}$ that deformation retracts to $S \subset \bar{S}$, and this deformation retraction sends each boundary circle $\bar{C_i}$ of $\bar{S}$ to the corresponding $C_i$. We regard $\bar{S}$ as having a complete hyperbolic metric. The interior of $\bar{S}$ can be decomposed into geodesic ideal triangles $\{\triangle_j\}$  meeting in pairs, and each ray of each geodesic side limits to some $\bar{C_i}$. Indeed, it suffices to find a triangulation of a closed genus~$g$ surface with exactly $n$ vertices, replace the triangles by ideal triangles, and glue corresponding sides with a displacement, so that the sum of all displacements around a given ideal vertex corresponds to the length of the desired boundary circle. 

%Moreover, these ideal triangles can be taken to be geodesic ideal triangles by endowing $S$ with a complete hyperbolic metric. 
An Euler characteristic calculation %(viewing the boundary circles as vertices of a triangulation of $S^2$)
shows that $\#\{\triangle_j\}=2|\chi|=2(2g+n-2)$ and so there are $3(2g+n-2)$ sides in the ideal triangulation.

Composing the deformation retraction $\bar{S} \rightarrow S$ with the map $S \rightarrow X$ yields a map $\widetilde{\bar{S}} \rightarrow \widetilde{X}$. Each $\widetilde{C_i}$  maps to a periodic path in $\widetilde{X^1}$ which is a quasigeodesic and thus each $\widetilde{\bar{C_i}}$ maps to that quasigeodesic.
Each side $s$ maps to a quasigeodesic $\tilde s$ in $\widetilde{X}$ under the induced map $\widetilde{S} \rightarrow \widetilde{X}$. 

If the endpoints of $\tilde s$ coincide in $\partial \widetilde{X}$, then $\tilde s$ is null-homotopic, and so $s$ is null-homotopic. This contradicts the assumption that $S$ has no null-homotopic circles.
Therefore we may assume that the endpoints of $\tilde s$  in $\partial \widetilde{X}$ are distinct. This ensures the existence of a geodesic $\tilde{\gamma}$ with the same endpoints as $\tilde s$ in $\partial \widetilde{X}$.  

Choose a geodesic $\tilde{\gamma}$ with the same endpoints as $\tilde s$ in $\partial \widetilde{X}$.  
For each lift $\widetilde \triangle_j$ of an ideal triangle in $\{\triangle_j\}$, a choice of a triple of geodesics $(\tilde{\gamma}_1, \tilde{\gamma}_2, \tilde{\gamma}_3)$ defines a \emph{$\delta$-ideal triangle} $\widetilde{\triangle^\delta}_j$ in $\widetilde X$ as in Lemma~\ref{lem:flatland}. Repeating this procedure for each lift of each $\triangle_j$ yields a set of $\delta$-ideal triangles $\{\widetilde\triangle^\delta_j\}$ associated to $\{\widetilde \triangle_j\}$.  
\end{construction}

\begin{figure}[h!]
\centerline{\includegraphics[scale=0.6]{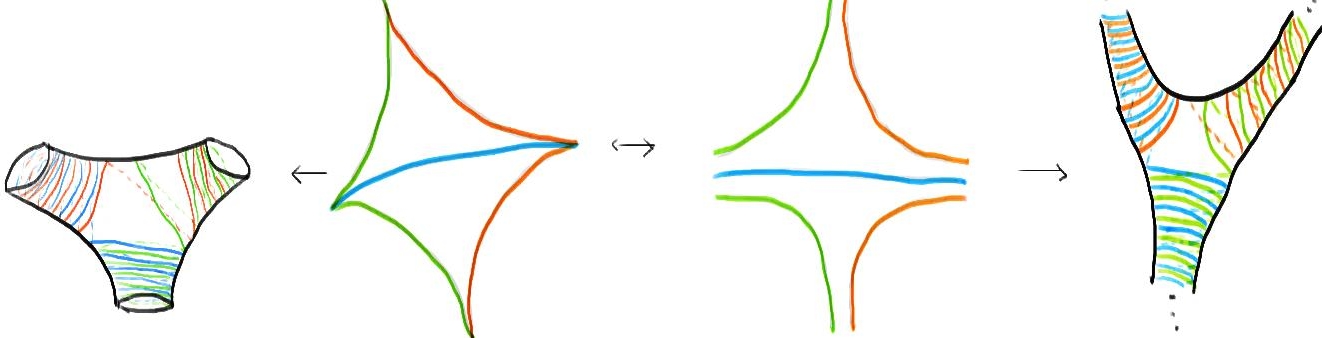}}
\caption{Heuristics of Construction~\ref{cnst:idealtri} for a ``pair of pants" genus~0 diagram.}
\label{fig:pants}
\end{figure}

\begin{remark}[Choices] 
The construction described above is not canonical: the topological ideal triangulation of $S$ is not unique, and therefore neither is the induced $\delta$-ideal triangulation, nor the intriangles. For the remainder of the paper, we assume that one such choice has been made and is kept throughout.
\end{remark}

\begin{construction}[$\delta$-ideal triangles in $\widetilde X$]\label{const:infinite}
Let $\Lambda$ be a disc diagram whose sides are geodesics with endpoints at $v_0, v_1, v_2$. Attach an infinite rectangular disc diagram $D_j$ at each side, where $D_j$ is obtained via the following procedure.

Let $r=e_1e_2\cdots$ and $r'=e_1'e_2'\cdots$ be geodesic rays representing the same points in $\partial \widetilde X$. Let $\mu_0$ be a geodesic joining their initial points. For each $i$ let $\mu_i$ be a geodesic joining $r(i)$ and $r'(i)$. Let $R_i$ be a disc diagram for $\mu_ie_i\mu_{i+1}^{-1}e_{i+1}^{-1}$. Let $D=\cup_i R_i$. Then $\Lambda \cup_j D_j \rightarrow \widetilde X$ maps to a $\delta$-ideal triangle in $\widetilde X$.
\end{construction}

\begin{definition}
Choose basepoints $p_{ij} \in s_{ij} \subset \triangle_j$ for each $i \in \{1,2,3\}$ and $j\in \{1, \dots, 2(n-2)\}$. The map  $\cup_j \widetilde{\triangle}_j \rightarrow \cup_j \widetilde{\triangle^\delta}_j$ that assigns a geodesic $\tilde \gamma$ to each $s$, sends the $p_{ij}$ to basepoints $\hat p_{ij}$.  Therefore the triangles of
 $\{\widetilde{\triangle^\delta}_j\}$ can be glued together unambiguously to form $S^\delta = (\sqcup \widetilde{\triangle^\delta}_j) / \sim$.
 
Explicitly, if $\{\gamma_k\}_{k \in K}$ are the various verticals and $\phi_{k0}:\gamma_k \rightarrow  \triangle^\delta_{i(k0)}$ and $\phi_{k1}:\gamma_k \rightarrow \triangle^\delta_{i(k1)}$ are the two identifications between $\gamma_k$ and the sides of $\delta$-ideal triangles, then we have:
\begin{equation}\label{eq:gluingtriangles}
S^\delta = (\sqcup \widetilde{\triangle^\delta}_j) \ / \ \{\phi_{k0}(x)\sim \phi_{k1}(x) \ : \ k \in K \text{ and }  x \in \gamma_k\}
\end{equation}

\end{definition}

\begin{remark} There is a niggling difference between a hyperbolic ideal triangle and its corresponding $\delta$-ideal triangle. While the former is homeomorphic to a disc minus 3 points, the latter could be singular and even contain infinitely many cut points.
 In the extreme case where $\widetilde X$ is a tree, every $\delta$-ideal triangle is an infinite tripod.

Because of these cut points, it is a priori possible that the quotient space $S^\delta = (\sqcup \widetilde{\triangle^\delta}_j) / \sim$ is not homeomorphic to a surface. See the right of Figure~\ref{fig:buffer}.  

This possibility is not directly covered by the method in Section~\ref{subsec:horizontalpt}, and it will be convenient to adopt a workaround. For this purpose we ``buffer" $S^\delta$ in a way that avoids singularities and actually maintains the homeomorphism type of the interior of $S$.
\end{remark}

\newcommand{\D}{O}

\begin{construction}[Buffer]\label{cnst:buffer}

For each vertical $\gamma_k$, define the \emph{buffer}:

\begin{equation*}
\gamma_k\times I \ \cong \ \big ((\gamma_k\times[0,\frac{1}{2}])\sqcup ([\frac{1}{2},1]\times \gamma_k)\big ) \ \big / \ \big \{(x,\frac{1}{2})\sim (\frac{1}{2},x) : \forall x \in \gamma_k \big \}
\end{equation*}

Thus, the buffer of $\gamma_k$ is essentially the product $\gamma_k\times [0,1]$ whose cells have been subdivided by adding $\gamma_k\times\{\frac{1}{2}\}$ to the 1-skeleton.

We create a new diagram by taking a buffer for each $k \in K$, and identifying its ``left" and ``right" sides with the sides of $\delta$-ideal triangles that $\gamma_k$ maps to in $S^\delta$.

Explicitly, we replace Equation~\eqref{eq:gluingtriangles} with the following:
\begin{equation*}
S^\times = (\bigsqcup_j \widetilde{\triangle^\delta}_j \sqcup \bigsqcup_k (\gamma_k \times I)) \ / \ \{\phi_{k0}(x)\sim (x,0), \ \phi_{k1}(x)\sim (x,1) \ : \ k \in K \text{ and }  x \in \gamma_k\}
\end{equation*}

The diagram $S^\times$ is non-singular, and decomposes as a union of ideal triangles and buffers. 

By relaxing the condition of diagrams in $X$ to allow the map $S^\times \rightarrow X$ to be cellular instead of combinatorial, we obtain a  genus~$g$ surface with $n$ punctures in $X$ via the composition $S^\times \rightarrow S^\delta \rightarrow X$, where the map $S^\times \rightarrow S^\delta $ is the cellular map that collapses each buffer via projection to the first factor to a vertical $\gamma$ in $S^\delta$.

While it might seem counterproductive to replace a diagram with one that has larger area, the reader should keep in mind that $S^\times$ is only an accessory, and may have  little relation with the diagram finally obtained in Section~\ref{subsec:horizontalpt}. 
\end{construction}

\begin{figure}[h!]
\centerline{\includegraphics[scale=0.45]{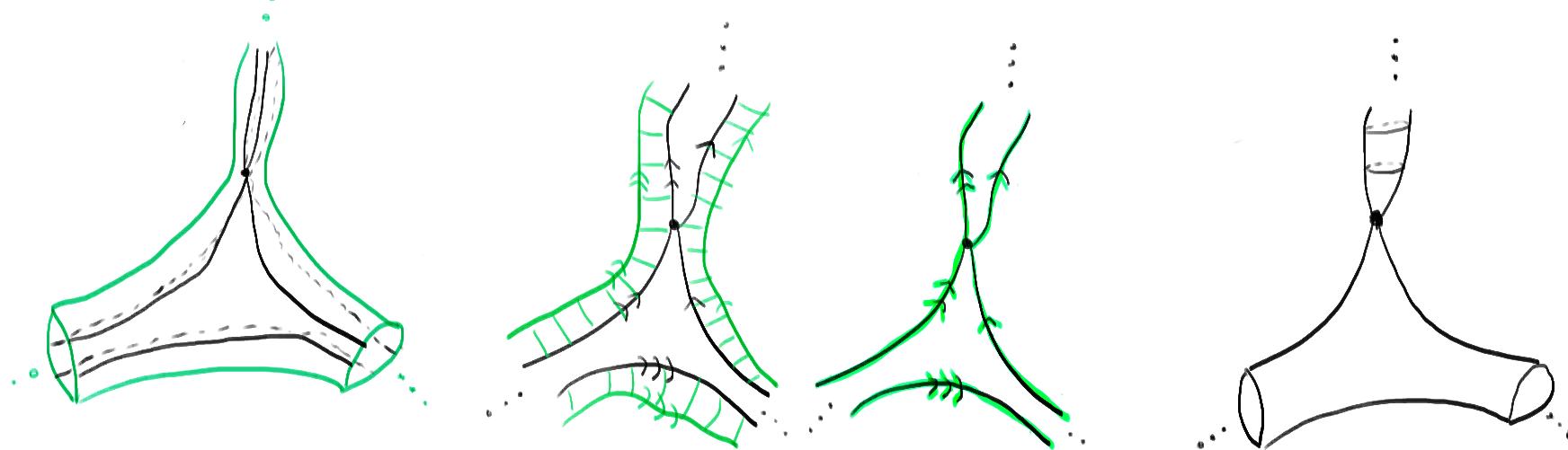}}
\caption{At the left is the ``buffered" diagram with the buffer drawn in green. At the centre are the ideal triangles. At the right is the original singular diagram.}
\label{fig:buffer}
\end{figure}

\section{Jumps, horizontal paths, and bands}\label{subsec:horizontalpt}

\subsection{The graph of spaces decomposition}

Let $\Lambda_j^\times \subset S^\times$ be the \emph{buffered intriangle} obtained from $\Lambda_j$ by attaching the three half edges whose initial vertices are the vertices of $\Lambda_j$ and whose terminal vertices lie on $\{\gamma_i\}$. 
See Figure~\ref{fig:bufferednons}. Let $D_j^i$ be the rectangular strips attached to $\Lambda^\times$ via Construction~\ref{const:infinite}, where $i \in \{1,2,3\}$. 
Then $\cup_{i,j}D_j^i$ decomposes as a graph of spaces, where each vertical $\gamma_i\times \{\frac{1}{2}\}$ is a vertex space and each $D_j^i$ is an edge space.  
  $S^\times$ can then be recovered by adding the buffered intriangles to $\cup_{i,j}D_j^i$, and so  $\Gamma$ is naturally associated with $S^\times$, despite only corresponding to a decomposition of a subspace of $S^\times$.

Since the $\{\gamma\times \{\frac{1}{2}\}\}$ are in correspondence with the buffers, $\Gamma$ has a vertex $v_\gamma$ for each $\gamma\times \{\frac{1}{2}\}$ and an edge $e_J$ for each parallelism class of jumps $[J]$. Hence $\#V(\Gamma)=3|\chi|$ and $\#E(\Gamma)=6|\chi|$.

\begin{remark}
When all ideal triangles of $S^\delta$ are non-singular, the graph of spaces decomposition can be defined analogously without having to utilise $S^\times$. However, the above considerations are necessary when there are singularities -- for example, when the ideal triangles are tripods as in Figure~\ref{fig:bufferednons}. 
\end{remark}

\begin{figure}[h!]
\centerline{\includegraphics[scale=0.56]{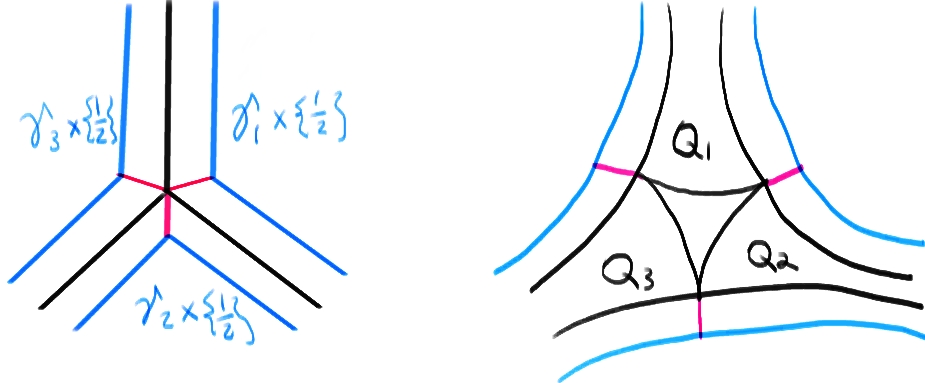}}
\caption{A buffered tripod, and a buffered ideal triangle with the $Q_i$ described in the graph of spaces decomposition.}
\label{fig:bufferednons}
\end{figure}

\begin{definition}
A \emph{trajectory} is a path $T \rightarrow \Gamma$. 
It is \emph{semi-embedded} if it traverses every edge at most once in each direction. 
\end{definition}

A finite graph has finitely many semi-embedded trajectories. In fact, a coarse bound is straightforward:

\begin{lemma}\label{lem:numberofsemis} $\#\{T \rightarrow \Gamma : T \text{ is semi-embedded}\}\leq (12|\chi|+1)!$.
\end{lemma}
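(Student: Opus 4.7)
The plan is a straightforward counting argument using the combinatorial data about $\Gamma$ recorded just before the statement. Since $\Gamma$ has $3|\chi|$ vertices and $6|\chi|$ edges, it has $n := 12|\chi|$ directed edges (each edge contributing one per orientation). The semi-embedded condition says precisely that no directed edge is repeated, so a semi-embedded trajectory is nothing more than a basepointed walk recorded as an ordered sequence of pairwise distinct directed edges, together with a starting vertex when the sequence is empty.

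First I will partition trajectories by length. The length-zero trajectories contribute $3|\chi|$ possibilities, one per vertex. For each $k\geq 1$, a length-$k$ trajectory is determined by its sequence of directed edges alone (the starting vertex is then the tail of the first edge), so the count of length-$k$ trajectories is at most the number of ordered $k$-tuples of distinct directed edges, namely $n!/(n-k)!$, since dropping the head-to-tail compatibility constraint between consecutive edges only inflates the count.

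Next I sum and bound:
\[
3|\chi| \ + \ \sum_{k=1}^{n} \frac{n!}{(n-k)!} \ = \ 3|\chi| \ + \ n!\sum_{j=0}^{n-1}\frac{1}{j!} \ < \ 3|\chi| + e \cdot n! \ \leq \ (n+1)!,
\]
where the final inequality is comfortable since $n+1 \geq 13$ (as $|\chi|\geq 1$ throughout the cases still under consideration) while $3|\chi| \leq n/4$. Substituting $n = 12|\chi|$ yields the claimed bound $(12|\chi|+1)!$.

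There is no real obstacle here; the only choice is how loose to make the counting. A slicker but essentially equivalent route would be to inject the set of semi-embedded trajectories into the symmetric group on $n+1$ letters by sending a length-$k$ trajectory $(d_1,\dots,d_k)$ to the permutation $(d_1,\dots,d_k,\star,\text{remaining directed edges in a fixed order})$, with $\star$ serving as an end-of-trajectory marker; this produces the factorial bound at one stroke but is slightly less transparent.
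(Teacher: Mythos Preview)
Your proof is correct and follows essentially the same approach as the paper's: both count semi-embedded trajectories by bounding the number of ordered sequences of distinct oriented edges, using that $\Gamma$ has $12|\chi|$ oriented edges. The paper's version is terser (``$12|\chi|$ choices for the first edge, $12|\chi|-1$ for the second, and so on''), while you make the summation over lengths explicit, separately account for length-zero trajectories, and verify the final inequality; but the underlying idea is identical.
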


\begin{proof}

Let $\mathcal{O}$ be the set of oriented edges of $\Gamma$, so  $\#\mathcal{O}=2\#E(\Gamma)=12|\chi|$. No oriented edge can occur more than once in a semi-embedded trajectory, hence there are $12|\chi|$ choices for the first edge of such a trajectory, $12|\chi|-1$ choices for the second edge, and so on.
\end{proof}

\begin{notation} In what follows all paths and diagrams map to $S^\times$. Hence, to simplify notation, we write $\gamma$ in place of $\gamma \times \{\frac{1}{2}\}$ and $\Lambda$ in place of $\Lambda^\times$.
\end{notation}

\subsection{Jumps and horizontal paths}\label{subsec:hp}

We now describe ``horizontal paths" in $S^\times$ that serve as combinatorial analogues to a ``horocyclic flow". These will allow us to control the length of certain geodesic segments in $S^\delta$ and afterwards also in the ``trimmed" genus~$g$ diagram $\Sigma$. As geodesics are not unique, the definition is involved.%it requires a bit of extra care to define these horizontal paths rigorously.

\begin{definition}[Jumps]\label{def:jump}
A \emph{jump} is a geodesic arc $J$ whose corresponding trajectory $T_J \rightarrow \Gamma$ has length $1$.
\end{definition}

Two jumps $J,J'$ are \emph{opposite} if 
they lie in the same buffered ideal triangle,
and the terminal vertex of $J$  is the initial vertex of $J'$
(so the initial of $J$ is the terminal of $J'$). 
 Two jumps $J,J'$ are \emph{parallel} if 
their corresponding trajectories $T_J$ and $T_{J'}$ are equal. 
This is equivalent to saying that $J$ and $J'$ lie in the same ideal triangle and their initial vertices lie in the same vertical $\gamma$ but are not separated by either vertex of an intriangle on $\gamma$. 
We refer to  Figure~\ref{fig:lrpaths}.

\begin{definition}[Horizontal Paths]\label{def:flow}

A \emph{ horizontal  path} $h$ is a path in $S^\times$ that is a concatenation of jumps
such that:
Firstly, $h$ has no \emph{backtrack} 
consisting of a pair of consecutive jumps that are opposite.
Secondly, no two jumps of $h$ have the same initial point,
and no two jumps of $h$ have the same terminal point.

The \emph{jump length} of a horizontal path $J_1\cdots J_N$  equals the number $N$ of jumps. 
\end{definition}

\begin{figure}[h!]
\centerline{\includegraphics[scale=0.54]{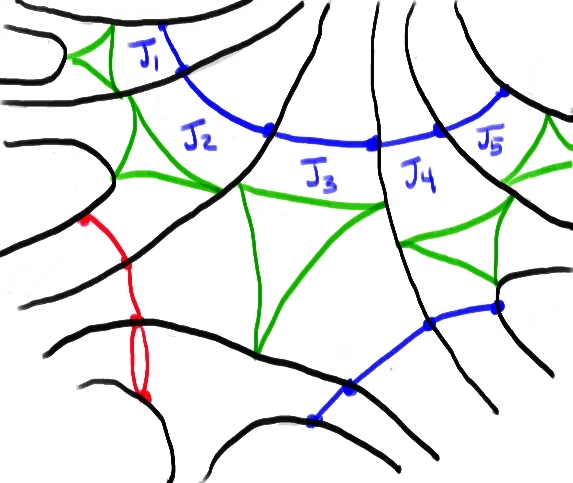}}
\caption{Two horizontal paths in the universal cover, and a path with a backtrack (in red).}
\label{fig:lrpaths}
\end{figure}

Let $\iota(h)$ and $\tau(h)$ denote the sets of initial and terminal points of jumps of $h$, respectively.
The horizontal paths $h, g$ are  \emph{equivalent} if 
 $\iota(h) = \iota(g)$ and $\tau(h) = \tau(g)$.

\subsection{Trimming the surface}

\begin{definition}

A \emph{$\boundary$-return} is a horizontal path 
 $h=J_1 J_2\cdots J_n$ such that:
\begin{enumerate}
  \item $T_h$ is closed and semi-embedded 
  \item there is a vertical $\gamma_i$ 
   and a $\lambda_i\subset \gamma_i$ with the same endpoints as $h$
   \item the concatenation $h\lambda_i$ is a cycle that separates $S^\delta$ into two  components 
   \item all intriangles of $S^\delta$ lie in the same component.
\end{enumerate} 
An \emph{augmented $\boundary$-return} is a concatenation $h\lambda_i$ as above.
\end{definition}

Observe that $S^\times$ contains a surface $\bar{S}^\times$ such that 
$S^\times - \bar{S}^\times$ consists of $n$ open cylinders $\sqcup  \mathcal{L}_i$,
which we refer to as the ``cusps'' of $ S^\times$. 

\begin{lemma}\label{lem:partialexists}
$\partial$-returns exist.
\end{lemma}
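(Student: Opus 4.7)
The plan is to construct a $\partial$-return by tracing once around a chosen cusp $\mathcal{L}_i$ of $S^\times$, using the asymptotic closeness of verticals that share an ideal vertex. First I identify the local combinatorial structure at cusp $i$: traversing the $i^{\text{th}}$ ideal vertex of $\bar S$ in cyclic order produces a sequence of verticals $\gamma^{(1)}, \dots, \gamma^{(m)}$, where $\gamma^{(m+1)} := \gamma^{(1)}$, together with buffered $\delta$-ideal triangles $\Lambda_{j_1}^{\times}, \dots, \Lambda_{j_m}^{\times}$ such that $\gamma^{(k)}$ and $\gamma^{(k+1)}$ are the two sides of $\Lambda_{j_k}^{\times}$ meeting at the $i^{\text{th}}$ ideal vertex in $S^\delta$.

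By Lemma~\ref{lem:flatland}, I reparametrise each pair $(\gamma^{(k)}, \gamma^{(k+1)})$ so that their cusp-$i$ ends track within $K'$ of each other at all depths. Since there are only finitely many intriangles, I can choose a depth $t_0$ so large that on each $\gamma^{(k)}$ the point at depth $t_0$ lies beyond every intriangle vertex along $\gamma^{(k)}$. For each $k$ I then select points $p_k \in \gamma^{(k)}$ past depth $t_0$ and a geodesic jump $J_k$ in $\Lambda_{j_k}^{\times}$ from $p_k$ to $p_{k+1}$ lying in the cusp-$i$ parallelism class (the one that stays on the cusp side of $\Lambda_{j_k}$ rather than passing the intriangle vertex opposite to cusp $i$). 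Setting $h = J_1 J_2 \cdots J_m$, the endpoints $p_1, p_{m+1}$ both lie on $\gamma^{(1)}$, and I let $\lambda_i$ be the segment of $\gamma^{(1)}$ joining them.

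It then remains to verify the four defining conditions of a $\partial$-return. The trajectory $T_h$ in $\Gamma$ uses each edge $e_{J_k}$ at most once because the cusp-$i$-side parallelism class of each $\Lambda_{j_k}^{\times}$ is attached to a specific ordered pair of sides sharing the $i^{\text{th}}$ vertex, and the cyclic walk around cusp $i$ uses each such pair only once; hence $T_h$ is semi-embedded, and it is closed since both its endpoints lie on $\gamma^{(1)}$. No pair of consecutive jumps $J_k, J_{k+1}$ is opposite, since they lie in distinct buffered triangles (or, in degenerate cases where a single triangle contributes several vertices to cusp $i$, between distinct pairs of its sides). Finally, because $t_0$ was chosen deeper than every intriangle vertex along each vertical, the jumps are pairwise disjoint except at shared endpoints and the cycle $h\lambda_i$ is an embedded simple closed curve contained in a thin collar of $\mathcal{L}_i$; by the surface Jordan curve theorem it separates $S^\delta$ into two components, the bounded one lying inside $\mathcal{L}_i$ and the other containing $\bar S^\times$ together with all intriangles.

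The main obstacle is verifying that $h\lambda_i$ is embedded and genuinely separates $S^\delta$ with all intriangles on one side. The degenerate cases---an ideal triangle with multiple vertices at cusp $i$, or a vertical with both ends at cusp $i$---require careful bookkeeping of the cyclic sequence of half-edges around cusp $i$ and of the corresponding parallelism classes, but in every case the large-depth choice of $t_0$ keeps all jumps well inside $\mathcal{L}_i$ and disjoint from each other, preserving both semi-embeddedness and the separation property.
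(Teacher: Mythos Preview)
Your proposal is correct and follows essentially the same idea as the paper: fix a cusp, take jumps in the parallelism classes of the rectangular strips entering that cusp, and concatenate them into a horizontal path whose endpoints lie on a common vertical. The only difference is in how semi-embeddedness is secured: you traverse the full cyclic link of the cusp and verify directly that each parallelism class is used once, whereas the paper simply takes a \emph{minimal} such concatenation returning to the same vertical and observes that minimality forces the trajectory to be semi-embedded; this shortcut also spares the paper the bookkeeping for the degenerate cases you flag.
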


\begin{proof}

For each edge space $D_i$ travelling into a fixed cusp $L$, choose a jump $J_i$ in the parallelism class of $D_i$. Furthermore, choose the $J_i$ so that they are concatenable (i.e., $J_i(1)=J_{i+1}(0)$ for each $i \in I$). This can always be attained by ``pushing" the $J_i$ away from $\Lambda_i$.

Let $h=J_1\cdots J_k$ be such a concatenation satisfying $J_1(0), J_k(1) \in \gamma$ for some vertical $\gamma$, and where $k$ is the minimal jump length $k$ for which such a horizontal path exists. Then $T_h$ is semi-embedded since $k$ is minimal, and $T_h$ is closed since $J_1(0), J_k(1) \in \gamma$.
Moreover, all intriangles of $S^\times$ lie on the same component of $S^\times - h$ since the  $D_i$  all travel into $L$.
\end{proof}

\begin{remark}\label{rem:semi} 
Let $T_h \rightarrow \Gamma$ be a semi-embedded trajectory, then $|T_h|\leq 12|\chi|$. Indeed,  a semi-embedded trajectory has length at most twice the number of edges, i.e., $2 \cdot 6|\chi|$. 
\end{remark}

Since $\partial$-returns project to semi-embedded trajectories, we conclude that:

\begin{corollary}\label{lem:oneside} Let $h$ be a $\partial$-return, then $|h|\leq 12|\chi|$. 
\end{corollary}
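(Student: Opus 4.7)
The proof should be essentially a one-line deduction from the preceding Remark~\ref{rem:semi}, since almost all the work has already been done. The plan is to unwind the definitions and observe that the jump length of $h$ coincides with the edge-length of its projected trajectory $T_h$ in $\Gamma$, then invoke the bound on semi-embedded trajectories.

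More precisely, I would first write $h = J_1 J_2 \cdots J_N$, so that by definition the jump length $|h|$ equals $N$. Since each jump $J_i$ has, by Definition~\ref{def:jump}, a corresponding trajectory of length $1$ in $\Gamma$, the projected trajectory $T_h$ is the concatenation $T_{J_1} T_{J_2} \cdots T_{J_N}$ and hence has edge-length exactly $N$. Thus $|h| = |T_h|$.

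Next, I would use the fact that $h$ is a $\partial$-return, which by definition requires $T_h$ to be semi-embedded. By Remark~\ref{rem:semi}, any semi-embedded trajectory in $\Gamma$ has length at most twice the number of edges of $\Gamma$. Since the graph of spaces decomposition gives $\#E(\Gamma) = 6|\chi|$, we conclude $|T_h| \leq 12|\chi|$, and combining with $|h| = |T_h|$ yields $|h| \leq 12|\chi|$.

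There is no real obstacle here: all of the substantive content is packaged in the definition of semi-embedded trajectory, in the count $\#E(\Gamma) = 6|\chi|$ established in the graph of spaces subsection, and in Remark~\ref{rem:semi}. The only thing to verify carefully is that the correspondence $h \mapsto T_h$ preserves length, which follows immediately from the fact that a jump projects to a single edge of $\Gamma$.
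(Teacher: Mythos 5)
Your proposal is correct and follows exactly the paper's reasoning: a $\partial$-return has, by definition, a semi-embedded trajectory $T_h$ whose length equals the jump length of $h$, so Remark~\ref{rem:semi} (via $\#E(\Gamma)=6|\chi|$) immediately gives $|h|\leq 12|\chi|$. This is the same one-line deduction the paper intends, so nothing further is needed.
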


We use the notation $h_x$ below
to indicate a horizontal path having $x$ as an initial or terminal vertex of some jump.

\begin{construction}[Trimming $S^\times$]\label{ct:trim}
We now describe how to obtain $\Sigma$ from $S^\times$. 

For each $\mathcal{L} \in \{\mathcal{L}_i\}$ choose a point $x$ in a vertical $\gamma$ such that there is an augmented  $\partial$-return $h_{x}\lambda$  where $\lambda$ has endpoints $x,y$, and satisfying the following:

\begin{enumerate}

\item \label{cond1}$x$ is a vertex of an intriangle in $S^\times$,
\item \label{cond2} $h_x\lambda$ separates $\cup_\ell \Lambda_\ell-\{x\}$ from $\mathcal{L}$, 
 \end{enumerate}

In view of Lemma~\ref{lem:partialexists}, condition~\ref{cond2} can always be attained. Condition~\ref{cond1} can be achieved by ``pushing down" a $\partial$-return that satisfies Condition~\ref{cond2} until it intersects an intriangle at a vertex.

Let $\{h_{x_1}\lambda_1, \ldots, h_{x_n}\lambda_n\}$ be the augmented $\partial$-returns obtained above. 
Note that each $h_{x_i}\lambda_i$ bounds an infinite annulus in $ S^\times$.
Let $\Sigma$ be obtained from $S^\times$ by removing the annuli corresponding to each of the $h_{x_i}\lambda_i$. Then $\Sigma \hookrightarrow S^\times$ is a compact genus~$g$ surface with $n$ boundary components $\mathcal{C}_1=h_{x_1} \lambda_1, \dots, \mathcal{C}_n= h_{x_n} \lambda_n$. 
\end{construction}

To relate the length of $\partial \Sigma$ to the length of $\partial S$, %the $\mathcal{C}_i$ to the length of the $C_i$, 
we will need the following:

\begin{figure}[h!]
\centerline{\includegraphics[scale=0.52]{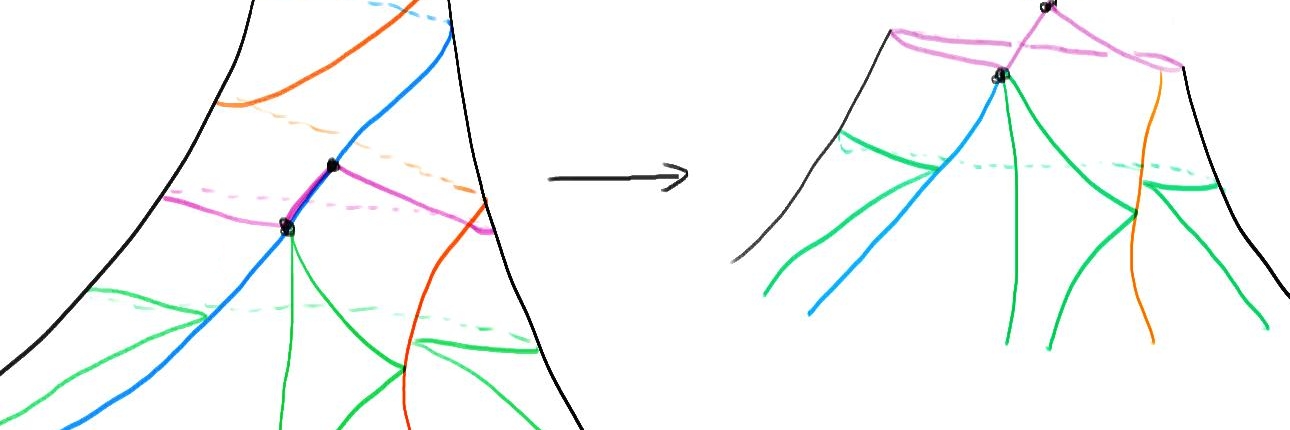}}
\caption{Trimming along an augmented $\boundary$-return to obtain $\Sigma$.} 
\label{fig:trim}
\end{figure}

\begin{lemma}\label{lem:quasig}
Let $h\lambda$ be an augmented  $\boundary$-return in $S^\times$. Then the universal cover $\widetilde{h\lambda}$ maps to an $(a,b)$-quasigeodesic in $\widetilde X$, where $a,b$ depend only on $g$, $n$, and $\delta$.
\end{lemma}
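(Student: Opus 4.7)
The plan is to apply Theorem~\ref{thm:loco}, the local-to-global criterion for quasigeodesics, to the bi-infinite piecewise-geodesic path $\widetilde{h\lambda}$ in $\widetilde{X}$. One period of $\widetilde{h\lambda}$ consists of $N\leq 12|\chi|$ lifted jumps $\widetilde{J_1},\dots,\widetilde{J_N}$ of $h$ (by Remark~\ref{rem:semi}) followed by a lifted vertical segment $\widetilde{\lambda}$, which is automatically a geodesic of $\widetilde X$ since it lies in a side of a $\delta$-ideal triangle.

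First I would bound the jump lengths. Construction~\ref{ct:trim}\eqref{cond1} places $h$ at the intriangle level, where Lemma~\ref{lem:flatland} bounds the distance between sides of each $\delta$-ideal triangle by the uniform constant $K'$. Combined with the bounded buffer width this yields $|J_i|\leq K_1=K_1(\delta)$ and hence $|h|\leq NK_1\leq K_0$ for some $K_0=K_0(g,n,\delta)$. I would then replace each lifted copy of $h$ by a genuine geodesic $\widetilde{\tau}^{(k)}$ of $\widetilde X$ between its endpoints, with $|\widetilde{\tau}^{(k)}|\leq K_0$. The modified path $\cdots\widetilde{\lambda}^{(k-1)}\widetilde{\tau}^{(k)}\widetilde{\lambda}^{(k)}\widetilde{\tau}^{(k+1)}\cdots$ lies within Hausdorff distance $K_0$ of $\widetilde{h\lambda}$, so it suffices to establish the quasigeodesic property for this smoothed path.

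If $|\lambda|\geq L_0:=12(K_0+\delta)$, I would apply Theorem~\ref{thm:loco} with $L=K_0$. Conditions~(3) and~(4) of that theorem follow at once from $|\widetilde{\tau}^{(k)}|\leq L$, and condition~(1) is the defining inequality for $L_0$. Condition~(2)---bounding the diameter of $\widetilde{\lambda}^{(k)}\cap\neb{3\delta}(\widetilde{\lambda}^{(k+1)})$---is the main obstacle, since consecutive $\widetilde{\lambda}^{(k)}$ lie on verticals $\widetilde{\gamma}_k$ and $c_i^*\widetilde{\gamma}_k$ sharing a common cusp endpoint in $\partial\widetilde X$, where $c_i^*$ denotes the translation corresponding to one period of $\widetilde{h\lambda}$. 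I expect to combine the exponential divergence of geodesic rays sharing an ideal endpoint (Theorem~\ref{thm:cdiv} and Corollary~\ref{cor:closediverge}) with the offset bound $|h|\leq K_0$ to force the overlap diameter to depend only on $g$, $n$, and $\delta$.

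If instead $|\lambda|<L_0$, then $|h\lambda|\leq K_0+L_0=:K_2$, and since $h\lambda$ is freely homotopic in $S^\times$ to the boundary circle $C_i$, it represents an infinite-order element of $\pi_1X$ of word length at most $K_2$. Only finitely many elements of $\pi_1X$ have word length at most $K_2$, so among those with infinite order the translation length is bounded below by some $\epsilon_0=\epsilon_0(g,n,\delta)>0$. A $c_i^*$-periodic line of period $\leq K_2$ whose translating element has translation length $\geq\epsilon_0$ is immediately seen to be an $(a,b)$-quasigeodesic with $a=K_2/\epsilon_0$ and $b=O(K_2)$, completing the argument.
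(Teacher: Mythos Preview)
Your overall strategy matches the paper's: apply Theorem~\ref{thm:loco} to the periodic path with the lifts of $\lambda$ as the long geodesic pieces and the lifts of $h$ as the short pieces, and dispose of the case $|\lambda|<L_0$ by the finiteness of short essential loops. Your treatment of conditions~(1),~(3),~(4) and of the short-$\lambda$ case is essentially the paper's, and your replacement of each lifted $h$ by a genuine geodesic $\widetilde\tau^{(k)}$ is a clean way to put the path literally into the form required by Theorem~\ref{thm:loco}.

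The gap is in condition~(2). You correctly flag it as the main obstacle, but your proposed resolution does not work as stated. You note that consecutive verticals $\widetilde\gamma$ and $c_i^*\widetilde\gamma$ share a cusp endpoint and then invoke Corollary~\ref{cor:closediverge}; but that corollary is about rays with \emph{distinct} endpoints, so the shared endpoint is precisely the obstruction, not a tool. What must actually be shown is that the two subrays of $\widetilde\gamma$ and $c_i^*\widetilde\gamma$ that start on the intervening lift of $h$ and \emph{contain} $\widetilde\lambda$, $\widetilde\lambda'$ go to \emph{different} points of $\partial\widetilde X$. Only once that is established does Corollary~\ref{cor:closediverge} (together with the bound $|h|\leq K_0$ on the offset of the basepoints) give a uniform bound on $\diam\big(\widetilde\lambda\cap\neb{3\delta}(\widetilde\lambda')\big)$.

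The paper supplies the missing step by a topological argument in $\widetilde{S^\times}$ rather than in $\widetilde X$: the full lifted horizontal line through $\widetilde h$ separates $\widetilde{S^\times}$, and the two rays in question lie on opposite sides of it. If they nevertheless represented the same point of $\partial\widetilde X$, then identifying them downstairs would force a M\"obius strip in $S^\times$ homotopic into a boundary component, which is impossible in any surface. (This is also why the lemma remains valid for nonorientable $S^\times$, as the paper notes.) Your plan contains no analogue of this argument, and without it there is nothing preventing the two verticals from fellow-travelling along the entire length of $\widetilde\lambda$.
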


\begin{remark} We emphasize that Lemma~\ref{lem:quasig} does not require that $S^\times$ be orientable. The proof instead hinges on the fact that no surface contains a Möbius strip homotopic into the boundary of the surface. 
\end{remark}

\begin{proof}
We will apply Theorem~\ref{thm:loco}, where the lifts of $h$ play the roles of the various $\sigma_i$ and the lifts of $\lambda$ play the roles of the various $\lambda_i$.
First we show there is $L'>0$ such that for any two consecutive lifts $\tilde \lambda$ and $\tilde \lambda$, the intersection $\tilde \lambda \cap \mathcal{N}_{3\delta}(\tilde \lambda')$ has diameter at most $L'$. Let $\tilde \gamma_j, \tilde \gamma'_j$ be the lifts of $\gamma_j$ with $\tilde \lambda \subset \tilde \gamma_j$ and $\tilde \lambda' \subset \tilde \gamma'_j$.
 Let $\bar{h}$ be the lift of $h$ connecting $\tilde \lambda$ to $\tilde \lambda'$.
 By Theorem~\ref{thm:cdiv}, it suffices to show that the subrays $\bar{\gamma}_j:[0, \infty) \rightarrow \Sigma$ and $\bar{\gamma}_j' :[0, \infty) \rightarrow \Sigma$  of $\tilde{\gamma}_j$ and $\tilde{\gamma}_j'$ having an endpoint on $\bar h$ and containing respectively $\tilde \lambda$ and $\tilde \lambda'$, do not represent the same point on  $\partial \widetilde X$.
 
Notice that the lift $\bar{h}_0$ of the horizontal path containing $h$ is two-sided in the sense that $\bar{h}_0$ separates $\widetilde S^\times$, and  $\bar{\gamma}_j$ and $\bar{\gamma}_j'$ lie on opposite sides of $\bar{h}_0$. 
Arguing by contradiction, suppose $\bar{\gamma}_j$ and $\bar{\gamma}_j'$ represent the same point on $\partial \widetilde X$.
Since $\bar{\gamma}_j$ and $\bar{\gamma}_j'$ both have an endpoint on $\bar{h}$, it follows that $\tilde \lambda(1)$ and $\tilde \lambda'(0)$ project to the same point in the quotient,
 and a neighbourhood of $\tilde h$ connecting the lifts $\tilde \gamma_j$ and $\tilde \gamma'_j$ as in Figure~\ref{fig:mobius}
 produces a Möbius strip in $S^\times$ that is homotopic into the boundary, which is impossible. 
Hence, $\bar{\gamma_j}$ and $\bar{\gamma_j}'$ have different endpoints on $\partial \widetilde X$. 

Since $\bar{\gamma_j}(1)=h(0)$ and $h(1)=\bar{\gamma_j}'(0)$, and $\dist(\bar{\gamma_j}(0), \bar{\gamma_j}'(0))=|h| \leq 12|\chi|$ by Corollary~\ref{lem:oneside}, it follows from Corollary~\ref{cor:closediverge} that there exists $L'>0$ for which $\bar{\gamma_j}, \bar{\gamma_j}'$ do not lie in the $3\delta$-neighbourhood of each other.

Since $|h| \leq 12|\chi|$, choosing $L''>12|\chi|$ ensures that $\diam\big(\tilde \lambda \cap \mathcal{N}_{3\delta}(\tilde h\tilde \lambda')\big )\leq L''$, so letting $L=max\{L', L''\}$ yields the desired result, provided that $\frac{1}{2}|\tilde{\lambda}| \geq 6(L+\delta)$. If $\frac{1}{2}|\tilde{\lambda}| <6(L+\delta)$ then $\frac{1}{2}|\lambda|<6(L+\delta)$, and so $\tilde h\lambda$ is still a uniform quasigeodesic, since it projects to an essential path in $\Sigma$ of length uniformly bounded by $6(L+\delta) + 12|\chi|$ and there are only finitely many such combinatorial paths.
\end{proof}

\begin{figure}[h!]
\centerline{\includegraphics[scale=0.5]{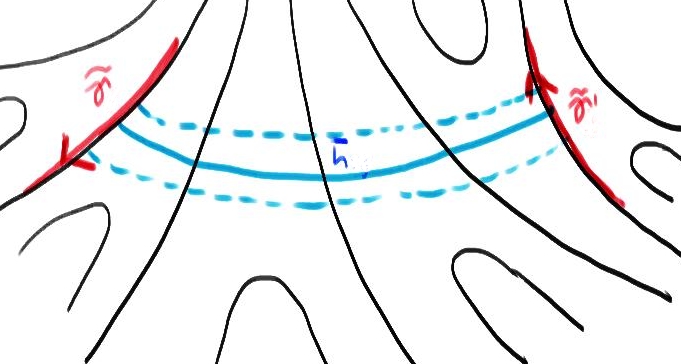}}
\caption{Möbius strip.}
\label{fig:mobius}
\end{figure}

\begin{corollary}
For every boundary component $\mathcal{C}_j$ of $\Sigma$, the universal cover $\widetilde{\mathcal{C}}_j$ maps to an $(a,b)$-quasigeodesic in $\widetilde X$, where $a,b$ depend only on $g$, $n$, and $\delta$.
\end{corollary}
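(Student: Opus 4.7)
The plan is to read the corollary as a direct unwinding of Lemma~\ref{lem:quasig}. By Construction~\ref{ct:trim}, the boundary components of $\Sigma$ are, by definition, the augmented $\partial$-returns $h_{x_1}\lambda_1,\ldots,h_{x_n}\lambda_n$ chosen when trimming $S^\times$; that is, $\mathcal{C}_j = h_{x_j}\lambda_j$ as a cycle in $S^\times$. So the hypothesis of Lemma~\ref{lem:quasig} is satisfied verbatim by each $\mathcal{C}_j$.

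First I would identify the two relevant universal covers. Since $\mathcal{C}_j$ is homeomorphic to a circle, $\widetilde{\mathcal{C}_j}\cong \reals$. Choosing a basepoint, the map $\mathcal{C}_j\hookrightarrow S^\times \rightarrow X$ induces a lift $\widetilde{\mathcal{C}_j}\rightarrow\widetilde{X}$ that factors through the cyclic cover in $\widetilde{S^\times}$: explicitly, this lift is the bi-infinite concatenation of translates of the lifts $\tilde h_{x_j}$ and $\tilde \lambda_j$ obtained by repeatedly applying the deck transformation corresponding to the loop $h_{x_j}\lambda_j$. This is precisely the object Lemma~\ref{lem:quasig} treats, so its conclusion provides the required $(a,b)$-quasigeodesic property for each $\widetilde{\mathcal{C}_j}$.

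The remaining point is uniformity of $(a,b)$ across $j$. Tracing through the proof of Lemma~\ref{lem:quasig}, the constants come from Theorem~\ref{thm:loco} with parameter $L=\max\{L',L''\}$, where $L''>12|\chi|$ via Corollary~\ref{lem:oneside} and $L'$ is supplied by Corollary~\ref{cor:closediverge} applied with the input bound $O=12|\chi|$, all of which depend only on $g$, $n$ (through $\chi=2-2g-n$) and $\delta$. The short-$\tilde\lambda$ exceptional case in the lemma likewise reduces to finitely many combinatorial paths of length uniformly bounded in $g$, $n$, $\delta$. Hence the same $(a,b)$ work simultaneously for every $j\in\{1,\ldots,n\}$.

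I do not expect a genuine obstacle here: the serious analytic work (the non-backtracking argument using the Möbius-strip obstruction and the local-to-global quasigeodesic criterion) was already done in Lemma~\ref{lem:quasig}. The only care needed is the bookkeeping in the previous paragraph, namely to verify that what the lemma calls ``the universal cover of $h\lambda$'' is canonically the same thing, up to reparametrisation, as the map $\widetilde{\mathcal{C}_j}\rightarrow \widetilde X$ in the corollary's statement, and that the Lemma's constants are indeed selected independently of the particular augmented $\partial$-return among the finitely many chosen in Construction~\ref{ct:trim}.
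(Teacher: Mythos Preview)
Your proposal is correct and matches the paper's approach: the corollary is stated without proof in the paper, as it is an immediate consequence of Lemma~\ref{lem:quasig} together with the fact that, by Construction~\ref{ct:trim}, each $\mathcal{C}_j$ is literally one of the augmented $\partial$-returns $h_{x_j}\lambda_j$. Your additional remarks on the uniformity of $(a,b)$ across $j$ are accurate bookkeeping and go slightly beyond what the paper spells out, but the content is the same.
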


\begin{corollary}\label{cor:lengthbdry}
In the setting of Construction~\ref{ct:trim}, each boundary component $\mathcal{C}_i$ of $\Sigma$ is homotopic in $X$ to a boundary component $C_i$ of $S$, and $\partial \Sigma=V \cup H$ where $V=\cup \lambda_j$ and $H=\cup_j h_j$.
Moreover, there is a constant $\mathcal{K}>0$ with $|\mathcal{C}_i|\leq \mathcal{K}|C_i|$. See Figure~\ref{fig:trim}.
\end{corollary}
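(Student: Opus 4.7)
The plan is to establish the three assertions of the corollary in order: the homotopy equivalence of boundary circles, the decomposition $\boundary \Sigma = V \cup H$, and the length bound $|\mathcal{C}_i| \leq \mathcal{K}|C_i|$. The first two follow directly from Construction~\ref{ct:trim}: each $\mathcal{C}_i = h_{x_i}\lambda_i$ is an augmented $\boundary$-return chosen so that it separates the intriangles of $S^\times$ from the cusp $\mathcal{L}_i$. Since $\mathcal{L}_i$ is an infinite cylinder that deformation retracts onto $C_i$, the annular region between $\mathcal{C}_i$ and $C_i$ furnishes a homotopy $\mathcal{C}_i \simeq C_i$ in $S^\times$ which descends to $X$ via $S^\times \rightarrow S^\delta \rightarrow X$. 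The decomposition $\boundary \Sigma = V \cup H$ with $V = \cup_i \lambda_i$ and $H = \cup_i h_{x_i}$ is immediate from the form $\mathcal{C}_i = h_{x_i}\lambda_i$.

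For the length bound, I would split $|\mathcal{C}_i| = |h_{x_i}| + |\lambda_i|$ and control the two summands separately. The horizontal contribution is handled by Corollary~\ref{lem:oneside}, which gives $|h_{x_i}| \leq 12|\chi|$: a constant depending only on $g$ and $n$. To bound $|\lambda_i|$ in terms of $|C_i|$, I would appeal to Lemma~\ref{lem:quasig}: the lift $\widetilde{\mathcal{C}}_i$ is an $(a,b)$-quasigeodesic with constants depending only on $g, n, \delta$. By Construction~\ref{cnst:idealtri}, the lift $\widetilde{C}_i$ is likewise a periodic quasigeodesic, with constants depending only on $X$. Since $\mathcal{C}_i$ and $C_i$ are homotopic in $X$, both lifts are quasi-axes of the same infinite-order element $g_i \in \pi_1 X$.

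I would then invoke the standard fact that for a $g_i$-invariant $(a,b)$-quasigeodesic in a $\delta$-hyperbolic space, one period has combinatorial length comparable to the translation length $\tau(g_i)$, with constants depending only on $a, b, \delta$. Applied to both lifts, this yields $\tau(g_i) \leq |C_i|$ and $|\mathcal{C}_i| \leq a\tau(g_i) + C$ for a constant $C = C(a,b,\delta)$, whence $|\mathcal{C}_i| \leq a|C_i| + C$. Since $g_i$ has infinite order, the combinatorial length $|C_i|$ is at least $1$, so the additive constant is absorbed to yield the desired $|\mathcal{C}_i| \leq \mathcal{K}|C_i|$ with $\mathcal{K}$ depending only on $g, n, \delta$ and $X$.

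The main obstacle is securing uniform quasigeodesic constants for the lifts $\widetilde{\mathcal{C}}_i$ across all boundary components; this is precisely the content of Lemma~\ref{lem:quasig}, and once uniformity is in hand the translation-length comparison is a standard consequence of the Morse property (Theorem~\ref{thm:quasigg}) applied to periods of the quasi-axis of $g_i$.
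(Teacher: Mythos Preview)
Your proposal is correct and is essentially the argument the paper intends: the corollary is stated there without proof, and your translation-length comparison via the uniform quasigeodesic constants from Lemma~\ref{lem:quasig} is the natural way to fill in the details. The preliminary split $|\mathcal{C}_i|=|h_{x_i}|+|\lambda_i|$ is slightly redundant, since your quasi-axis argument already bounds all of $|\mathcal{C}_i|$ at once, but this is harmless.
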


\begin{corollary}\label{cor:htpysurf} There is a genus~$g$ diagram $\Sigma'$ homotopic to $\Sigma$ and such that $\partial \Sigma'=\partial S$. Moreover, $\area(\Sigma')\leq \area(\Sigma)+ nM|\partial \Sigma|$. 
\end{corollary}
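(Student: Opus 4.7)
The plan is to build $\Sigma'$ by attaching annular ``collars'' to $\Sigma$ along each of its boundary components, using the annular isoperimetric inequality (Proposition~\ref{prop:BHconj}) to guarantee that these collars have controlled area.

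First, by Corollary~\ref{cor:lengthbdry}, each boundary component $\mathcal{C}_i$ of $\Sigma$ is freely homotopic in $X$ to the corresponding boundary component $C_i$ of $S$. Both $\mathcal{C}_i$ and $C_i$ are essential closed combinatorial paths in $X^1$, so Proposition~\ref{prop:BHconj} furnishes an annular diagram $A_i\to X$ with $\partial A_i=\mathcal{C}_i\sqcup C_i$ and
\[
\area(A_i)\ \leq\ M\cdot\max\{|\mathcal{C}_i|,|C_i|\}.
\]

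Next, define
\[
\Sigma'\ :=\ \Sigma\ \cup_{\sqcup_i \mathcal{C}_i}\ \Big(\bigsqcup_{i=1}^n A_i\Big),
\]
where each $A_i$ is glued to $\Sigma$ via the identification of its inner boundary circle with $\mathcal{C}_i\subset\partial\Sigma$, and the outer boundary circle is identified with $C_i$. Since each $A_i$ is an annulus attached along a single boundary circle, this gluing does not change the underlying genus or the number of boundary components: $\Sigma'$ is a compact surface of genus $g$ with $n$ boundary components, and an obvious deformation retraction of each $A_i$ onto $\mathcal{C}_i$ exhibits $\Sigma'$ as homotopy equivalent to $\Sigma$. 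By construction $\partial\Sigma'=\sqcup_i C_i=\partial S$, as required. One should verify that $\Sigma'$ satisfies Definition~\ref{def:diagrams}, which follows routinely from the fact that $\Sigma$ does and that gluing $2$-complexes along a common boundary circle preserves the surface-diagram structure.

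Finally, summing areas,
\[
\area(\Sigma')\ =\ \area(\Sigma)+\sum_{i=1}^n\area(A_i)\ \leq\ \area(\Sigma)+M\sum_{i=1}^n\max\{|\mathcal{C}_i|,|C_i|\}.
\]
Each $|\mathcal{C}_i|$ is bounded by $|\partial\Sigma|$, and, as will be exploited in the application of this corollary in Theorem~\ref{cor:cutintodisc}, $|C_i|$ is likewise bounded in terms of the data of the ambient diagram; absorbing these bounds into a comparison with $|\partial\Sigma|$ yields the stated inequality $\area(\Sigma')\leq\area(\Sigma)+nM|\partial\Sigma|$.

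There is no substantive obstacle here; the corollary is essentially bookkeeping that packages Proposition~\ref{prop:BHconj} and Corollary~\ref{cor:lengthbdry} into a single statement. The only point requiring a little care is confirming that gluing the $A_i$ along boundary circles indeed produces an object that qualifies as a genus~$g$ surface diagram in the sense of Definition~\ref{def:diagrams}, and this is immediate since the gluing occurs along embedded boundary circles.
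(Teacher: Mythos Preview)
Your proposal is correct and follows exactly the approach the paper intends: the paper's own proof is the single line ``This follows from Proposition~\ref{prop:BHconj} and Corollary~\ref{cor:lengthbdry}'', and you have faithfully unpacked that by gluing an annular collar $A_i$ along each $\mathcal{C}_i$ and summing areas. The only slightly soft step is the passage from $M\sum_i\max\{|\mathcal{C}_i|,|C_i|\}$ to $nM|\partial\Sigma|$, which implicitly uses a comparison of $|C_i|$ with $|\partial\Sigma|$; this imprecision is already present in the paper's statement and is harmless for the intended application, since ultimately everything is linear in $|\partial S|$ via Corollary~\ref{cor:lengthbdry}.
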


\begin{proof}
This follows from Proposition~\ref{prop:BHconj} and Corollary~\ref{cor:lengthbdry}.
\end{proof}

\begin{remark}\label{rmk:restrict}
Henceforth we restrict our attention to surface diagrams $\Sigma$ obtained via Construction~\ref{ct:trim}. By Corollary~\ref{cor:htpysurf}  it suffices to bound $\area (\Sigma)$ to prove Theorem~\ref{thm:main}.
\end{remark}

\subsection{Combinatorics of bands}

The horizontal paths $h$ and $h'$ are \emph{parallel}
if 
 their sequences of initial points of jumps
$p_1,\ldots, p_m$ and $p'_1,\ldots, p'_m$ have the same length
and moreover,
$p_i, p'_i$ lie on the same %vertical
vertical $\gamma$  for each $i$,
but are not separated by either vertex of an intriangle on $\gamma$.
Equivalent horizontal paths are obviously parallel,
and parallelism is an equivalence relation.

\begin{definition} \label{def:bands}
Let $h,h'$ be parallel horizontal path. There is a ``rectangular" disc diagram $B \rightarrow \Sigma$ with boundary $\partial B=h^{-1}\iota_{_P}^{-1}h'\tau_{_P}$, where 
$\iota_{_P}$ is the %vertical
subgeodesic of $\gamma$ bounded by $x,x'$, where
$\tau_{_P}$ is the subgeodesic of  $\gamma'$ bounded by the terminal points of $h,h'$, 
and where the orientations of $\iota_{_P}, \tau_{_P}$ are chosen to ensure concatenability. We refer to $B$ as a \emph{band}.

The \emph{thickness} of $B$ is $\dist_\gamma(x,x')$.
We say $B$ is \emph{bounded} by $h$ and $h'$, and let $\iota_{_B}=\iota_{_P}$ and $\tau_{_B}=\tau_{_P}$.
%We shall always assume that $h$ and $h'$ do not cross within $\Sigma$ though it is possible for them to touch at vertices or edges.   

The \emph{trajectory} of a band is the path $T_B\rightarrow \Gamma $ that equals the composition $h \rightarrow \Sigma \rightarrow \Gamma$.
\end{definition}

Of paramount importance are  bands that are \emph{maximal} with respect to their trajectory. They have the property that they do not properly factor through another band. %We shall also restrict consideration to bands bounded by horizontal paths $h,h'$ that do not cross in $\Sigma$. However it is possible for them to overlap along subpaths.  \begin{com} maybe permute stuff?? \end{com}
A maximal band $B$ is uniquely determined by a trajectory $T_B$ in $\Gamma$ because of the following: 

\begin{lemma}\label{lem:trajectory} Let $h,h'$ be horizontal paths projecting to the same trajectory in $\Gamma$, then $h$ and $h'$ are parallel.
\end{lemma}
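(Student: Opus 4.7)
The plan is to read off the conclusion directly from the way $\Gamma$ was constructed, exploiting the fact that edges of $\Gamma$ correspond bijectively to parallelism classes of jumps. The argument is essentially a translation between the combinatorics of trajectories in $\Gamma$ and the combinatorics of sequences of jumps in $S^\times$, with no new geometric input required.

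Concretely, I would write $h = J_1 J_2 \cdots J_n$ and $h' = J_1' J_2' \cdots J_n'$. The two horizontal paths have the same jump length $n$ because a trajectory in $\Gamma$ decomposes uniquely into its sequence of edges, and each jump of a horizontal path contributes exactly one edge to that sequence. The hypothesis that $h$ and $h'$ project to the same trajectory $T_h = T_{h'}$ in $\Gamma$ therefore forces $T_{J_i} = T_{J_i'}$ as oriented edges of $\Gamma$ for every $i$. Since edges of $\Gamma$ are by definition parallelism classes of jumps, this says that $J_i$ is parallel to $J_i'$ as jumps, for each $i$; by the criterion recorded after Definition~\ref{def:jump}, the initial point $p_i$ of $J_i$ and the initial point $p_i'$ of $J_i'$ lie on a common vertical $\gamma_i$ and are not separated by any vertex of an intriangle on $\gamma_i$.

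This is precisely the condition appearing in the definition of parallel horizontal paths, so $h$ and $h'$ are parallel. No separate compatibility needs to be established at the terminal points of the $J_i$: the terminal of $J_i$ is the initial of $J_{i+1}$ (and similarly for $h'$), so applying the same conclusion at index $i+1$ delivers the required condition on the next vertical, and the sequences of initial points $p_1, \ldots, p_n$ and $p_1', \ldots, p_n'$ automatically match up vertical-by-vertical.

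The main (minor) obstacle to verify is that the extra clauses in Definition~\ref{def:flow} do not introduce any slack: the absence of backtracks and the injectivity of initial and terminal points of jumps are properties of $h$ and $h'$ assumed at the outset rather than conditions that have to be reproved from the parallelism of the individual jumps. The real content of this section is packaged into the construction of $\Gamma$ as a graph whose edges index parallelism classes of jumps; once that identification is in hand, Lemma~\ref{lem:trajectory} is a bookkeeping statement about paths in a graph.
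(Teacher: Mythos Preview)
Your proposal is correct and follows essentially the same approach as the paper's proof: decompose $h$ and $h'$ into jumps, observe that the common trajectory forces equal jump length and that each pair $J_i, J_i'$ projects to the same edge of $\Gamma$, hence is parallel as jumps, which unpacks to exactly the condition defining parallel horizontal paths. The paper's own proof is a terse three-sentence version of the same argument; your write-up simply spells out the bookkeeping more explicitly.
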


\begin{proof}
Let $h,h'$ project to  $T \rightarrow \Gamma$, let $h=J_1\cdots J_m$ and $h'=J'_1\cdots J'_{m'}$.
Note that $m=m'$, since both horizontal paths must have as many jumps as the length of $T$. 
Finally, $J_i$ is parallel to $J'_i$ for  $1\leq i \leq m$.
\end{proof}

\begin{definition}\label{def:band-a}

A band is \emph{semi-embedded} if its trajectory is semi-embedded.

A maximal band $B$ is \emph{annular} if $T_B$ is semi-embedded and has a closed trajectory whose first and last edges do not lie in the same buffered ideal triangle.

An annular band $B$ is a \emph{snail} if  $\iota_{_B}\cap \tau_{_B}= \emptyset$, otherwise $B$
  is a \emph{spiral}. A \emph{cylinder} is a spiral having $\iota_{_B}=\tau_{_B}$. See Figure~\ref{fig:abands}.
  
   A spiral $B$ is \emph{twisted} if %is an annular band with $\iota_{_B}\cap\tau_{_B}$ not empty or a singleton, and whose bounding horizontal paths (or path in the case $\iota_{_B}=\tau_{_B}$), concatenated with the subpaths of $\iota_{_B} \cup \tau_{_B}$ connecting them, bound 
  its image in $\Sigma$ contains a Möbius strip. A twisted spiral having  $\iota_{_B}=\tau_{_B}$ is a \emph{twisted cylinder}. Twisted spirals can only arise in nonorientable surface diagrams. We use twisted spirals in Section~\ref{sec:nono}.

The \emph{boundary} of an annular band $A$ consists of the two concatenations $hv$ and $h'v'$  where $v$ is the subpath of $\gamma$ whose endpoints are the endpoints of $h$ and $v'$ is the subpath of $\gamma$ whose endpoints are the endpoints of $h'$.  
\end{definition}

\begin{warning}
We caution that the definition of boundary given above for annular bands does not coincide with the topological boundary of a band.
\end{warning}

\begin{remark}\label{rmk:separate} Let $A$ be an annular band, whose associated trajectory is $e_1\cdots e_k$. Let $J_1$ and $J_k$ be the corresponding jumps in $\Sigma$ and let $\gamma$ be the vertical containing $J_1(0)$ and $J_k(1)$.  Then $J_1$ and $J_k$ lie in opposite sides of $\gamma$, since $J_1$ and $J_k$ lie in distinct ideal triangles and $T_A$ is closed. 
\end{remark}

\begin{figure}[h!]
\includegraphics[scale=0.575]{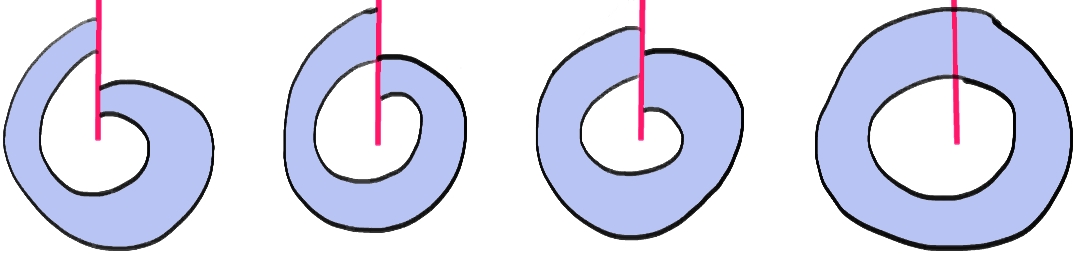}
\caption{Snail, spirals, and a cylinder.}
\label{fig:abands}
\end{figure}

\begin{definition}\label{def:jmplgt}
The \emph{jump length} of a band $B$ is equal to the jump length of any of its constituent horizontal paths.
\end{definition}

\begin{remark}\label{rmk:annularjump}
The jump length of an annular band is at most $12|\chi|$, since annular bands project to semi-embedded trajectories in $\Gamma$, whose length is $\leq 12|\chi|$ as noted in Remark~\ref{rem:semi}. 
\end{remark}

\begin{lemma}\label{lem:annular}
There exist $\mathcal{M}_a(\delta,n,g), \mathcal{M}_b(\delta,n,g) >0$ such that for every annular band $A$, each component of $\partial A$ lifts to a $(\mathcal{M}_a, \mathcal{M}_b)$-quasigeodesic.
\end{lemma}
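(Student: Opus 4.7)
The plan is to mirror the structure of the proof of Lemma~\ref{lem:quasig} and invoke Theorem~\ref{thm:loco}. Fix a component $\mathcal{C}$ of $\partial A$, written as $\mathcal{C}=hv$, where $h$ is a horizontal path whose jump length is at most $12|\chi|$ by Remark~\ref{rmk:annularjump}, and $v$ is a subpath of some vertical $\gamma$. If $\mathcal{C}$ is null-homotopic in $X$ there is nothing to prove, so assume $\mathcal{C}$ is essential. Its universal cover then lifts to a bi-infinite piecewise-geodesic path in $\widetilde{X}$ of the form $\cdots\tilde{v}_{-1}\tilde{h}_0\tilde{v}_0\tilde{h}_1\tilde{v}_1\cdots$, where each $\tilde{h}_i$ is a lift of $h$ (of length uniformly bounded in terms of $g,n,\delta$), and each $\tilde{v}_i$ is a subsegment of a bi-infinite geodesic $\tilde{\gamma}_i$ lifting a vertical.

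I would apply Theorem~\ref{thm:loco} with $\sigma_i=\tilde{h}_i$ and $\lambda_i=\tilde{v}_i$ extended within $\tilde{\gamma}_i$. Hypotheses~(3) and~(4) of Theorem~\ref{thm:loco} follow immediately from the uniform bound on $|h|$, provided $L$ is taken at least as large as that bound.

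The main obstacle is verifying hypothesis~(2): bounding $\diam\big(\tilde{v}_i\cap\neb{3\delta}(\tilde{v}_{i+1})\big)$ uniformly. Following Lemma~\ref{lem:quasig}, I would argue by contradiction: suppose the subrays of $\tilde{\gamma}_i$ and $\tilde{\gamma}_{i+1}$ starting at the endpoints of $\tilde{h}_i$ and containing $\tilde{v}_i$ and $\tilde{v}_{i+1}$ respectively represent the same point of $\partial\widetilde{X}$. By Remark~\ref{rmk:separate}, the first and last jumps of $h$ lie in distinct ideal triangles on opposite sides of $\gamma$, so the bi-infinite horizontal extension of $h$ is two-sided in $\widetilde{S^\times}$, placing $\tilde{\gamma}_i$ and $\tilde{\gamma}_{i+1}$ on opposite sides of that extension. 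The same M\"obius-strip construction as in the proof of Lemma~\ref{lem:quasig} then produces a M\"obius strip in $S^\times$ homotopic into $\partial S^\times$, which is impossible. Corollary~\ref{cor:closediverge}, applied with the uniform bound $\dist(\tilde{\gamma}_i(0),\tilde{\gamma}_{i+1}(0))\leq|h|$, then supplies a constant $L'$ beyond which the $3\delta$-neighbourhoods of the two rays are disjoint.

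Finally, setting $L=\max\{L',|h|\}$, hypothesis~(1) splits into two cases. If $\tfrac{1}{2}|v|\geq 6(L+\delta)$, Theorem~\ref{thm:loco} directly yields quasigeodesic constants depending only on $g,n,\delta$. Otherwise $|\mathcal{C}|$ is uniformly bounded, so $\mathcal{C}$ lies in a finite set of combinatorial loops in $\Sigma$; each such loop is essential by assumption and therefore its lift to $\widetilde{X}$ is a uniform quasigeodesic by compactness, as in the last sentence of the proof of Lemma~\ref{lem:quasig}. Taking the worst constants from the two cases produces the required $\mathcal{M}_a,\mathcal{M}_b$, each depending only on $g,n,\delta$.
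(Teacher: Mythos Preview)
Your proposal is essentially correct and mirrors the paper's proof: both apply Theorem~\ref{thm:loco} following the template of Lemma~\ref{lem:quasig}, with lifts of $h$ playing the role of the $\sigma_i$ and lifts of the vertical piece playing the role of the $\lambda_i$, and both invoke Remark~\ref{rmk:separate} for the key distinct-endpoints step.

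Two small points. First, your claim that the resulting M\"obius strip is ``homotopic into $\partial S^\times$'' is carried over from Lemma~\ref{lem:quasig} but is not justified here: that conclusion in Lemma~\ref{lem:quasig} relied on $h$ being part of a $\partial$-return, which an annular band need not be. In the orientable setting this is harmless, since \emph{any} M\"obius strip in $S^\times$ is already a contradiction; you should simply say that. The paper sidesteps this by citing Remark~\ref{rmk:separate} directly for the distinct-endpoints claim rather than routing through the M\"obius argument (and indeed notes in Section~\ref{sec:nono} that Lemma~\ref{lem:annular} is the one place where orientability, or more precisely the absence of twisted cylinders, is genuinely used). Second, the paper treats snails and spirals with slightly different choices of vertical piece ($u$ versus $\iota_A v$), whereas you handle both uniformly as $hv$; this is a cosmetic difference and your uniform treatment is fine given Definition~\ref{def:band-a}.
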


\begin{proof} We handle the cases of snails and spirals simultaneously.

The subgeodesic $\iota_{_A}\cup \tau_{_A}$ is a concatenation $uvu'$ where $v=\iota_{_A}\cap \tau_{_A}$ and $u,u'$ are the components of $\iota_{_A}\cup \tau_{_A} - v$. Let $A$ be bounded by the two horizontal paths $h$ and $h'$, so $\partial A=\vartheta \sqcup \vartheta'$ where $\vartheta=uh$ and $\vartheta'=u'h'$. 
Let $v$ be the subarc of $\gamma$  connecting $\iota_{_A}$ to $\tau_{_A}$, and let $A$ be bounded by the two horizontal paths $h$ and $h'$ as in the spiral case. Then $\partial A=\vartheta \sqcup \vartheta' $ where  $\vartheta=\iota_{_A}v h$ and $\vartheta'=\tau_{_A} v h'$. 

We claim that, in both the spiral and snail cases, $\tilde \vartheta$ and $\tilde \vartheta'$ are uniform quasigeodesics in $\widetilde X$.
We prove this for $\tilde \vartheta$, and the proof for $\tilde \vartheta'$ is analogous.

The proof is an application of  Theorem~\ref{thm:loco}, and is identical to that of Lemma~\ref{lem:quasig} with the lifts of $h$ replacing $\{\sigma_i\}$, the lifts of $u$  replacing $\{\lambda_i\}$ in the spiral case, the lifts of $\iota_{_A} v$ replacing $\{\lambda_i\}$ in the snail case, and $L''>K'12|\chi|$. The proof uses that subrays of the geodesics containing consecutive lifts of $\lambda_i$ represent distinct points on $\partial \widetilde X$. This holds by Remark~\ref{rmk:separate}. 

Hence, $\tilde \vartheta$ and $\tilde \vartheta'$ are $(\mathcal{M}_a, \mathcal{M}_b)$-quasigeodesics.
\end{proof}

As a consequence of Lemma~\ref{lem:annular} and Proposition~\ref{prop:BHconj}, we obtain:

\begin{corollary}\label{cor:spiral}
There is a constant $\mathcal{M}_c$ such that for every spiral $A$, there is an annular diagram $A' \rightarrow X$  with $\area(A')\leq \mathcal{M}_c |\partial A'|$ and having $\partial A\cong \partial A'$ such that the following diagram commutes:
\[\begin{tikzcd}
\partial A' \arrow[rr, "\cong"] \arrow[rrd] &  & \partial A \arrow[d] \\
                                            &  & X                               
\end{tikzcd}\]
\end{corollary}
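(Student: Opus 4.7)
The plan is to apply the annular isoperimetric inequality (Proposition~\ref{prop:BHconj}) to the two boundary components of the spiral, using Lemma~\ref{lem:annular} to supply the hypotheses. Write $\partial A = \vartheta \sqcup \vartheta'$ as in the proof of Lemma~\ref{lem:annular}.

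First I would verify that $\vartheta \to X$ and $\vartheta' \to X$ are essential closed combinatorial paths. By Lemma~\ref{lem:annular}, their universal-cover lifts $\tilde\vartheta, \tilde\vartheta'$ are bi-infinite $(\mathcal{M}_a,\mathcal{M}_b)$-quasigeodesics in $\widetilde X$; a nullhomotopic closed combinatorial path would lift to a compact path in $\widetilde X$, which cannot be quasi-isometric to $\mathbb{R}$. Next I would observe that $\vartheta$ and $\vartheta'$ are freely homotopic in $X$: the spiral is a rectangular disc diagram $A \to \Sigma$ whose two ``horizontal'' sides are the parallel horizontal paths $h,h'$ and whose two ``vertical'' sides are subarcs of the single vertical $\gamma$ at which the trajectory closes up. The image of this rectangle in $\Sigma$ realises a homotopy between $\vartheta$ and $\vartheta'$, and postcomposing with $\Sigma \to X$ gives the desired free homotopy in $X$.

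With these two ingredients in place, Proposition~\ref{prop:BHconj} applies with $\sigma = \vartheta$ and $\sigma' = \vartheta'$, producing an annular diagram $A' \to X$ with $\partial A' = \vartheta \cup \vartheta'$ (carrying the same maps to $X$ as the components of $\partial A$, which yields the commutative diagram in the statement) and $\area(A') \leq M \cdot \max\{|\vartheta|, |\vartheta'|\}$ for the constant $M = M(\delta)$ from that proposition. Since $|\partial A'| = |\vartheta| + |\vartheta'| \geq \max\{|\vartheta|,|\vartheta'|\}$, setting $\mathcal{M}_c := M$ gives $\area(A') \leq \mathcal{M}_c |\partial A'|$.

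There is no serious obstacle: the corollary is a direct bundling of Lemma~\ref{lem:annular} with Proposition~\ref{prop:BHconj}. The only care required is to distinguish the \emph{rectangular} disc-diagram structure of the band $A$ from the \emph{annular} region it traces out in $\Sigma$, so that one correctly identifies $\vartheta$ and $\vartheta'$ as genuine closed combinatorial paths mapping essentially into $X$ before invoking annular isoperimetry.
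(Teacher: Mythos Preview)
Your proposal is correct and matches the paper's approach: the paper states the corollary as an immediate consequence of Lemma~\ref{lem:annular} and Proposition~\ref{prop:BHconj} without further detail, and you have filled in precisely those details (essentiality from the quasigeodesic lifts, the free homotopy via the annular image of the spiral in $\Sigma$, then annular isoperimetry).
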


\begin{lemma}\label{cor:thickness}
There exists $\kappa(\delta,n,g) >0$, and a genus~$g$ diagram $\Sigma'$ with $\partial \Sigma'=\partial \Sigma$
 and such that every annular band in $\Sigma'$  has thickness at most $\kappa$.
\end{lemma}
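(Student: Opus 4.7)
The plan is to iteratively excise thick portions of annular bands using the annular isoperimetric inequality, arguing that each surgery strictly decreases $\area(\Sigma)$ so that the process terminates with all annular bands of thickness bounded by a constant $\kappa=\kappa(\delta,g,n)$.

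First I assemble the length and area estimates. By Remark~\ref{rmk:annularjump} every annular band has jump length at most $12|\chi|$, and by Lemma~\ref{lem:flatland} the two sides of a $\delta$-ideal triangle remain within $K'$ of each other at any fixed height. Consequently each jump in a horizontal path of an annular band of thickness $t'$ has length at most $K'+t'/(12|\chi|)$, so each horizontal path bounding an annular sub-band $B'$ of thickness $t'$ has combinatorial length at most $12|\chi|K'+t'$. By Lemma~\ref{lem:annular} the boundary of $B'$ lifts to $(\mathcal{M}_a,\mathcal{M}_b)$-quasigeodesics, so Proposition~\ref{prop:BHconj} yields an annular diagram $A'\to X$ with $\partial A'=\partial B'$ and $\area(A')\leq M(12|\chi|K'+2t')$, a quantity linear in $t'$. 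In contrast, since $B'$ contains on the order of one 2-cell per unit horizontal length per unit thickness, $\area(B')$ is on the order of $(12|\chi|K'+t')\cdot t'$, which is quadratic in $t'$.

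I then choose $\kappa=\kappa(\delta,g,n)$ to be the smallest positive integer such that any sub-band of thickness $\kappa/2$ satisfies $\area(B')\geq\area(A')+1$; such a $\kappa$ exists by the quadratic-versus-linear comparison above and depends only on $\delta,g,n$. If $\Sigma$ contains an annular band $B$ of thickness $t>\kappa$, I select a sub-band $B'\subset B$ of thickness $\lfloor t/2\rfloor\geq\kappa/2$ lying strictly in the interior of $B$, excise $\interior(B')$, and glue in $A'$ along the common boundary to obtain a new genus-$g$ diagram $\Sigma''$ with $\partial\Sigma''=\partial\Sigma$ and $\area(\Sigma'')<\area(\Sigma)$. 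Reapplying Construction~\ref{cnst:idealtri} to $\Sigma''$ produces a fresh $\delta$-ideal triangulation, and I iterate. Because $\area$ is a non-negative integer strictly decreasing at each step, the process terminates in finitely many iterations, yielding the desired $\Sigma'$.

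The main obstacle is the area comparison, specifically the quadratic lower bound for $\area(B')$ paired with the linear upper bound for $\area(A')$. The key input is the bound $12|\chi|K'+t'$ on the length of horizontal paths in annular bands of thickness $t'$, which in turn rests on Lemma~\ref{lem:flatland} and the fact that the total vertical offset traversed by a horizontal path wrapping around an annular band of thickness $t'$ is exactly $t'$. A secondary subtlety is that reapplying Construction~\ref{cnst:idealtri} to each intermediate $\Sigma''$ produces a potentially new set of annular bands, but the strict area decrease ensures termination regardless of how the annular-band structure reorganizes.
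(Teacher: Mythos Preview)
Your approach has a genuine gap at its core: the asserted quadratic lower bound on $\area(B')$ is not established, and in fact need not hold. You write that ``$B'$ contains on the order of one 2-cell per unit horizontal length per unit thickness,'' but $B'$ is a combinatorial disc diagram whose 2-cells are inherited from the rectangular fillings $R_i$ of Construction~\ref{const:infinite}. Those $R_i$ are disc diagrams for the short closed paths $\mu_i e_i \mu_{i+1}^{-1}(e_i')^{-1}$, and nothing prevents many of them from having area zero (for instance, whenever the two asymptotic rays $r,r'$ coincide along a segment, as happens routinely when $\widetilde X$ is tree-like). So a sub-band of large thickness $t'$ can have combinatorial area far below $t'^2$, and your comparison $\area(B') \geq \area(A') + 1$ cannot be guaranteed for any choice of $\kappa$. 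Without that strict inequality the surgery need not decrease area, and the whole iteration collapses.

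There is a second problem with termination even if the area comparison were valid. After the surgery you propose to ``reapply Construction~\ref{cnst:idealtri} to $\Sigma''$,'' but that construction discards $\Sigma''$ entirely: it rebuilds $S^\delta$, then $S^\times$, then trims to a fresh $\Sigma$ whose area bears no relation to $\area(\Sigma'')$. So the quantity you claim is strictly decreasing is not actually tracked across iterations. If instead you do not reapply the construction, then the glued-in annulus $A'$ is an arbitrary diagram with no ideal-triangle structure, so ``annular band'' is no longer defined in the result and you cannot iterate.

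The paper avoids iteration altogether and bounds the thickness directly from hyperbolic geometry. By Lemma~\ref{lem:annular} the two boundary circles $\vartheta,\vartheta'$ of an annular band lift to uniform quasigeodesics in $\widetilde X$; since they are homotopic in $X$ their lifts fellow-travel, so $\tilde\vartheta \subset \neb{\kappa_0}(\tilde\vartheta')$ for a uniform $\kappa_0$. One then takes $p\in\tilde\vartheta$, a nearby $q\in\tilde\vartheta'$, and the point $r\in\tilde\vartheta'$ across the band from $p$, and argues that the concatenation $\overline{pq}\cdot\overline{qr}$ is itself a uniform quasigeodesic lying in a bounded neighbourhood of a short geodesic, forcing $\dist(p,r)$ --- the thickness --- to be uniformly bounded. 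For spirals a preliminary replacement via Corollary~\ref{cor:spiral} is needed, but the thickness bound is obtained in one step with no induction on area.
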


\begin{proof}
 We deal with snails first. Let $A$ be a snail and let $\widetilde A$ be a component of its preimage in $\widetilde X$. We retain the notation of Lemma~\ref{lem:annular}.  
 
 Since $\tilde \vartheta$ and $\tilde \vartheta'$ are uniform quasigeodesics that stay at constant distance from each other, $\tilde \vartheta$ lies in the $\kappa_0$-neighbourhood of $\tilde \vartheta'$ in $\widetilde X$ for some $\kappa_0(\delta, \mathcal{M}_a\mathcal{M}_b)>0$.
 Let $p \in \tilde \vartheta$ and $q\in \tilde\vartheta'$ be points at distance at most $\kappa_0$ from each other and let $\overline{pq}$ be a geodesic from $p$ to $q$. Let $r \in \tilde\vartheta'$ be such that $d(p,r)=$ thickness of $A$.
 The path $\overline{pqr}$ is a uniform quasigeodesic. Indeed, let $\tilde{v}$ be a lift of $v$, and choose $\tilde{v}$ so that it is closest to $\overline{pr}$ among all lifts. Let $p'\in \tilde{\vartheta},r'\in \tilde{\vartheta}'$ be the endpoints of $\tilde v$, then $\tilde{v}$ is parallel to $\overline{pr}$ and $d(p,p')=d(r,r')\leq 12|\chi|$. Hence $\overline{pqr}$ lies in the $12|\chi|$-neighbourhood of  $\tilde{\vartheta}$, which is a $(\mathcal{M}_a,\mathcal{M}_b)$-quasigeodesic by Lemma~\ref{lem:annular}. 

 Since $\overline{pqr}$ is a quasigeodesic that stays at a uniformly bounded distance $\kappa_0$ from the geodesic $\overline{pq}$, it follows that $d(p,r)=|\overline{pr}|$ is uniformly bounded by some constant $\kappa'(\delta,n,g)$. Therefore the thickness of $A$ is at most $\kappa'$.
 
 For spirals, the proof is identical when $\iota_{_A}\cap\tau_{_A}$ is a single point. In the general case the situation is a little more subtle, since part of the interval that we need to bound lies in the interior of $A$ rather than on its boundary, which prevents us from knowing a priori that the path $\overline{pqr}$ is a quasigeodesic. To remedy this, replace $\Sigma$ by the diagram $\Sigma'$ containing the  bounded area annular diagram $A'$ obtained in Corollary~\ref{cor:spiral}. 
 Argue as in the previous case to see that the thickness of $A$ is at most $\kappa''(\delta,n,g)$.
 
 The claim follows by letting $\kappa=max\{\kappa',\kappa''\}$.
\end{proof}

\begin{figure}%[h!]
\centerline{\includegraphics[scale=0.56]{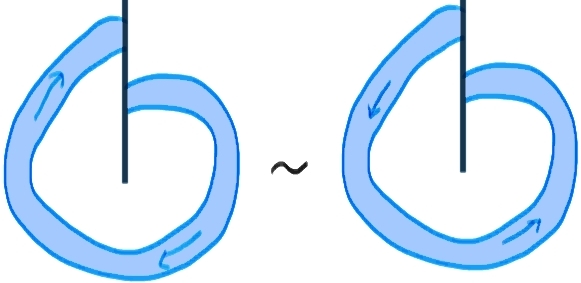} \hspace{1cm} \includegraphics[scale=0.59]{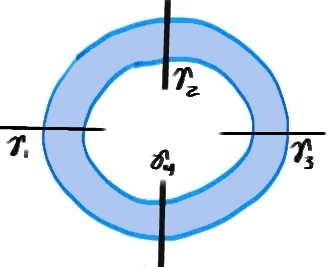}}
\caption{The two bands on the left are equivalent; the bands on the right, starting at any of $\gamma_1,\gamma_2,\gamma_3,\gamma_4$ are equivalent.}
\label{fig:equiv}
\end{figure}

\begin{definition}\label{def:bridges}
The \emph{interior} $\text{Int}(A)$ of a band is the interior of its image in $\Sigma$.
The \emph{generalised boundary} of $\Sigma$ is $\partial\Sigma-\cup_{A \in \mathcal{A}} \text{Int}(A)$. The generalised boundary of $\Sigma$ inherits the structure of $\partial \Sigma$, in the sense that each boundary curve is the concatenation of a vertical subgeodesic and a horizontal path.

A \emph{bridge} is a maximal band  in $\Sigma- \cup_{A \in \mathcal{A}} \text{Int}(A)$. It does not properly factor through another band in $\Sigma- \cup_{A \in \mathcal{A}} \text{Int}(A)$.
\end{definition}

Semi-embedded bands $B,B'$ are \emph{equivalent} if they have the same image in $\Sigma$.
Let $\mathcal{E}$ be the set of semi-embedded bands. Let $\mathcal{A}\subset \mathcal{E}$ be the equivalence classes having annular representatives.  
In view of the following Lemma, let $\mathcal{B}\subset \mathcal{E}$ denote the subset of equivalence classes represented by bridges.

\begin{lemma}\label{lem:bridgesemi}
 Let $B$ be a bridge, then $T_B \rightarrow \Gamma$ is semi-embedded.
\end{lemma}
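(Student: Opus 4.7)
The plan is to argue by contradiction: suppose $B$ is a bridge but $T_B$ is not semi-embedded. Then some oriented edge of $\Gamma$ appears at least twice in $T_B$, so writing $T_B = e_1 e_2 \cdots e_k$, one can choose indices $i < j$ with $e_i = e_j$ and with $j - i$ as small as possible. The sub-trajectory $T' = e_i e_{i+1} \cdots e_{j-1}$ is then a closed trajectory in $\Gamma$ (since $e_i$ and $e_j$ have the same initial vertex), and by the minimality of $j - i$ no oriented edge is repeated in $T'$, so $T'$ is semi-embedded.

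Let $h$ be the horizontal path realising $T_B$, and let $h' = J_i J_{i+1} \cdots J_{j-1}$ be the corresponding sub-horizontal-path of $h$. Since parallelism is preserved, $h'$ is one long side of a sub-band $B'$ of $B$ whose trajectory is $T'$, and $h'$ starts and ends on a common vertical $\gamma$. The next goal is to show that $T'$ is the trajectory of a \emph{maximal annular} band $A \in \mathcal{A}$. Closedness and semi-embeddedness of $T'$ already hold, so it remains to verify that the first and last edges $e_i$ and $e_{j-1}$ lie in distinct buffered ideal triangles. If they lay in the same triangle, then the jumps $J_{j-1}$ and $J_j = J_i$ (regarded as consecutive in the cyclic trajectory $T'$) would be in the same buffered ideal triangle; combined with the no-backtrack condition and the minimality of $j - i$, I would argue that one could replace a sub-arc of $h'$ passing through that triangle by a shorter arc inside the triangle, thereby producing a closed sub-trajectory of strictly smaller length—contradicting the minimality of $T'$. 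Hence the first and last edges of $T'$ do indeed lie in distinct buffered ideal triangles, so by Lemma~\ref{lem:trajectory} and the definition of annular band, $T'$ determines a unique maximal annular band $A$.

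The sub-band $B'$ factors through $A$, so the image of $B'$ in $\Sigma$ is contained in the image of $A$. Since a band has positive thickness, the image of $B'$ has nonempty $2$-dimensional interior, and this interior lies in the interior of $A$ in $\Sigma$. But $B \subset \Sigma - \bigcup_{A \in \mathcal{A}} \text{Int}(A)$ by definition of a bridge, so the image of $B' \subset B$ cannot meet $\text{Int}(A)$. This contradiction proves that $T_B$ must be semi-embedded.

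The main obstacle is verifying the claim that the minimal closed sub-trajectory $T'$ has its first and last edges in distinct buffered ideal triangles. The no-backtrack condition only rules out \emph{opposite} consecutive jumps, and two consecutive jumps can lie in the same triangle without being opposite (they may cross different corner regions). The argument must therefore exploit minimality of $T'$ directly: whenever the boundary edges of $T'$ fall in the same triangle, one should be able to use the combinatorics of that triangle to exhibit a shorter closed sub-trajectory, either by a corner shortcut or by producing a new repetition closer together in $T_B$.
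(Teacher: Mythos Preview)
Your overall strategy---argue by contradiction, extract a minimal repeated-edge subpath $T' = e_i\cdots e_{j-1}$, verify it is closed and semi-embedded, and show it determines an annular band whose interior meets $B$---is exactly the paper's approach. The paper writes the offending subpath as $ewe$ with $w$ semi-embedded and not traversing $e$, and takes $T_A = ew$; this is your $T'$ in different notation.

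The gap is precisely where you locate it, but your proposed fix is the wrong one. The minimality/shortcut argument you sketch (``replace a sub-arc of $h'$ passing through that triangle by a shorter arc'') does not go through: there is no mechanism by which two jumps in the same triangle can be shortcut to produce a strictly shorter \emph{closed} sub-trajectory of $T_B$, and you yourself flag this as unresolved. The actual reason is much simpler and is what the paper uses without comment: \emph{any two consecutive jumps of a horizontal path lie in distinct buffered ideal triangles}. At a point $p$ on a vertical $\gamma\times\{\tfrac12\}$ away from the intriangle vertex, exactly two edge-spaces of the graph of spaces meet---one arm from each of the two ideal triangles sharing $\gamma$---because $\gamma\times\{\tfrac12\}$ locally separates the surface $S^\times$. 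A non-backtracking horizontal path through $p$ therefore exits into the triangle opposite the one it arrived from. Now apply this to the consecutive pair $e_{j-1}, e_j$ in $T_B$: they lie in distinct triangles, and since $e_j = e_i$, the first edge $e_i$ and last edge $e_{j-1}$ of $T'$ lie in distinct triangles. No minimality is needed for this step at all.
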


\begin{proof}
We will show that if $T_B \rightarrow \Gamma$ is not semi-embedded, then $B$ contains an annular band, contradicting that $B \subset (\Sigma- \cup_{A \in \mathcal{A}} \text{Int}(A))$.

Let $T_B \rightarrow \Gamma$ be a trajectory that is not semi-embedded. There exists a subpath  $p=e w e \subset T_B$, where $w$ is  semi-embedded and does not traverse $e$ (but $w$ could traverse $e^{-1}$). The band $A$ with  $T_A=e w$ starts and ends on the same vertical $\gamma$, has semi-embedded trajectory, and has first and last jumps lying on distinct ideal triangles. Hence, $A$ is an annular band.
\end{proof}

\begin{remark}\label{rmk:jumplengthbridge}In view of Lemma~\ref{lem:bridgesemi} and Remark~\ref{rem:semi}, the jump length of a bridge is at most $12|\chi|$. 
\end{remark}

\begin{remark}\label{rmk:lengthallbands}
By Lemma~\ref{lem:flatland}, $|J| \leq K'$ for any jump $J$. By Remarks~\ref{rmk:annularjump} and~\ref{rmk:jumplengthbridge}, jump lengths of annular bands and bridges are at most $12|\chi|$. Therefore $|B|\leq 12K'|\chi|$ for every band $B \in \mathcal{A}\cup \mathcal{B}$.
\end{remark}

\begin{figure}[h!]
\centerline{\includegraphics[scale=0.56]{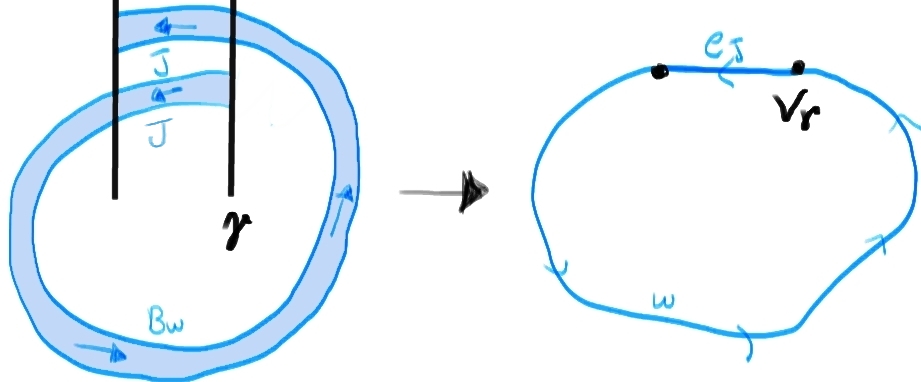}}
\caption{A band and its corresponding long trajectory.}
\label{fig:semiemb}
\end{figure}

\begin{remark}\label{rem:counts}
From Lemma~\ref{lem:numberofsemis} it follows that $\#\mathcal{E}\leq (12|\chi|+1)!$. Hence $\#\mathcal{A}+\#\mathcal{B}\leq (12|\chi|+1)!$ since  $\mathcal{A} \cap \mathcal{B}= \emptyset$.
\end{remark}

\begin{lemma}\label{lem:thickbridge} $\sum_{[B] \in \mathcal{B}} \text{thickness}(B) \ \leq \ \frac{1}{2}(|\partial \Sigma| + 2\kappa\#\mathcal{A})$.
\end{lemma}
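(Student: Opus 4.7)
The approach is to identify the sum $\sum_{[B] \in \mathcal{B}} \text{thickness}(B)$ with half the length of the vertical portion of $\partial \Sigma'$, where $\Sigma' := \Sigma - \bigcup_{A \in \mathcal{A}} \text{Int}(A)$, and then bound this vertical portion using contributions from $\partial \Sigma$ and from the boundaries of the removed annular bands.

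First I would observe that for each bridge $B \in \mathcal{B}$ the vertical subgeodesics $\iota_B$ and $\tau_B$ both have length equal to $\text{thickness}(B)$: a maximal band cannot narrow along its trajectory, since a narrowing would correspond to a proper refinement into parallelism subclasses and contradict maximality, so the width measured at the terminal vertical agrees with the initial thickness. Next, by the maximality of bridges in $\Sigma'$, each $\iota_B$ and $\tau_B$ must lie on $\partial \Sigma'$: otherwise $B$ could be extended to a wider band still inside $\Sigma'$, contradicting maximality.

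I would then establish that the collection of bridges tiles $\Sigma'$. Through any interior point $p$ of $\Sigma$ away from the intriangles there is a unique horocyclic direction, and hence a unique maximal band through $p$; when $p \notin \bigcup_{A \in \mathcal{A}} \text{Int}(A)$ this maximal band is a bridge. Consequently the $\iota_B, \tau_B$ for distinct equivalence classes $[B] \in \mathcal{B}$ are pairwise disjoint, and together they partition the vertical portion of $\partial \Sigma'$. This vertical portion decomposes as the vertical part $V = \bigcup_j \lambda_j$ of $\partial \Sigma$, of total length at most $|\partial \Sigma|$, together with the two vertical components of $\partial A$ for each $A \in \mathcal{A}$; by Lemma~\ref{cor:thickness} each such component has length at most $\text{thickness}(A) \leq \kappa$, contributing at most $2\kappa$ per annular band. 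Hence
\[
\sum_{[B] \in \mathcal{B}} \big(|\iota_B| + |\tau_B|\big) \ \leq \ |\partial \Sigma| + 2\kappa\,\#\mathcal{A},
\]
and since $|\iota_B| + |\tau_B| = 2\,\text{thickness}(B)$, dividing by two yields the desired inequality.

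The main obstacle will be verifying the tiling claim carefully: checking that the $\iota_B$ and $\tau_B$ of distinct maximal bridges are indeed disjoint on each vertical and together cover the vertical portion of $\partial \Sigma'$. This requires unwinding the combinatorial structure of jumps near intriangle vertices, where overlaps between distinct trajectories could a priori occur and must be ruled out by the maximality convention for bridges.
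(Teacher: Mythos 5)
Your argument is correct and is essentially the paper's own proof: the paper likewise writes $\partial(\Sigma-\cup_{A\in\mathcal{A}}\mathrm{Int}(A))=V\cup H$ with $V=\cup_j\iota_{B_j}\cup\cup_j\tau_{B_j}$ and bounds $|V|\leq|\partial\Sigma|+2\kappa\#\mathcal{A}$ using Lemma~\ref{cor:thickness}, then halves. The tiling/constant-thickness points you flag as needing care are exactly the steps the paper leaves implicit, so your write-up is just a more detailed version of the same route.
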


\begin{proof}
Let $V\cup H=\partial(\Sigma- \cup_{A \in \mathcal{A}} Int(A))$, where $H$ consists of all horizontal subpaths of $\partial(\Sigma- \cup_{A \in \mathcal{A}} Int(A))$ and $V$ consists of the vertical subgeodesics of $\partial(\Sigma- \cup_{A \in \mathcal{A}} Int(A))$, and $V=\cup_j \iota_{B_j}\bigcup \cup_j  \tau_{B_j}$.
Hence $\sum_j |\iota_{B_j}| + \sum_j  |\tau_{B_j}|=|V|\leq |\partial \Sigma| + 2\kappa \#\mathcal{A}$.
\end{proof}

\begin{theorem}\label{cor:cutintodisc}
Suppose $\Sigma$ has no twisted spirals. There exists a genus~$g$ diagram $\Sigma^{\flat}$ with $\partial \Sigma^{\flat}= \partial \Sigma$ and 
 a constant $k_{gn}$ such that  $\area(\Sigma^{\flat})\leq k_{gn} |\boundary S| $.
\end{theorem}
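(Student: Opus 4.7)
The plan is to construct $\Sigma^\flat$ by replacing each annular band, each bridge, and each intriangle of $\Sigma$ with a small-area filling having the same boundary, then gluing the replacements along common vertical arcs and horizontal paths. The graph of spaces decomposition exhibits $\Sigma$ as the union of the $2|\chi|$ intriangles together with the annular bands and bridges, so if each replacement preserves boundary, they assemble into a valid genus~$g$ surface diagram with $\partial\Sigma^\flat=\partial\Sigma$. Since the total number of non-intriangle pieces is bounded by $\#\mathcal{A}+\#\mathcal{B}\leq (12|\chi|+1)!$ by Remark~\ref{rem:counts}, most of the work reduces to uniformly controlling the area contributed by a single piece of each type.

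For each annular band, Lemma~\ref{cor:thickness} bounds the thickness by $\kappa$ and Remark~\ref{rmk:lengthallbands} bounds the jump length by $12K'|\chi|$, so the boundary length is bounded by a constant $C_1(g,n,\delta)$. Spirals are replaced via Corollary~\ref{cor:spiral} with area $\leq \mathcal{M}_c C_1$; snails, whose boundary components lift to uniform quasigeodesics by Lemma~\ref{lem:annular}, are replaced via Proposition~\ref{prop:BHconj} with area $\leq M C_1$. The total contribution from all annular bands is thus a constant $K_A(g,n,\delta)$. Each of the $2|\chi|$ intriangles has sides of length $\leq 7\delta$ by Lemma~\ref{lem:tri1}, so Theorem~\ref{thm:discs} fills each with area $\leq 21N\delta$, contributing a total constant $K_I(g,n,\delta)$. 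For each bridge $B$, Theorem~\ref{thm:discs} yields a disc diagram $B'$ with $\partial B'=\partial B$ and $\area(B')\leq N|\partial B|$, where $|\partial B|\leq 2\,\mathrm{thickness}(B)+24K'|\chi|$. Summing over bridges and applying Lemma~\ref{lem:thickbridge} gives
\[ \sum_{[B]\in\mathcal{B}}\area(B') \;\leq\; N\bigl(|\partial\Sigma|+2\kappa\#\mathcal{A}\bigr)+24NK'|\chi|\#\mathcal{B}. \]

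Combining the three contributions and invoking Corollary~\ref{cor:lengthbdry} ($|\partial\Sigma|\leq\mathcal{K}|\partial S|$) yields $\area(\Sigma^\flat)\leq N\mathcal{K}|\partial S|+\mathrm{const}(g,n,\delta)$; since $|\partial S|\geq n\geq 1$, the additive constant is absorbed by enlarging $k_{gn}$. The main obstacle is verifying that the piecewise replacements glue into a bona fide genus~$g$ diagram: each band's boundary must match the adjacent intriangles and neighbouring bands along the shared verticals and horizontal paths, and equivalent bands must be treated as a single piece via their common image in $\Sigma$. The hypothesis that $\Sigma$ has no twisted spirals is essential here, because Lemma~\ref{lem:quasig} and Lemma~\ref{lem:annular} both rely on the absence of Möbius strips homotopic into the boundary in order to conclude that consecutive geodesic rays represent distinct points of $\partial\widetilde X$; without this, the uniform quasigeodesic structure underlying every area estimate above would collapse.
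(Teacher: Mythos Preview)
Your argument is correct and follows the same decomposition of $\Sigma$ into annular bands, bridges, and intriangles as the paper, invoking the same key ingredients (Lemma~\ref{cor:thickness}, Corollary~\ref{cor:spiral}, Lemma~\ref{lem:thickbridge}, Remark~\ref{rmk:lengthallbands}, Corollary~\ref{cor:lengthbdry}). The only differences are cosmetic: the paper leaves snails and bridges as they sit in $\Sigma$ and replaces only spirals and intriangles, whereas you refill every piece via Proposition~\ref{prop:BHconj} or Theorem~\ref{thm:discs}; and one small inaccuracy in your closing commentary --- Lemma~\ref{lem:quasig} does \emph{not} depend on the no-twisted-spiral hypothesis (see the remark immediately following it), only Lemma~\ref{lem:annular} does.
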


Of course, there are no twisted spirals when $\Sigma$ is orientable. Theorem~\ref{cor:cutintodisc} in this generality to facilitate the proof of the main theorem in Section~\ref{sec:nono}.

\begin{proof}
Consider the annular bands, intriangles, and bridges that constitute $\Sigma$:

\begin{enumerate}

\item Let $\mathcal{A}=\{A_1, \ldots, A_{k-1}, A_k, \ldots, A_{\#\mathcal{A}}\}$, where $A_1, \ldots, A_{k-1}$ are snails and $A_k, \ldots, A_{\#\mathcal{A}}$ are spirals.
Then by Remark~\ref{rmk:lengthallbands} and Lemma~\ref{cor:thickness}, $\area(A_i)\leq 12|\chi|\kappa K'$ for every snail $A_i$.
By Corollary~\ref{cor:spiral} there are annular diagrams $A'_k, \ldots, A'_{\#\mathcal{A}}$ satisfying $\area(A'_i)\leq \mathcal{M}_c|\partial A'_i|$ and $\partial A_i=\partial A'_i$ for $i \in \{k, \ldots, \#\mathcal{A}\}$. Hence $\sum^{k-1}_{i=1} \area(A_i)+ \sum^{\#\mathcal{A}}_{i=k} \area(A'_i)\leq \kappa\#\mathcal{A}(12|\chi|K'+\mathcal{M}_c\sum_i|\partial A'_i|)$.

\item The side length of each intriangle $\Lambda$ is $\leq K'$ by Lemma~\ref{lem:flatland}. Thus Theorem~\ref{thm:discs} provides a constant $\mathcal{M}_\triangle$ and a disc diagram $\Lambda'$ with  $\partial \Lambda=\partial \Lambda'$. As there are $2|\chi|$ intriangles, $\sum_{\Lambda} \area( \Lambda')\leq K'\mathcal{M}_\triangle 2|\chi|$.

\item  Each bridge has length at most $12|\chi|K'$ by Remark~\ref{rmk:jumplengthbridge}.  
By Remark~\ref{rem:counts} and Lemma~\ref{lem:bridgesemi}, the sum of the thicknesses of the bridges is bounded above by $\frac{1}{2}(|\partial \Sigma| + 2\kappa\#\mathcal{A})$ by Lemma~\ref{lem:thickbridge}. 
Hence $\sum_{B \in \mathcal{B}}\area(B) \leq \frac{12}{2}K'|\chi|  \#\mathcal{B}(|\partial \Sigma| + 2\kappa\#\mathcal{A}) $.

\end{enumerate}
$\Sigma^{\flat}$  is obtained from $\Sigma$ by replacing each spiral $A$ with $A'$ and replacing each intriangle $\Lambda$ by $\Lambda'$.
 Then  $\partial \Sigma^{\flat}=\partial \Sigma$, which suffices by Remark~\ref{rmk:restrict}.  
 Finally:  
 \[\area(\Sigma^{\flat})\leq  k_{gn} |\boundary S| \] where
 \[k_{gn} \ = \ \#\mathcal{A}(12\kappa K'|\chi|+\mathcal{M}_c|\partial A'_i|) +  2K'\mathcal{M}_\triangle|\chi| +6K'\#\mathcal{B}|\chi| (1 + 2\kappa\#\mathcal{A}). \qedhere \]
\end{proof}

\section{A simple proof in a special case}\label{sec:special}

This section
proves a linear isoperimetric function
for 2-complexes that satisfy the strict weight test
 \cite{PrideHyperbolicComplexes88,GerstenReducible87}.
This was first explained for disc diagrams by Gersten.
We recall the  Combinatorial Gauss Bonnet Theorem and its associated formulas, and refer to Gersten and Pride as above for proofs,
 or to \cite{McCammondWiseFanLadder}  for the slight generalisation we use.
 
The curvatures of vertices and 2-cells are defined as follows:

$\curvature(x)=2\pi-\sum_{c\in  \text{Corners}(x)} \sphericalangle(c)-\pi\chi(\link(x))$

$\curvature(f)=2\pi-\sum_{c\in \text{Corners}(f)} \deficiency(c)$, where $\deficiency(c)=\pi -\sphericalangle(c)$
\begin{theorem} 
[Combinatorial Gauss-Bonnet] 
\label{thm:CGB}
Let $Y$ be a compact 2-complex with an angle assigned at each corner of each 2-cell, then $$2\pi\euler(Y) = \sum_{v \in  \text{Vertices}(Y)} \curvature(v)+ \sum_{f \in \text{$2$-cells}(Y)} \curvature(f)$$

\end{theorem}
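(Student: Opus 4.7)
The plan is to verify the identity by direct computation: expand the right-hand side using the definitions of $\curvature(v)$ and $\curvature(f)$, cancel the corner-angle terms against each other, and then recognize the leftover combinatorial count as $2\pi(V - E + F)$.

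First I would write out
\[
\sum_v \curvature(v) + \sum_f \curvature(f) \;=\; 2\pi V + 2\pi F \;-\; \sum_v \!\!\sum_{c\in \text{Corners}(v)}\!\!\sphericalangle(c) \;-\; \pi\sum_v \chi(\link(v)) \;-\; \sum_f \!\!\sum_{c\in \text{Corners}(f)}\!\!\bigl(\pi-\sphericalangle(c)\bigr),
\]
where $V$ and $F$ are the numbers of vertices and $2$-cells. The key observation is that each corner $c$ of $Y$ belongs to exactly one vertex and exactly one $2$-cell, so both iterated sums of $\sphericalangle(c)$ equal $\sum_c \sphericalangle(c)$ (summed over all corners). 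The angle terms therefore cancel pairwise, leaving
\[
2\pi V + 2\pi F - \pi\sum_v \chi(\link(v)) - \pi C,
\]
where $C$ is the total number of corners of $Y$.

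Next I would unpack $\chi(\link(v))$ combinatorially. The link $\link(v)$ is the graph whose vertex set is the set of half-edges at $v$ and whose edge set is the set of corners of $2$-cells at $v$; hence $\chi(\link(v)) = \#\{\text{half-edges at } v\} - \#\{\text{corners at } v\}$. Summing over $v$, each edge of $Y$ contributes two half-edges and each corner is counted in exactly one link, so $\sum_v \chi(\link(v)) = 2E - C$. Substituting back,
\[
2\pi V + 2\pi F - \pi(2E - C) - \pi C \;=\; 2\pi(V - E + F) \;=\; 2\pi\,\euler(Y),
\]
which is the claimed identity.

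The only place that requires care is the link bookkeeping: one has to be confident that half-edges at $v$ are vertices of $\link(v)$ and corners at $v$ are edges of $\link(v)$, and that each corner of $Y$ is accounted for in exactly one such link. Once these counting conventions are pinned down, the proof is a purely mechanical algebraic cancellation, with the angle contributions disappearing and the deficiency contributions conspiring with the link Euler characteristics to produce precisely $2\pi\euler(Y)$. I would not expect any genuine obstacle beyond ensuring the definitions of corners, half-edges, and links are used consistently; this is presumably why the result is cited rather than reproved in detail in the paper.
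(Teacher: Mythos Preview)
Your argument is correct: the angle terms cancel because every corner belongs to exactly one vertex and one 2-cell, and your link bookkeeping $\sum_v \chi(\link(v)) = 2E - C$ is exactly right, yielding $2\pi(V-E+F)$ as claimed.

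As for comparison, the paper does not actually prove Theorem~\ref{thm:CGB}. It merely recalls the statement and the curvature formulas, and refers the reader to Pride, Gersten, and McCammond--Wise for proofs. Your direct double-counting computation is precisely the standard argument found in those references, so there is nothing to contrast: you have supplied the proof the paper chose to outsource. The only caveat worth flagging is that the definition of $\link(v)$ you use (half-edges as vertices, corners as edges) must match the paper's conventions for the formula $\curvature(x)=2\pi-\sum\sphericalangle(c)-\pi\chi(\link(x))$ to hold; since the paper imports its definitions from the same sources, this is consistent.
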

We have in mind the case where $Y$ is a (possibly singular) surface.

\begin{defn}\label{def:coarse}
An \emph{angled 2-complex} $X$ is a 2-complex with an angle $\sphericalangle(c) \in \reals$ assigned to each corner $c$ of each 2-cell. (Equivalently, an angle is assigned to each edge in the link of each 0-cell.)

A map $Y\rightarrow X$ between 2-complexes is a \emph{near-immersion} if it is  a local-injection outside $Y^0$.
The angles of $X$ are pulled back to $Y$,
so  $Y$ is itself an angled 2-complex.
\end{defn}

\begin{definition}[Strict weight test]Let $\sigma \rightarrow \Lambda$ be a combinatorial path in a graph having an angle $\sphericalangle(c)$ for each edge $c$. Define $|\sigma|_\sphericalangle = \sum_{i=1}^n \sphericalangle(c_i)$ where $\sigma=c_1 \cdots c_n $ is a concatenation of edges.

The angled 2-complex $X$ satisfies the \emph{strict weight test} if the following hold:
\begin{enumerate}
\item $\kappa(f) < 0$ for each 2-cell $f$ and 
\item $|\sigma|_\sphericalangle> 2\pi$ for each essential closed combinatorial path $\sigma \rightarrow \link(x)$
\end{enumerate}
\end{definition}

\begin{lemma}\label{lem:exists}
Let $X$ be a compact angled 2-complex satisfying the strict weight test.
 There are constants $\nu, \xi$ such that the following holds.  Let $S \rightarrow X$ be a near-immersion of a surface diagram, and pullback the angles of $X$ to $S$. Then
 \begin{enumerate}
\item $\kappa(x) < \nu$ for any vertex $x$ in the interior of $S$
\item $\kappa(x)<\xi$ for any vertex in $\partial S$. 
\end{enumerate} 
\end{lemma}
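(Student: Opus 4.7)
The plan is to transfer the strict weight test from $X$ to $S$ using the fact that a near-immersion induces immersions on vertex links. The interior vertex bound then falls out directly from the strict weight test, while the boundary vertex bound follows from compactness of $X$ via the curvature formula.

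First I would verify that for every vertex $x\in S$ the induced map $\link(x)\to \link(\bar x)$ is an immersion (i.e., locally injective). If local injectivity failed at the link-vertex corresponding to an edge $e$ at $x$, two distinct corners $c_1,c_2$ of $2$-cells $f_1,f_2$ of $S$ meeting along $e$ would map to a common edge in $\link(\bar x)$. Because $f_1$ and $f_2$ occupy opposite local sheets at $\mathrm{int}(e)$ in the surface $S$, this identification would force both sheets to map to a single side of $\bar e$ inside a common $2$-cell $\bar f$ of $X$, contradicting local injectivity of the near-immersion on $\mathrm{int}(e)$. Since the angles of $S$ are pulled back from $X$, angle sums along paths in $\link(x)$ coincide with those of their images in $\link(\bar x)$.

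For an interior vertex $x$, the link is a disjoint union of simple cycles $\sigma_1,\dots,\sigma_k$. Each $\sigma_j$ is a reduced closed combinatorial loop, so by the previous step its image in $\link(\bar x)$ is reduced and nontrivial, and hence essential (a reduced nontrivial loop in a graph is never null-homotopic). Using $\chi(\link(x))=0$ together with the strict weight test:
\[
\kappa(x)\;=\;2\pi-\sum_{j=1}^{k} |\sigma_j|_\sphericalangle\;<\;2\pi(1-k)\;\leq\;0.
\]
To upgrade this to a uniform constant $\nu<0$, I would use compactness of $X$ to reduce to the finitely many (finite) link graphs $\link(\bar x)$, each with finitely many angle values, and argue that the excess $|\sigma|_\sphericalangle-2\pi$ over reduced essential loops is bounded below by a uniform $\epsilon>0$. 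Setting $\nu=-\epsilon$ finishes the interior case.

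For a boundary vertex $x$, the link is a disjoint union of $k\geq 1$ arcs, so $\chi(\link(x))=k$ and $\kappa(x)=2\pi-\sum_c \sphericalangle(c)-k\pi$. Compactness of $X$ bounds the corner angles from below, yielding an upper bound $\xi$ on $\kappa(x)$ directly from the formula; when the angles of $X$ are nonnegative we may take $\xi=\pi$. The main obstacle I anticipate is the uniform gap in the interior case: the strict inequality $|\sigma|_\sphericalangle>2\pi$ must be promoted to a positive uniform excess, which requires ruling out sequences of reduced essential loops with $|\sigma|_\sphericalangle$ accumulating at $2\pi$. Under standard positivity of the angles this follows because long loops accumulate angle, limiting the critical candidates to finitely many bounded-length loops, each with a fixed positive excess.
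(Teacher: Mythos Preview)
You correctly identify the obstacle, but you do not close it, and the paper's setting genuinely requires closing it: Definition~\ref{def:coarse} allows $\sphericalangle(c)\in\reals$, so negative angles are permitted and your positivity assumption is not available. Your boundary argument is in fact incorrect as written, not merely incomplete: compactness bounds each individual corner angle from below, but the link of a boundary vertex maps to an immersed path in $\link(\bar x)$ of \emph{unbounded} combinatorial length, so with some negative angles present $\sum_c\sphericalangle(c)$ can be arbitrarily negative and $\kappa(x)$ arbitrarily large. The same issue undermines the uniform gap you seek for interior vertices: there are infinitely many reduced essential loops in $\link(\bar x)$, and absent positivity nothing prevents their angle sums from accumulating at~$2\pi$.

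The missing idea, which the paper supplies, is a semi-embedded decomposition. Any immersed cycle in a finite graph factors as a concatenation of immersed \emph{semi-embedded} cycles (each directed edge traversed at most once), and any immersed arc factors as $\lambda_1\sigma_1\cdots\sigma_m\lambda_2$ with the $\sigma_j$ semi-embedded cycles and the $\lambda_i$ semi-embedded arcs. A finite graph has only finitely many semi-embedded paths and cycles. Each semi-embedded immersed cycle is essential, so by the strict weight test its angle sum exceeds $2\pi$; taking the minimum over this finite set gives the uniform $\nu_0>2\pi$ you wanted, and hence $\kappa(x)\leq 2\pi-\nu_0<0$ for every interior vertex. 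For boundary vertices, the cycle pieces $\sigma_j$ contribute positively and only the two tail arcs $\lambda_1,\lambda_2$ might carry negative total angle; since these range over a finite set their contribution is uniformly bounded below, yielding the constant $\xi$. This argument uses no sign hypothesis on the angles and is exactly what replaces your appeal to positivity.
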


\begin{proof}
For a non-singular vertex $x$ in $S$, the $\kappa(x)$ is computed directly from the path or cycle $\sigma$ corresponding to $\link(x)$. 
Namely, $\kappa(x)=2\pi-|\sigma|_\sphericalangle$ or $\kappa(x)=\pi-|\sigma|_\sphericalangle$ depending on whether $x$ is internal or not.
Note that when we pullback the angles at corners of $X$ to the corners of $S$, this assigns angles to edges of $\link(x)$ which are then associated to the angles of edges of $\sigma$. While $|\sigma|_\sphericalangle$ is more than $2\pi$ or more than $0$ when all angles are strictly positive, a little more effort is required when allowing arbitrary angles.

Recall that a path is \emph{semi-embedded} if it traverses each edge at most once in each direction,
and that there are finitely many semi-embedded paths in a finite graph. 

Any cycle $\sigma$ can be decomposed into semi-embedded cycles $\sigma_1 \cdots \sigma_n$. Since $|\sigma|_\sphericalangle=\sum |\sigma_i|_\sphericalangle$, we have that $\kappa(x)< 2\pi-\nu_0$ when $x$ is internal, and $\nu_0$ is the minimum $|\mu|_\sphericalangle$ where $\mu$ is a semi-embedded closed cycle.

Any immersed path can be decomposed as either $\lambda_1$ or $\lambda_1\sigma_1 \cdots \sigma_n \lambda_2$ where each $\lambda_i, \sigma_j$ are semi-embedded and each $\sigma_i$ is an immersed cycle. Therefore $\kappa(x)=\pi-|\sigma|_\sphericalangle < \pi- \xi_0$ when $x$ is a vertex on $\partial S$, and $\xi_0$ is twice the minimum $|\lambda_i|_\sphericalangle$.

The singular case is similar.
\end{proof}

For a genus~$g$ diagram $S$ with boundary circles $\{C_i\}$
we let $|\boundary S| = \sum|C_i|$.

\begin{proposition}\label{lem:b-c-npc} Let $X$ be a compact angled 2-complex with negative curvature.
There exists $K\geq0$ with the following property:

Let $S$ be a
 surface 
 diagram. 
Then
$\area(S)\leq K|\boundary S|$
for any near-immersion.
\end{proposition}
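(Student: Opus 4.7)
The plan is to apply the Combinatorial Gauss-Bonnet formula (Theorem~\ref{thm:CGB}) to $S$ with the angles pulled back from $X$, and then control each of the three types of curvature contribution using compactness of $X$ together with Lemma~\ref{lem:exists}.

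First I would establish a uniform negative upper bound on $2$-cell curvatures. Because $X$ is compact there are finitely many isomorphism types of $2$-cells, and each has $\kappa(f)<0$ by hypothesis. Since a near-immersion is locally injective outside $Y^0$ the $2$-cells of $S$ inherit the same corner angles, so there exists $\epsilon>0$ with $\kappa(f)\leq -\epsilon$ for every $2$-cell $f$ of $S$. For vertices, I would invoke Lemma~\ref{lem:exists} to get $\kappa(x)<\nu$ on interior vertices and $\kappa(x)<\xi$ on boundary vertices. Crucially, one can take $\nu<0$: the proof of Lemma~\ref{lem:exists} gives $\nu=2\pi-\nu_0$, and the strict weight test forces $\nu_0>2\pi$ (since every semi-embedded closed cycle in a link has $\sphericalangle$-weight exceeding $2\pi$).

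Splitting the vertex sum according to whether a vertex lies in the interior or on $\partial S$, Gauss-Bonnet then yields
\begin{equation*}
2\pi \chi(S) \ = \ \sum_{v \in \interior(S)} \kappa(v) \ + \ \sum_{v \in \partial S} \kappa(v) \ + \ \sum_{f} \kappa(f) \ < \ \nu V_{\text{int}} + \xi V_{\partial} - \epsilon\cdot\area(S),
\end{equation*}
where $V_{\text{int}}$ and $V_{\partial}$ count interior and boundary vertices of $S$. Since $\nu<0$, the term $\nu V_{\text{int}}$ is non-positive and may be discarded, and clearly $V_{\partial}\leq |\partial S|$, so rearranging gives
\begin{equation*}
\epsilon\cdot \area(S) \ \leq \ -2\pi\chi(S) + \xi\,|\partial S|.
\end{equation*}

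To finish, I would use $-\chi(S)=2g+n-2$ together with $n\leq |\partial S|$, which gives $-\chi(S)\leq 2g+|\partial S|$. Substituting and absorbing the additive $2g$ constant into the coefficient of $|\partial S|$ (using that $|\partial S|\geq 1$ since $n\geq 1$) produces the desired linear bound $\area(S)\leq K|\partial S|$, with $K=K(g,\epsilon,\xi)$. The main obstacle is the strict negativity of $\nu$: Lemma~\ref{lem:exists} merely records $\kappa(x)<\nu$ as a finite constant, and one must re-examine its proof to extract that the strict weight test actually forces $\nu<0$. Once this is in hand, the remainder is the routine Gauss-Bonnet bookkeeping outlined above, and the argument is insensitive to orientability, since Theorem~\ref{thm:CGB} applies to any compact $2$-complex (singular surfaces included).
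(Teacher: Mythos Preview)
Your proposal is correct and follows essentially the same approach as the paper: apply Combinatorial Gauss--Bonnet, discard the (non-positive) interior-vertex curvature, bound boundary-vertex and $2$-cell curvature uniformly by compactness, and then absorb $-2\pi\chi(S)=2\pi(2g+n-2)$ using $n\leq|\partial S|$ to obtain a linear bound with $K$ depending on $g$. Your write-up is in fact slightly more explicit than the paper's at the one delicate point---namely that the strict weight test forces the interior-vertex bound $\nu$ to be negative, which is what justifies dropping those terms.
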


\begin{proof}

Let $g$ be the genus of $S$ and $n$ be the number of boundary components.

By Theorem~\ref{thm:CGB} we have:
$$2\pi(2-2g-n)=2\pi \euler(S) \ = \ \sum \curvature(v) + \sum \curvature(f)$$
so
$$ 2\pi(2-2g-n) \ \leq  \ \nu |\boundary S|-\xi \area(S)$$
where $\nu$ is an upperbound on positive curvature at a boundary vertex
and $-\xi <0$ is an upperbound on the negative curvature of a 2-cell (i.e., $|\xi|$ is the maximum in absolute value). Since $X$ is compact, such $\nu$ and $\xi$ always exist by Lemma~\ref{lem:exists}.  
Hence
$$\area(S)
\ \leq \
\frac{\nu}{\xi} |\boundary S|+ 2\pi(2g+n-2) 
\ \leq \
 |\boundary S|(\frac{\nu}{\xi}+4\pi g +2\pi)$$
as $n \leq |\boundary S|$, since each boundary circle of $S$ has at least one edge.
\end{proof}

\section{Nonorientable surface diagrams}\label{sec:nono}

In this section we explain how to generalise the proof of Theorem~\ref{thm:main} to non-orientable surface diagrams.
 As in the orientable case, the surface diagram decomposes into a union of spirals, snails, and bridges, but because of nonorientability some spirals may be twisted. We need to re-quantify the spiral bounds to generalise the proof.

\begin{proof}[(Proof of Theorem~\ref{thm:main} in the nonorientable case)]
Cases~\eqref{item:1} through~\eqref{item:5} of the proof of Theorem~\ref{thm:main} hold without any modification. The degenerate case of a Mobius strip is handled below in Proposition~\ref{prop:mobiuslinear}. To handle the non-degenerate cases, we assume, as in the orientable case, that every boundary circle of $S$ maps to a conjugacy class of an infinite-order element of $\pi_1 X$, and that $S$ is homeomorphic to a surface, so that its interior has an ideal triangulation. Now we replace our original diagram by a new infinite nonorientable surface diagram $S^\delta$, which in turn we replace with a buffered and trimmed nonorientable surface diagram $\Sigma$ (see Construction~\ref{cnst:buffer} and Construction~\ref{ct:trim}).

 While the constants change, all of the technical results in the previous sections hold for $\Sigma$. 
The only result that seems to utilise orientability is Lemma~\ref{lem:annular} -- but it actually uses the no-twisted-cylinders. We now explain how to navigate around it.  

For each twisted spiral $B_i$ in $\Sigma$, let $\mathbf{M}_i \subset B_i$ be the twisted cylinder of Lemma~\ref{lem:twisted}. Let $\Sigma_o=\Sigma - \cup_i \text{Int}(\mathbf{M}_i)$. Since twisted spirals are semi-embedded, $|\partial \mathbf M_i|\leq 12|\chi|K'$ by Remark~\ref{rmk:lengthallbands}. Thus:
 
$$ |\partial \Sigma_o| \ = \ |\partial \Sigma|+\sum_i|h_i| \ \leq \ |\partial \Sigma| +12K'|\chi|\#\mathcal{A}.$$

Since $\Sigma_o$ has no twisted spirals, all annular bands in $\Sigma_o$ are orientable. Theorem~\ref{cor:cutintodisc} provides a  diagram $\Sigma'_o$ with $\partial \Sigma'_o= \partial \Sigma_o$ and $\area (\Sigma'_o) \leq k_{g'n'} |\partial \Sigma_o|$ where $k_{g'n'}$ is a constant depending on the genus and number of boundary circles of $\Sigma'_o$. 

Applying Corollary~\ref{prop:mobiuslinear} to each $\mathbf{M}_i$ yields a twisted cylinder $\mathbf{M}'_i$ with $\partial \mathbf{M}_i=\partial \mathbf{M}'_i$ and $\area(\mathbf{M}'_i)\leq \mathbf{K}|\partial \mathbf{M}_i|$. A surface diagram $\Sigma'$ with the same homotopy type and with $\partial \Sigma'=\partial \Sigma$ is obtained by gluing the twisted cylinders along their boundaries:
\[\Sigma' \ = \ \Sigma'_o \bigcup_{\cup_i \partial\mathbf{M}'_i} \cup_i \mathbf{M}'_i \]
Finally 
\[ \area(\Sigma') \ = \ \area(\Sigma'_o) + \sum_i \area(\mathbf{M}'_i) \ \leq \ k_{g'n'} |\partial \Sigma_o| + \mathbf{K}|\partial \mathbf{M}|\#\mathcal{A}. \qedhere \]
\end{proof}

The linear isoperimetric inequality for Möbius diagrams follows readily from Proposition~\ref{prop:BHconj}:

\begin{corollary}\label{prop:mobiuslinear}
Let $X$ be a 2-complex with $\pi_1X$ $\delta$-hyperbolic. There is a constant $\mathbf{K}=\mathbf{K}(\delta)$ such that if an essential loop $\sigma \rightarrow X$ bounds a Möbius diagram $\mathbf{M} \rightarrow X$, then there exists a Möbius diagram $\mathbf{M}' \rightarrow X$ with $\partial \mathbf{M}=\partial \mathbf{M}'$ and $\area(\mathbf{M}')\leq \mathbf{K}|\partial \mathbf{M}|$.
\end{corollary}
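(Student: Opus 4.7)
The plan is to use the algebraic consequence of $\sigma$ bounding a M\"obius diagram $\mathbf{M}$: the core of $\mathbf{M}$ represents a conjugacy class $[w]\in\pi_1X$ with $[\sigma]=[w^2]$. I would first produce a short closed combinatorial representative $c_w\to X^1$ of $[w]$, then apply Proposition~\ref{prop:BHconj} to the freely homotopic loops $\sigma$ and $c_w\cdot c_w$, and finally collapse the resulting annular diagram along one of its boundaries to obtain the desired M\"obius diagram.

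Concretely, since $\pi_1X$ is torsion-free $\delta$-hyperbolic and $X$ is compact, the comparison of cyclic word length with translation length in hyperbolic groups yields a constant $K_0=K_0(\delta,X)$ and a closed combinatorial path $c_w\to X^1$ representing $[w]$ with $|c_w|\leq \tfrac12|\sigma|+K_0$, using that $\tau(w)=\tfrac12\tau(\sigma)\leq \tfrac12|\sigma|$. Since $c_w\cdot c_w$ is essential and freely homotopic to $\sigma$, Proposition~\ref{prop:BHconj} gives an annular diagram $A\to X$ with $\boundary A=\sigma\cup(c_w\cdot c_w)$ and $\area(A)\leq M(|\sigma|+2K_0)$. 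Now on the $(c_w\cdot c_w)$-boundary of $A$, identify the $i$-th vertex and edge of the first copy of $c_w$ with the $i$-th vertex and edge of the second. Because both copies map to the same combinatorial path $c_w$ in $X$, these identifications are compatible with $A\to X$ and descend to a combinatorial map on the quotient $\mathbf{M}'$. Topologically, collapsing one boundary circle of an annulus by antipodal identification yields the mapping cylinder of the degree-$2$ cover $S^1\to S^1$, which is a M\"obius band whose core is the collapsed circle. Thus $\mathbf{M}'$ is a M\"obius diagram with $\boundary\mathbf{M}'=\sigma$ and $\area(\mathbf{M}')=\area(A)\leq \mathbf{K}|\sigma|$, absorbing the additive constant into a linear bound since $|\sigma|\geq 1$.

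The main obstacle is the short-square-root bound $|c_w|\leq\tfrac12|\sigma|+K_0$, which is where torsion-freeness and $\delta$-hyperbolicity enter; once this is in hand, the rest is a direct invocation of Proposition~\ref{prop:BHconj} together with the elementary topological quotient described above. Note that the construction makes no palindromic demand on the collapsed circle, precisely because we are identifying two adjacent copies of the \emph{same} combinatorial path, rather than performing a cut-and-reglue.
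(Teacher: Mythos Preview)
Your proposal is correct and follows essentially the same route as the paper: take a short closed representative of the core of $\mathbf{M}$, apply Proposition~\ref{prop:BHconj} to the pair $(\sigma,\,c_w\cdot c_w)$, and then quotient the resulting annulus along the $c_w\cdot c_w$ boundary by the $\integers_2$-action to obtain the M\"obius diagram. The only remark is that the statement does not assume $\pi_1X$ torsion-free, and your translation-length argument for $|c_w|\leq\tfrac12|\sigma|+K_0$ as written only handles infinite-order $w$; the torsion case is harmless (finitely many torsion conjugacy classes give a uniform bound on $|c_w|$), and the paper is equally terse here, simply asserting $|\gamma^2|\leq R|\sigma|$ for a uniform $R$.
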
 

\begin{proof}
Let $\gamma \rightarrow X$ be a minimal length closed combinatorial path homotopic to a generator of $\pi_1 \mathbf{M}$, so  $\gamma^2$ is homotopic to $\sigma$. Note that $|\gamma^2|\leq R|\sigma|$ for some uniform $R$. By Proposition~\ref{prop:BHconj}, there is an annular diagram $C \rightarrow X$ with $\partial C= \gamma \sqcup \sigma$ and $\area(C)\leq \mathbf{M}|\partial C|= M R |\sigma|$ for some uniform  $M$. Hence $\area(C)\leq 2MR|\sigma|$. Let $\mathbf{M}'$ be the quotient of $C$ obtained by identifying $\gamma^2$ with itself by the $\integers ^2$-action.  
\end{proof}

We require the following Lemma:

\begin{lemma}\label{lem:twisted}
Every twisted spiral in $\Sigma$ contains a twisted cylinder whose boundary is a closed horizontal path.
\end{lemma}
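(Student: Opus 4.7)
The plan is to analyze the internal structure of $B$ via its decomposition into parallel slices. Let $B$ be a twisted spiral with trajectory $T_B$, bounding horizontal paths $h,h'$, and $\iota_B,\tau_B\subset \gamma$ with $\iota_B\cap \tau_B\neq\emptyset$. The rectangular diagram $B$ decomposes as a union of parallel horizontal paths (``slices''), each having trajectory $T_B$ by Lemma~\ref{lem:trajectory}, indexed by the vertices of $\iota_B$. Each slice begins at a vertex $x\in \iota_B$ and terminates at a vertex $\phi(x)\in \tau_B$, yielding a bijection $\phi$ between the vertex sets of $\iota_B$ and $\tau_B$. The first point to nail down is that $B$ is twisted if and only if $\phi$ reverses the natural order on $\gamma$: an order-preserving $\phi$ identifies $\iota_B$ and $\tau_B$ in $\Sigma$ cyclically, giving an ordinary annular image, whereas an order-reversing $\phi$ gives the M\"obius-type identification that defines being ``twisted.''

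Having established this equivalence, one applies a discrete intermediate-value argument to the order-reversing bijection $\phi$ between two overlapping finite intervals of vertices on $\gamma$. The result is that one of two ``fixed configurations'' must occur: either a vertex $m\in \iota_B\cap \tau_B$ with $\phi(m)=m$, or a pair of adjacent vertices $m_1,m_2$ with $\phi(m_1)=m_2$ and $\phi(m_2)=m_1$. In the second case the concatenation of the slices through $m_1$ and $m_2$, traversed in the correct order, is immediately a closed horizontal path; in the first case the slice through $m$ is itself closed.

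To produce the desired twisted cylinder, take $\mathbf M\subset B$ to be the sub-band consisting of the slices through a symmetric interval $I\subset \iota_B$ about the fixed configuration --- of thickness $1$ in the adjacent-pair case and thickness $2$ in the fixed-vertex case. By symmetry of $\phi$ about the fixed configuration one has $\phi(I)=I$, so $\iota_{\mathbf M}=\tau_{\mathbf M}$, and $\mathbf M$ inherits the orientation-reversal of $\phi$ from $B$, so $\mathbf M$ is a twisted cylinder. Its image in $\Sigma$ is a M\"obius strip whose single boundary circle is the concatenation of the two outermost slices of $\mathbf M$ --- which is exactly a closed horizontal path, as required.

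The main obstacle is establishing rigorously that ``$B$ twisted'' corresponds to ``$\phi$ reverses order on $\gamma$'': this requires tracking how the $2$-cells along $\iota_B$ and $\tau_B$ are glued to each other in $\Sigma$ and distinguishing the two possible gluings (cylindrical versus M\"obius). Once this structural dictionary is in place, the rest is routine: the intermediate-value step is combinatorial, and verifying that $\mathbf M$ is a cylinder with closed-horizontal-path boundary is a direct consequence of the symmetry of the construction.
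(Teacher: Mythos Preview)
Your argument is correct but more elaborate than the paper's. The paper bypasses the fixed-point search entirely: writing $\iota_{_B} \cup \tau_{_B} = uvw$ with $v = \iota_{_B} \cap \tau_{_B}$, constancy of band thickness gives $|u| = |w|$, and the orientation-reversal (your ``$\phi$ order-reversing'') forces the sub-band starting at $u$ to terminate exactly at $w$; deleting this untwisted outer sub-band leaves the sub-band over the overlap $v$, which is automatically a twisted cylinder since $\phi$ restricts to an order-reversing involution of $v$. Your intermediate-value step instead locates the centre of symmetry of $\phi$ and builds a \emph{minimal} twisted cylinder of thickness~$1$ or~$2$ around it. The two constructions are nested --- the paper produces the \emph{maximal} twisted sub-cylinder while you produce a minimal one --- and either suffices for the application in Section~\ref{sec:nono}, where only the bound on $|\partial\mathbf{M}|$ coming from semi-embeddedness of the trajectory is used. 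The point you flag as the ``main obstacle'' (twisted $\Leftrightarrow$ $\phi$ order-reversing) is indeed the crux in both proofs; the paper handles it by appeal to a figure rather than a formal verification, so your extra care there is not wasted --- but once that equivalence is in hand, the paper's three-line argument is available and your discrete intermediate-value step becomes unnecessary.
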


\begin{proof}
Let $B$ be a twisted spiral. If $\iota_{_B} = \tau_{_B}$ then  $B$ is the desired twisted cylinder. Suppose $\iota_{_B} \neq \tau_{_B}$.  Let $\iota_{_B} \cup \tau_{_B}= u v w$ as in Figure~\ref{fig:twist}. As the thickness of the band is constant, $|u|=|w|$.  Consequently, the band starting at $u$ must terminate at $w$, removing this band yields the desired twisted cylinder.
\end{proof}

\begin{figure}%[h!]
\centerline{\includegraphics[scale=.7]{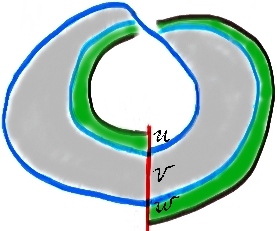}}
\caption{A twisted cylinder inside a twisted spiral.}
\label{fig:twist}
\end{figure}

{\bf Acknowledgement: } We are grateful to Piotr Przytycki for helpful comments.

%\begin{thebibliography}{}                                                                                                

%\end{thebibliography}
\bibliographystyle{alpha}
%\bibliographystyle{plain}
%\bibliography{../wise}
\bibliography{linearisopbob}

\end{document}